\documentclass[11pt, reqno]{amsart}   	
		
\usepackage{accents}	
\usepackage{adjustbox}
\usepackage{amssymb, latexsym}
\usepackage{amsmath}
\usepackage[toc,page]{appendix}
\usepackage{braket}
\usepackage{breqn}
\usepackage{caption}
\usepackage{color}
\usepackage{comment}
\usepackage{dsfont}
\usepackage{esint}
\usepackage{enumitem}
\usepackage[T1]{fontenc}
\usepackage{geometry}         
\usepackage{graphicx}
\usepackage{lipsum}
\usepackage{hyperref}
\usepackage{latexsym}
\usepackage{mathrsfs}
\usepackage{mathtools}
\usepackage{subcaption}
\usepackage{float}
\restylefloat{table}
\usepackage{listings}
\usepackage{wasysym}
\usepackage[dvipsnames]{xcolor} 

\usepackage{times}
\usepackage{url} 
\usepackage{tikz}
\usetikzlibrary{cd}
\usetikzlibrary{positioning}

\theoremstyle{plain}
\numberwithin{equation}{section}
\newtheorem{thm}{Theorem}[section]
\newtheorem{theorem}[thm]{Theorem}
\newtheorem{lemma}[thm]{Lemma}

\newtheorem{assumption}[thm]{Assumption}

\newtheorem{corollary}[thm]{Corollary}

\newtheorem{example}[thm]{Example}

\newtheorem{definition}[thm]{Definition}

\newtheorem{proposition}[thm]{Proposition}

\newtheorem{remark}[thm]{Remark}
\newtheorem{problem}[thm]{Problem}

\def\de{\delta}

\def\ep{\epsilon}

\def\Ga{\Gamma}

\def\la{\lambda}
\def\La{\Lambda}

\def\Om{\Omega}
\def\pa{\partial}

\def\vphi{\varphi}

\def\N{\mathbb{N}}

\def\R{\mathbb{R}}

\def\ex{\exists}

\def\grad{\nabla}
\def\lang{\langle}

\def\rang{\rangle}

\def\Bor{\mathcal{B}}

\def\Lin{\mathcal{L}}

\def\argmin{\text{argmin}}

\def\des{\text{des}}

\def\dist{\text{dist}}

\def\liminf{\text{liminf}}
\def\limsup{\text{limsup}}

\def\loc{\text{loc}}

\def\supp{\text{supp}}

\newcommand\beq{\begin{equation}}
\newcommand{\bburl}[1]{\textcolor{blue}{\url{#1}}}
\newcommand\eeq{\end{equation}}
\newcommand\bea{\begin{eqnarray}}
\newcommand\eea{\end{xq}}
\newcommand\bi{\begin{itemize}}
\newcommand\ei{\end{itemize}}
\newcommand\ben{\begin{enumerate}}
\newcommand\een{\end{enumerate}}

\everymath{\displaystyle}

\title{On convergence of solutions to nonlocal optimal control problems with quasi-minimization constraints}
\author{Joshua M. Siktar}
\date{\today}							

\begin{document}
\maketitle


\begin{abstract}
This paper studies a new class of nonlocal optimal control problems where the constraints involve approximations of minimizers of quasiconvex energies. These problems are parameterized by a fractional parameter $s \in (0, 1)$ and a horizon parameter $\de > 0$, and the energy density depends on a nonlocal fractional gradient. Here, the constraint consists of finding quasi-minimizers with respect to the quasiconvex energy. Despite the fact that the energies of interest may not have unique minimizers, we may prove the existence of solutions to this class of control problems. 

The constraint in \cite{cueto2026localization} was finding global minimizers of the energy, and this problem's main limitation was an inability to prove convergence of solutions for the nonlocal control problems to those of a corresponding local, PDE-constrained optimal control problem. While this issue arises from the lack of uniqueness of minimizers for the constraining energy, we get stronger convergence results with our new choice of constraints. Namely, we obtain convergence of minimizers for nonlocal optimal control problems that have a general cost functional depending on the nonlocal gradient.

\end{abstract}


\section{Introduction}\label{Sec: intro}




This paper considers ill-posed, nonlocal optimal control problems with a quasi-minimization constraint. For $\ep > 0$, an \textbf{$\ep$-quasi-minimizer} of a functional $\mathcal{G}$ is a function $u$ such that $\mathcal{G}(u) \leq \inf \mathcal{G}(\cdot) + \ep$, and this is reflected in the following problem statement:
        	\begin{equation}\label{Eq: genericQuasiMins}
		\begin{cases}
			\min\{\mathcal{F}(u, g) \ | \  (u, g) \in H^{s, p, \de}_0(\Om_{-\de}; \R^n) \times Z_{\text{ad}}\} \\
			\mathcal{W}^{\de, s}_g(u) - \argmin_{H^{s, p, \de}_0(\Om_{-\de}; \R^n)}\mathcal{W}^{\de, s}_g(\cdot) \leq \ep.
		\end{cases}
	\end{equation}

    Here $Z_{\text{ad}}$ is a class of input (control) variables, $H^{s, p, \de}_0$ is the energy space for the nonlocal energy density $\mathcal{W}^{\de, s}_g(\cdot)$, and $\mathcal{F}(\cdot, \cdot)$ is a cost functional; exact assumptions will be provided later. It is well-known that all critical points, including local minima, of appropriately defined quasiconvex energies  correspond to equilibrium solutions of Cauchy's equations of motion (see \cite{cauchy1828exercices, green1967formulation} for classical results and \cite{bellido2020bond} for connections to nonlocal hyperelasticity). 


We prove existence of solutions to problems of the form \eqref{Eq: genericQuasiMins}, and then consider convergence results of solutions to corresponding local, optimal control problems whose constraints depend on the classical gradient. This paper builds upon \cite{cueto2026localization}: by using quasi-minimization (instead of global minimization) constraints, we are now able to prove convergence of solutions for the family of nonlocal control problems. A more general cost functional depending on a nonlocal gradient is also utilized. Moreover, the main convergence results establish a novel analytic technique for proving convergence of solutions to parameterized optimal control problems beyond when nonlocalities are present, that involves leveraging quasi-minimization to carefully construct recovery sequences. We choose to study convergence through the lens of $\Ga$-convergence for the families of nonlocal energies.

 Since optimal control problems with constraints involving quasi-minimization are a rarity in the literature, we explain the relevance of these problems on two fronts. First, integral functionals arising in the study of relaxation theory (see  \cite[Theorem 2.16]{fonseca1993relaxation} for an example posed on $BV$-spaces, and \cite{buttazzo1982gamma, dal2012introduction, Rin} for an overview of relaxation theory) promote quasi-minimization by considering arbitrary sequences of functions that approximate a minimizer. This gives a method to feasibly study optimal control problems where the energy minimization constraint involves an energy without a classical minimizer, such as one containing a double-well potential. Second, from a computational standpoint, when we seek to minimize an energy functional we usually prescribe a tolerance threshold; in the case of finding quasi-minimizers we may view that tolerance as being accounted for in the analytical formulation of the problem. In fact, quasi-minimization has been utilized to formulate convergence of minima results in the setting of Discontinuous Galerkin methods for plate-bending models in \cite{bonito2023numerical, bonito2021dg}.

The problem  \eqref{Eq: genericQuasiMins} is posed in a function space framework involving nonlocal gradients; in other words, these function spaces admit functions possessing singularities and account for long-range interactions in physical and biological systems. The generic nonlocal gradient operator often takes the form
 \begin{equation}\label{Eq: genericNLGrad}
		\mathscr{G}_{\rho}u(x) \ := \ \int_{\Om}\rho(x - y)\frac{u(x) - u(y)}{|x - y|}  \frac{x - y}{|x - y|}dy,
	\end{equation}
	where $\rho \in L^1(\Om)$ is a kernel function containing a singularity at the origin. In this paper we assume $\Om \subset \R^n$ is bounded to more effectively model problems in solid mechanics. In fact, two of the most prominent classes of nonlocal models are designed as physically accurate representations of diffusion and solid mechanics, and the common characteristic is the utilization of integral operators in place of differential operators. The solid mechanics model of peridynamics (pioneered in \cite{silling2000reformulation, silling2007peridynamic, silling2005peridynamic}) uses $\de$ to measure the radius of interaction forces between particles.  An alternative taken in some papers is to define the nonlocal function spaces as the energy space of a double integral rather than the $L^p$ space of a nonlocal gradient of the form \eqref{Eq: genericNLGrad}; this is the approach used in \cite{bellido2014existence, mengesha2012nonlocal, mengesha2014bond, scott2019fractional}, among other places. 
    
    Meanwhile, in this paper we also utilize the parameter $s \in (0, 1)$ to represent a fractional order of differentiability, which arises in the Riesz $s$-fractional gradient of the form
    \begin{equation}\label{Eq: RieszKernel}
        \rho(\xi) \ := \ \frac{c_{n, s}}{|\xi|^{n - 1 + s}}.
    \end{equation}
    Now we may define the nonlocal gradient
    	\begin{equation}\label{Eq: introGrad}
		D^s_{\de}u(x) \ := \ c_{n, s}\int_{B(x, \de)}\frac{u(x) - u(y)}{|x - y|}  \frac{x - y}{|x - y|}\frac{w_{\de}(x - y)}{|x - y|^{n - 1 + s}}dy,
	\end{equation}
     where $\de > 0$ is a radius of interaction (known as the \textbf{horizon parameter}), and $w_{\de}: \R^n \rightarrow [0, \infty)$ is a radial, truncated, non-negative kernel whose exact properties will be specified in the notation section.  These spaces already have a well-established functional framework developed across several papers, including a notion of Fundamental Theorem of Calculus, Poincaré inequalities, and embeddings \cite{bellido2025nonlocal}; and nonlocal Green Theorems and Helmholtz decompositions \cite{bellido2024green}. Moreover, it is shown in \cite{kreisbeck2024non} that the kernel of \eqref{Eq: introGrad} is nontrivial, and this leads to this space being used to pose nonlocal boundary-value problems with Neumann boundary conditions. 
     
 Optimal control problems, including these nonlocal ones, follow a general blueprint: we consider a control input function into a constraint, and then output a state function that satisfies the constraint. Then, we minimize (or maximize) a cost functional $\mathcal{F}(\cdot, \cdot)$ over pairs of controls and states that satisfy the constraint (see \cite{hinze2008optimization, troltzsch2010optimal} for theory of PDE-constrained control problems), but unlike recent works \cite{cueto2026localization, d2014optimal, mengesha2023control, munoz2022local} we allow the cost functional to explicitly depend on the nonlocal gradient \eqref{Eq: introGrad}. Typically the constraint is an equation or system that has a unique state solution for any admissible control, but the present paper deviates from this by considering constraints that may have multiple solutions, because the underlying energy to be minimized is quasiconvex or even polyconvex (this difficulty also arises in \cite{cueto2026localization, siktar2024superlinear}). We note that a quasiconvex energy density is one that admits minimal energy with an affine deformation as compared to internally distorted deformations, while polyconvexity additionally allows adherence to the physical principle that infinite compressions cost infinite energy \cite{dacorogna2007direct, Rin}. Just as importantly, these are weaker notions of convexity more appropriate for the study of systems in nonlinear elasticity \cite{ball1976convexity}, and existence of minimizers for a quasi or polyconvex energy can still be guaranteed by the classical Morrey's Theorem \cite{acerbi1984semicontinuity, marcellini1985approximation, meyers1965quasi, morrey1952quasi}, but uniqueness is not guaranteed. Related ill-posedness issues also arise in the study of inverse and shape optimization problems; see \cite{allaire2001some, cherkaev1997topics} for discussion of these areas.

 With that in mind, one of the analytical motivations of these works is to investigate how the ill-posedness of the constraint influences existence and uniqueness theory for the control problems, and also convergence of solutions to those of local, PDE-constrained control problems.  
 We finally note that in all of these works, the asymptotic analysis in a varying nonlocal parameter is treated through $\Ga$-convergence (see \cite{buttazzo1982gamma, dal2012introduction} for the classical theory, and \cite{bellido2020bond, cueto2025gamma, cueto2026localization,  mengesha2023control, mengesha2025design, munoz2022local, schonberger2024nonlocal}  for the application of $\Ga$-convergence theory to nonlocal control problems); in this paper we consider the limits $s \rightarrow 1^-$ and $\de \rightarrow 0^+$, though \cite{cueto2025gamma} also considers the diverging horizon case ($\de \rightarrow +\infty$) outside the context of control problems. 

Now we outline the remaining sections of the paper. Section \ref{Sec: prelim} discusses notation, preliminary results, and precise problem statements. Next, Sections \ref{Sec: ExistenceQuasi} describes the existence theory for the quasi-minimizers control problem. Then, Section \ref{Sec: quasiMinimizersAsymptotics} details asymptotic results for the $\ep$-quasi-minimizers problem of the form \eqref{Eq: genericQuasiMins}, respectively.  Finally, Section \ref{Sec: Conclusion} gives concluding remarks and future problems.



\section{Preliminaries}\label{Sec: prelim}

\subsection{Notation}\label{Subsec: Notation}

We let $\Om \subset \R^n$ be an open, bounded domain, and let $B(x, r)$ be the open ball centered at $x \in \R^n$ with radius $r$. For $\de > 0$ fixed, we define $\Om_{\de} := \Om \cup \{x \in \R^n \ | \ \dist(x, \pa \Om) < \de\}$, and the nonlocal boundary as the set $\Om_{\de}\setminus \Om$. In addition, we define $\Om_{-\de} := \{x \in \Om: \dist(x, \pa \Om) > \de\}$. We also assume that $\Lin^n(\pa \Om_{-\de}) = 0$, where $\Lin^n(\cdot)$ denotes the Lebesgue measure on $\R^n$; meanwhile, $\Bor^n$ denotes the Borel measure on $\R^n$.

For any $p \in (1, \infty)$, $p'$ denotes the conjugate exponent of $p$, i.e., $p' = \frac{p - 1}{p}$. The exponent $p$ being fixed, we denote by $\lang \cdot, \cdot\rang$ the duality pairing $\lang \cdot, \cdot\rang_{L^{p'}(\Om; \R^n), L^p(\Om; \R^n)}$. Meanwhile, $\cdot$ denotes the Euclidean dot product on $\R^n$, and $\otimes$ denotes the standard tensor (outer) product on $\R^n$. Finally, $\argmin$ denotes a solution to a given minimization problem.

Meanwhile, a prototypical form of our nonlocal-fractional gradient, a smoothly truncated version of \eqref{Eq: RieszKernel}, is
\begin{equation}\label{Eq: kernelRhosDe}
		\rho^s_{\de}(x) \ = \ \frac{c_{n,s}}{ |x|^{n-1+s}}w_\de(x) \quad \text{ with } \quad c_{n,s} \:= \ \frac{n+s-1}{\gamma(1-s)},
	\end{equation}
    where $w_{\de}(\cdot)$ satisfies the following assumptions.

\begin{assumption}[Kernel assumptions]\label{Assump: Kernels}
		The family of truncated kernels $\{w_{\de}\}_{\de > 0}$ satisfies:
		\begin{enumerate}
			\item $w_\delta:\R^n\to[0,\infty)$ is radial; i.e. $w_\de(x)=\bar{w}_\de(|x|)$ for some non-negative $\bar{w}_\de\in C_c^\infty([0,\infty))$, with $\supp(\bar{w}_\de)\subset [0,\delta)$.
			\item There is a constant $0<b_0<1$ such that $\bar{w}|_{[0,b_0\de]}=a_0$, where $a_0 := \max_{r\ge 0}\bar{w}_\de(r)$.
			\item
			$\bar{w}_\de (r_1) \geq \bar{w}_\de (r_2)$ whenever $r_1 \leq r_2$.	
   \end{enumerate}
	\end{assumption}
    Under Assumption \ref{Assump: Kernels}, the kernel $\rho^s_{\de}$ defined in  \eqref{Eq: kernelRhosDe} belongs to $L^1 (\R^n)$. These assumptions also let us precisely define our nonlocal gradient (see \cite[Definition 3.1]{bellido2023non}).

    	\begin{definition}[Nonlocal gradient]\label{Def: NLGrad}
		Let $0\leq s < 1$ and $\de>0$. For $u\in C^\infty_c(\R^n; \R^n)$ the nonlocal gradient $D^s_\de u$ is defined as
			\begin{equation}\label{Eq: NLGrad}
				D^s_\de u(x) \ := \ \int_{B(x,\de)} \frac{u(x)-u(y)}{|x-y|} \otimes \frac{x-y}{|x-y|}\rho^s_{\de}(x - y) \, dy.
			\end{equation}
	\end{definition}

Despite \eqref{Eq: NLGrad} initially being defined for smooth functions, the natural energy space is is the closure of smooth compactly supported functions with respect to the given norm.

	\begin{definition}\label{Def: HspdeNorm}
		Let $1 \leq p <\infty$, $s \in (0, 1)$, $\de \in (0, \infty)$. Then $H^{s,p,\de}(\Om; \R^n)$ is defined as the closure of $C^{\infty}_c(\R^n; \R^n)$ under the norm
		\begin{equation}
			\|u\|_{H^{s,p,\de}(\Om; \R^n)}\ := \ \|u\|_{L^p(\Om_{\de}; \R^n)} + \|D^s_\de u\|_{L^p(\Om; \R^{n\times n})}.
		\end{equation}
        		In addition, the set of functions in $H^{s,p,\de}(\Om; \R^n)$ that vanishes outside of  $\Om_{-\de}$ is denoted as $H^{s, p, \de}_0(\Om_{-\de}; \R^n)$. 
	\end{definition}
    
\subsection{Functional analytic framework}\label{Subsec: analytic}

This subsection will catalog preliminary results needed on the function spaces described in Subsection \ref{Subsec: Notation}. We start with basic properties of these spaces, contained in \cite[Propositions 2.5-2.6]{bellido2023minimizers} and \cite[Corollary 1]{cueto2023variational}:
\begin{itemize}
    \item For $p \in [1, \infty)$, $(H^{s, p, \de}(\Om; \R^n), \|\cdot\|_{H^{s, p, \de}(\Om; \R^n)})$ is a separable Banach space, and is Hilbert when $p = 2$.
		\item If $p \in (1, \infty)$, then $(H^{s, p, \de}(\Om; \R^n), \|\cdot\|_{H^{s, p, \de}(\Om; \R^n)})$ is also reflexive.
		\item Whenever $1 \leq q \leq p < \infty$, we have the continuous embedding $H^{s, p, \de}(\Om; \R^n) \subset H^{s, q, \de}(\Om; \R^n)$.
		\item If $0 \leq s_1 \leq s_2 \leq 1$ and $p \in (1, \infty)$, then $H^{s_2, p, \de}_0(\Om; \R^n) \subset H^{s_1, p, \de}_0(\Om; \R^n)$ is a continuous embedding with an embedding constant depending only on $n$, $\de$, and $p$.
\end{itemize}

From here on we only consider $p \in (1, \infty)$.

Now we mention a nonlocal version of Poincaré's Inequality (first proven in \cite[Theorem 4]{cueto2023variational}).
\begin{proposition}[Nonlocal Poincaré inequality]\label{Prop: NLPoincare}
		Let $s \in [0, 1]$,  $\delta > 0$, $p \in (1, \infty)$, $\Om \subset \R^n$ be open and bounded. Then there exists a constant $C > 0$ such that for all $u \in H^{s, p, \de}_0(\Om; \R^n)$, we have
		\begin{equation}\label{Eq: NLPoincare}
			\|u\|_{L^p(\Om; \R^n)} \ \leq \ C\|D^s_{\de}u\|_{L^p(\Om; \R^{n\times n})}.
		\end{equation}
        \end{proposition}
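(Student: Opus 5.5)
We argue by contradiction combined with a compactness argument, which is the standard route for Poincaré-type inequalities on spaces with a nontrivial functional framework. Suppose \eqref{Eq: NLPoincare} fails. Then for every $k\in\N$ there is $u_k \in H^{s,p,\de}_0(\Om;\R^n)$ with $\|u_k\|_{L^p(\Om;\R^n)} > k\|D^s_\de u_k\|_{L^p(\Om;\R^{n\times n})}$. Normalizing, we may assume $\|u_k\|_{L^p} = 1$ and $\|D^s_\de u_k\|_{L^p}\to 0$. Since $u_k$ vanishes outside $\Om$, the $L^p(\Om_\de)$ norm equals the $L^p(\Om)$ norm, so $(u_k)$ is bounded in $H^{s,p,\de}(\Om;\R^n)$. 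By reflexivity (listed among the basic properties above, valid as $p\in(1,\infty)$), we extract a subsequence converging weakly to some $u$ in $H^{s,p,\de}$. The set of functions vanishing outside $\Om$ is convex and closed, hence weakly closed, so $u$ again vanishes outside $\Om$. Weak lower semicontinuity of the norm then gives $D^s_\de u = 0$.

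\emph{Step 1 (compactness).} Next we need strong convergence $u_k\to u$ in $L^p$, so that $\|u\|_{L^p}=1$. For this we would use the nonlocal fundamental theorem of calculus from \cite{bellido2025nonlocal}, which represents $u = V^s_\de * D^s_\de u$ for compactly supported $u$, with a kernel $V^s_\de$ that is locally integrable and, for $s<1$, behaves like $|x|^{-(n-1+s)}$ near the origin. Because the supports of $u_k$ lie in the fixed bounded set $\overline{\Om}$, only the restriction of $V^s_\de$ to a bounded set matters. Convolution with such an $L^1_{\loc}$ kernel, applied to functions bounded in $L^p$ with support in a fixed compact set, is a compact operator into $L^p$ on bounded sets by the Fréchet--Kolmogorov criterion: translations of $V^s_\de$ are continuous in $L^1$. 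This yields $u_k\to u$ strongly in $L^p(\Om)$.

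\emph{Step 2 (triviality of the kernel).} It remains to show that $D^s_\de u=0$ together with $u=0$ outside $\Om$ forces $u=0$. Applying the same representation formula to $u$ gives $u = V^s_\de * D^s_\de u = 0$, which contradicts $\|u\|_{L^p}=1$. The case $s=1$ is simply the classical Poincaré inequality, and the case $s=0$ would be handled analogously using the corresponding kernel.

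The main obstacle is justifying the representation formula and its compactness properties for general elements of the closure space, rather than only for $C_c^\infty$ functions. The plan is to prove the formula for smooth compactly supported approximants and pass to the limit in $L^p$ by density and Young's inequality, using that all supports stay in a fixed bounded set. This matters because \cite{kreisbeck2024non} shows that $D^s_\de$ has a nontrivial kernel on general domains. It is precisely the vanishing-outside-$\Om$ condition, through the convolution representation, that excludes this kernel here.
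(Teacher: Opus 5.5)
Your Step~2 fails, and not for a technical reason. The representation $u=V^s_\de*D^s_\de u$ is a whole-space identity, so concluding $u=0$ requires $D^s_\de u=0$ on all of $\R^n$. In your reading $u$ only vanishes outside $\Om$, and then $D^s_\de u$ is supported in $\overline{\Om_\de}$, not in $\Om$. For $x\in\Om_\de\setminus\overline{\Om}$ one has $D^s_\de u(x)=-\int_{\Om\cap B(x,\de)}\frac{u(y)}{|x-y|}\otimes\frac{x-y}{|x-y|}\rho^s_\de(x-y)\,dy$, which is generally nonzero. Your hypotheses make $D^s_\de u$ vanish (and control $D^s_\de u_k$) only on $\Om$. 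So Step~2 does not follow, and Step~1 has no $L^p$ bound for $D^s_\de u_k$ on the collar $\Om_\de\setminus\Om$.

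In fact the conclusion is false in this reading. Write $D^s_\de=\grad(Q^s_\de*\cdot)$ with $Q^s_\de(z)=c_{n,s}\int_{|z|}^\infty t^{-(n+s)}\bar w_\de(t)\,dt$. By Assumption~\ref{Assump: Kernels}(2), $Q^s_\de(z)=A|z|^{-(n-1+s)}+B$ for $|z|\le b_0\de$. Take $\Om=B(0,r)$ with $2r\le b_0\de$. Then M.~Riesz's equilibrium density $u=(r^2-|x|^2)_+^{-(1-s)/2}e_1$ makes $Q^s_\de*u$ constant on $\Om$, i.e.\ $D^s_\de u=0$ in $\Om$, although $u\neq0$ vanishes outside $\Om$. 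This $u$ lies in the energy space at least for $p$ close to $1$. So vanishing outside $\Om$ does \emph{not} exclude the kernel of \cite{kreisbeck2024non}.

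What makes the statement true is the convention used everywhere else in the paper, $u\in H^{s,p,\de}_0(\Om_{-\de};\R^n)$ (cf.\ Proposition~\ref{Prop: NLPoincares}). If $x\notin\Om$ and $y\in\Om_{-\de}$, the segment $[y,x]$ meets $\pa\Om$, so $|x-y|>\de$. Hence $D^s_\de u=0$ outside $\Om$ and $\|D^s_\de u\|_{L^p(\Om)}=\|D^s_\de u\|_{L^p(\R^n)}$. Alternatively, keep $u=0$ outside $\Om$ but measure $D^s_\de u$ on $\Om_\de$. This support property is the crux of the proof and must be established and used explicitly.

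The paper gives no proof of its own; it quotes \cite[Theorem 4]{cueto2023variational}. Once the support property holds, your contradiction argument is unnecessary. For $x\in\Om$, $u(x)=\int_\Om D^s_\de u(y)\,V^s_\de(x-y)\,dy$, and Young's inequality gives directly $\|u\|_{L^p(\Om)}\le\|V^s_\de\|_{L^1(B(0,\mathrm{diam}\,\Om))}\|D^s_\de u\|_{L^p(\Om)}$. The constant is explicit, so its dependence on $s$ can be tracked (cf.\ Proposition~\ref{Prop: NLPoincares}), which a fixed-$s$ compactness argument does not allow.

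A second gap concerns $s=0$. Near the origin $V^s_\de$ behaves like $|x|^{s-n}$, as for the Riesz gradient, where $V^s(x)=c\,x/|x|^{n+1-s}$. The exponent $-(n-1+s)$ you wrote is the singularity of $\rho^s_\de$ and $Q^s_\de$, and it wrongly suggests integrability at $s=0$. For $s\in(0,1)$ both exponents are locally integrable, so Step~1 survives. For $s=0$, however, the kernel is of size $|x|^{-n}$, and $D^0_\de$ is a truncated Riesz transform, bounded on $L^p$. Young's inequality and Fr\'echet--Kolmogorov then fail, and so does compactness. For example, $\phi(x)\sin(kx_1)e_1$ with $\phi\in C^\infty_c(\Om_{-\de})$ is bounded in $H^{0,p,\de}$ and converges weakly, but not strongly, to $0$. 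So $s=0$ cannot be ``handled analogously''. It needs a direct estimate based on $L^p$-boundedness of a Calder\'on--Zygmund operator, which is where $p\in(1,\infty)$ is genuinely used.
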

This version will be sufficient for the existence theory in Section \ref{Sec: ExistenceQuasi}. However, $s$ and $\de$ will both vary at different points in Section \ref{Sec: quasiMinimizersAsymptotics}, so in that section we mention versions of \eqref{Eq: NLPoincare} with constants independent of $s$ or $\de$, respectively.

Now we mention the requisite compact embedding. This appears in \cite[Theorem 7.3]{bellido2023non} (see also \cite[Theorem 2.9]{bellido2023minimizers}), but we only mention the $p \in (1, \infty)$ case.
    	\begin{proposition}[Compact embedding]\label{Prop: CompactEmbedding}
		Let $s \in (0, 1)$, $p \in (1, \infty)$, and $\de > 0$ are fixed. Suppose $\{u_j\}^{\infty}_{j = 1} \subset H^{s, p, \de}_0(\Om_{-\de}; \R^n)$ is a sequence such that $u_j \rightharpoonup u$ weakly in $H^{s, p, \de}(\Om; \R^n)$. Then $u_j \rightarrow u$ strongly in $L^q(\Om; \R^n)$, where $q$ satisfies the following:
		\begin{itemize}
			\item $q \in [1, p^*_s)$ if $sp < n$;
			\item $q \in [1, \infty)$ if $sp = n$;
			\item $q \in [1, \infty]$ if $sp > n$,
		\end{itemize}
		where $p^*_s := \frac{np}{n - sp}$. In addition, we have that $u \in H^{s, p, \de}_0(\Om_{-\de}; \R^n)$.
	\end{proposition}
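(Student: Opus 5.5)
The strategy is to combine reflexivity, the nonlocal Poincaré inequality, and a closedness argument for the vanishing condition, then upgrade the weak convergence to strong convergence in $L^q$ using the compact Sobolev-type embedding for the Bessel-type fractional space $H^{s,p}(\R^n)$. First, the bound $\sup_j \|u_j\|_{H^{s,p,\de}} < \infty$ follows from weak convergence by the uniform boundedness principle. Since each $u_j$ vanishes outside $\Om_{-\de}$, Proposition \ref{Prop: NLPoincare} shows that the $L^p(\Om_\de)$ part of the norm is controlled by $\|D^s_\de u_j\|_{L^p}$.

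Second, we show $u \in H^{s,p,\de}_0(\Om_{-\de};\R^n)$. The subspace of functions vanishing outside $\Om_{-\de}$ is convex and strongly closed: if $u_k \to u$ in norm, then $u_k \to u$ in $L^p(\Om_\de)$, so $u = 0$ a.e. on $\Om_\de \setminus \Om_{-\de}$. By Mazur's lemma, this subspace is therefore weakly closed, so the weak limit $u$ lies in it. For the strong convergence it suffices, by linearity, to treat $v_j := u_j - u \rightharpoonup 0$.

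Third, and this is the key step, we transfer the problem to the Riesz fractional gradient. The plan is to use the known nonlocal-to-fractional translation from \cite{bellido2023non}. There one writes $D^s_\de v = D^s(Q^s_\de * v)$ for a kernel $Q^s_\de$, or equivalently $v = V^s_\de * D^s_\de v$ up to an explicit convolution operator built from $\rho^s_\de$. Since the $v_j$ are supported in the fixed compact set $\overline{\Om_{-\de}}$, one extends $D^s_\de v_j$ and obtains a representation
\[
v_j \ = \ \mathcal{T}(D^s_\de v_j) \quad \text{on } \Om,
\]
where $\mathcal{T}$ is a linear operator, $\mathcal{T}(f) = \nabla I_{1-s}$-type convolution plus a smooth remainder. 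The operator $\mathcal{T}$ maps bounded subsets of $L^p$ into bounded subsets of the Bessel space $H^{s,p}$ restricted to a bounded set. The main obstacle is controlling this representation near the boundary, because $D^s_\de v_j$ is only known on $\Om$. Compact support of $v_j$ in $\Om_{-\de}$ means, however, that $D^s_\de v_j$ vanishes outside $\Om$ (the $\de$-neighborhood of $\Om_{-\de}$ is contained in $\Om$), so the extension by zero is exact.

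Finally, the compact embedding $H^{s,p}(B) \hookrightarrow\hookrightarrow L^q(B)$ for bounded $B$ is the fractional Rellich–Kondrachov theorem. It holds for $q < p^*_s$ when $sp<n$, for all finite $q$ when $sp=n$, and for $q \le \infty$ when $sp>n$ via compact embedding into Hölder spaces. This embedding gives that $\{v_j\}$ is precompact in $L^q(\Om)$. Every strong $L^q$ limit point must coincide with the weak limit $0$, since weak convergence in $H^{s,p,\de}$ implies distributional convergence. Hence the whole sequence satisfies $v_j \to 0$ in $L^q(\Om;\R^n)$.
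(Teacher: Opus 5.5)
The paper gives no argument of its own here. It imports the statement from \cite[Theorem 7.3]{bellido2023non}, where the compact embeddings are derived from the nonlocal fundamental theorem of calculus $u = V^s_\de * D^s_\de u$, so your skeleton is the right one. Your preliminary steps are sound. The vanishing condition cuts out a closed linear subspace, hence a weakly closed one. The zero extension of $D^s_\de v_j$ is exact because $B(y,\de)\subset\Om$ for every $y\in\Om_{-\de}$. You should add one thing: the representation is proved for $C^\infty_c$ functions, and it reaches $v_j$ by mollifying the zero extension, which keeps supports in a fixed compact set and commutes with $D^s_\de$.

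There is a slip in your description of $\mathcal{T}$. $\nabla I_{1-s}$ is the Riesz gradient $D^s$ itself, an operator of order $+s$, and it does not map $L^p$ into $H^{s,p}$. The kernel $V^s_\de(z)$ behaves near the origin like a multiple of $z/|z|^{n+1-s}$, so $\mathcal{T}$ is of order $-s$, which is what your mapping claim requires. Even with the correct kernel, boundedness $L^p\to H^{s,p}_{\mathrm{loc}}$ is a singular-integral statement. It needs derivative estimates (Mikhlin or Calder\'on--Zygmund type) on $V^s_\de$, and you assert this rather than prove it.

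You can bypass Bessel spaces altogether using only the size bound $|V^s_\de(z)|\le C|z|^{-(n-s)}$ for $|z|\le R:=\operatorname{diam}\Om$. On $\Om$ one has $v_j = K*(D^s_\de v_j)$ with $K := V^s_\de\chi_{B(0,R)}$, and $K\in L^r$ for every $r<n/(n-s)$. Young's inequality, plus approximation of $K$ in $L^r$ by continuous kernels, makes $f\mapsto K*f$ compact from $L^p(\Om)$ into $L^q(\Om)$ for all $q<p^*_s$, and for all finite $q$ if $sp=n$. If $sp>n$, take $r=p'$ and use Arzel\`a--Ascoli to get compactness into $C(\overline{\Om})$. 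Compactness then turns $D^s_\de v_j\rightharpoonup 0$ into $v_j\to 0$ directly, with no identification of limit points needed. Your Bessel route buys uniform $H^{s,p}$ (and H\"older) bounds, at the price of that multiplier estimate.
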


    Finally, for our asymptotic analysis, we will need variants for when $s \rightarrow 1^-$ or $\de \rightarrow 0^+$.

    \begin{proposition}[Compactness as $s \rightarrow 1^-$]\label{Prop: CompactVarys->1}
        Suppose that $\de > 0$, $p \in (1, \infty)$ are fixed and $\{u_s\}_{s < 1}$ is a family of functions in $H^{s, p, \de}_0(\Om; \R^n)$ such that $\liminf_{s \rightarrow 1^-}\|D^s_{\de}u_s\|_{L^p(\Om; \R^{n \times n})} < \infty$. Then there exists a (not relabeled) sub-sequence and a $u \in W^{1, p}_0(\Om; \R^n)$ such that $D^s_{\de}u_s \rightharpoonup \grad u$ weakly in $L^p(\Om; \R^{n \times n})$ and $u_s \rightarrow u$ strongly in $L^p(\Om; \R^n)$. 
    \end{proposition}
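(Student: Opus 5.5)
The plan is to pass to the fractional Riesz gradient, where $s\to 1^-$ compactness results are available, and then return to $D^s_\de$. Following Bellido–Cueto–Mora-Corral, for $u \in H^{s,p,\de}_0(\Om;\R^n)$ (extended by zero) there is a translation-invariant operator $Q^s_\de$ with $u = Q^s_\de * D^s_\de u$. Equivalently, $D^s_\de u = D^s (Q^s_\de * u)$ up to the convolution identity $D^s_\de u = \nabla(Q^s_\de * u)$. The kernel $Q^s_\de$ is smooth away from the origin and satisfies estimates uniform in $s$ near $1$.

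The first step will therefore use the identity $D^s_\de u_s = \nabla V_s$ with $V_s := Q^s_\de * u_s$. Here $V_s$ is supported in a fixed compact set (say $\Om_{\de}$), because $u_s$ vanishes outside $\Om$ and $Q^s_\de$ is compactly supported up to a smooth tail. Along a subsequence realizing the liminf, $\|\nabla V_s\|_{L^p}$ is bounded. Poincaré on a fixed bounded set bounds $V_s$ in $W^{1,p}$, and Rellich then gives $V_s \to V$ strongly in $L^p$ with $\nabla V_s \rightharpoonup \nabla V$ weakly. Consequently $D^s_\de u_s \rightharpoonup \nabla V$ in $L^p(\Om;\R^{n\times n})$.

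The second step identifies $u$ and shows strong convergence. We have $u_s = V_s + (\text{difference})$, where the difference is a convolution of $D^s_\de u_s$ against a kernel $R^s_\de := Q^s_\de$ minus the identity-type part. The aim is to show that this remainder kernel tends to zero in $L^1$ as $s\to1^-$, since $Q^s_\de$ approaches the Dirac-like behaviour of the fractional gradient at $s=1$. A cleaner route is to prove that the operator $u\mapsto u - Q^s_\de*u$ has norm $\to 0$ on bounded sets in $H^{s,p,\de}$, which yields $u_s - V_s \to 0$ in $L^p$. Hence $u_s \to u := V$ strongly in $L^p$. Because $u_s = 0$ outside $\Om$, we get $u=0$ a.e. outside $\Om$, and since $u\in W^{1,p}(\R^n)$ this gives $u\in W^{1,p}_0(\Om;\R^n)$ (using mild boundary regularity or the standard characterization). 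This also gives $D^s_\de u_s \rightharpoonup \grad u$.

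The main obstacle is the uniform-in-$s$ control of the kernels $Q^s_\de$ and $R^s_\de$ as $s\to1^-$. The difficulty is that the constant $c_{n,s}$ blows up like $1/\gamma(1-s)$, so the estimates must be tracked carefully through Fourier analysis. The approach I would try first is to write $\widehat{\rho^s_\de}$, show that the symbol of $Q^s_\de$ converges locally uniformly to $1$ at low frequencies with uniform Mikhlin-type bounds, and deduce the required $L^1$ and $L^p$ multiplier estimates. If that fails, I would instead cite the corresponding result of Cueto–Kreisbeck–Schönberger on $\Gamma$-convergence as $s\to1$, which contains exactly this compactness.
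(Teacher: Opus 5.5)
The paper gives no proof of this proposition. It is imported from \cite[Lemma 9]{cueto2023variational}, so your fallback of citing Cueto--Kreisbeck--Sch\"onberger is exactly the paper's route. As a self-contained proof, however, your proposal has a genuine gap.

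Step 1 is sound once the identities are stated correctly. What you actually use is $D^s_{\de}u = Q^s_{\de}*\grad u = \grad(Q^s_{\de}*u)$, with $Q^s_{\de}\ge 0$ radial and supported in $\overline{B(0,\de)}$. Two formulas in your first paragraph are wrong: $u = Q^s_{\de}*D^s_{\de}u$ is not the nonlocal fundamental theorem of calculus, which uses a different, vector-valued kernel; and $D^s(Q^s_{\de}*u)$ should read $\grad(Q^s_{\de}*u)$. With the correct identity, $V_s$ is bounded in $W^{1,p}$ of a fixed ball and precompact in $L^p$, provided $\|D^s_{\de}u_s\|_{L^p(\R^n)}$ is bounded. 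That holds under the convention that $u_s$ vanishes outside $\Om_{-\de}$, so that $D^s_{\de}u_s$ is supported in $\Om$ and the limit lies in $W^{1,p}_0(\Om_{-\de})$; this is how the result is used in Subsection~\ref{Subsec: quasiMinimizersAsymptoticss->1}. It fails if $u_s$ merely vanishes outside $\Om$ and only $\|D^s_{\de}u_s\|_{L^p(\Om)}$ is controlled.

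The gap is Step 2. Everything rests on $\|u_s - Q^s_{\de}*u_s\|_{L^p}\to 0$, which you only state as an aim. This is not an auxiliary estimate: it is equivalent to $L^p$-precompactness of $\{u_s\}$, i.e.\ to the proposition itself.

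Your proposed mechanism cannot work as stated. Writing $u_s - V_s$ through $D^s_{\de}u_s$ requires the multiplier $(1-\widehat{Q^s_{\de}}(\xi))\,\xi/(i|\xi|^2\widehat{Q^s_{\de}}(\xi))$. Since $\widehat{Q^s_{\de}}(0)=\|Q^s_{\de}\|_{L^1}$ is in general $\neq 1$ for $s<1$, this symbol has a $\xi/|\xi|^2$ singularity at the origin. Its kernel behaves like a nonzero multiple of $x/|x|^n$ at infinity, so it is not in $L^1$, and no ``remainder kernel $\to 0$ in $L^1$'' statement is available.

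Moreover, a direct computation gives $\|Q^s_{\de}\|_{L^1}\to a_0$. It uses $c_{n,s}=O(1-s)$, which tends to $0$ rather than blowing up, and $\bar w_{\de}=a_0$ on $[0,b_0\de]$. Hence both $u_s - V_s\to 0$ and the identification of the weak limit as $\grad u$ (rather than $a_0\grad u$) require the normalization $a_0=1$ of Assumption~\ref{Assump: kernels->1}. Your argument never invokes it.

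A workable version of your Fourier plan splits frequencies with a smooth cutoff $\chi_R(D)$. On low frequencies,
\[
\|(\delta_0 - Q^s_{\de})*\chi_R(D)u_s\|_{L^p}\ \lesssim\ \Bigl(\bigl|1-\|Q^s_{\de}\|_{L^1}\bigr| + R\int|z|\,Q^s_{\de}(z)\,dz\Bigr)\|u_s\|_{L^p}\ \to\ 0 .
\]
This uses $a_0=1$, $\int|z|\,Q^s_{\de}(z)\,dz = O(c_{n,s})$, and the $s$-uniform Poincar\'e inequality (Proposition~\ref{Prop: NLPoincares}). On high frequencies you need an $s$-uniform lower bound $\widehat{Q^s_{\de}}(\xi)\gtrsim|\xi|^{s-1}$, with matching Mikhlin bounds for $(1-\chi_R)/(|\xi|\widehat{Q^s_{\de}})$. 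That gives a contribution $O(R^{-s})$ uniformly in $s$. This uniform high-frequency estimate is the real content of the lemma, and your proposal does not supply it.
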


    \begin{proposition}[Compactness as $\de \rightarrow 0^+$]\label{Prop: CompactVaryde-0}
         Suppose that $s \in (0, 1), p \in (1, \infty)$ are fixed and $\{u_{\de}\}_{\de > 0}$ is a family of functions in $H^{s, p, \de}_0(\Om; \R^n)$ such that $\liminf_{\de \rightarrow 0^+}\|D^s_{\de} u_{\de}\|_{L^p(\Om_{\de}; \R^{n \times n})} < \infty$. Then there exists a (not relabeled) sub-sequence and a $u \in W^{1, p}_0(\Om; \R^n)$ such that $D^s_{\de}u_{\de} \rightharpoonup \grad u$ weakly in $L^p(\Om; \R^{n \times n})$ and $u_{\de} \rightarrow u$ strongly in $L^p(\Om; \R^n)$. 
    \end{proposition}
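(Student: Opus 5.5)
The plan is to reduce Proposition \ref{Prop: CompactVaryde-0} to a uniform Poincaré inequality together with a nonlocal Fundamental Theorem of Calculus. The latter lets us write each $u_\de$ as a convolution of its nonlocal gradient with a vector kernel $V^s_\de$. The steps, in order, are as follows.

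\textbf{Step 1: uniform bounds along a subsequence.} Using the $\liminf$ hypothesis, I extract a sequence $\de_j \to 0^+$ along which $\|D^s_{\de_j}u_{\de_j}\|_{L^p} \le M$. Since the $u_\de$ vanish outside $\Om$, they can be regarded as elements of $L^p(\R^n;\R^n)$ supported in a fixed bounded set. I then want $\|u_{\de}\|_{L^p} \le C\|D^s_\de u_\de\|_{L^p}$ with $C$ independent of $\de \in (0,1]$. To get this, I would use the nonlocal FTC from \cite{bellido2025nonlocal}:
\[
u = V^s_\de * D^s_\de u,
\]
where $V^s_\de$ is a locally integrable kernel. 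The key fact is that, after the scaling $x \mapsto x/\de$, the $L^1$ norm of $V^s_\de$ restricted to a bounded set stays bounded as $\de \to 0$. Young's inequality on the bounded support then gives the uniform Poincaré constant. With both bounds in hand, reflexivity of $L^p$ gives $D^s_{\de_j}u_{\de_j} \rightharpoonup G$ and $u_{\de_j}\rightharpoonup u$ weakly in $L^p$.

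\textbf{Step 2: identifying $G = \grad u$.} I test against $\varphi \in C^\infty_c(\R^n;\R^{n\times n})$. The nonlocal integration by parts formula (the nonlocal Green theorem of \cite{bellido2024green}, with zero boundary terms because $u_\de$ vanishes near $\pa\Om$) gives
\[
\int D^s_\de u_\de : \varphi = -\int u_\de \cdot \operatorname{div}^s_\de \varphi .
\]
Next I would show that $\operatorname{div}^s_\de\varphi \to \operatorname{div}\varphi$ strongly in $L^{p'}$, uniformly on compact sets. The argument is a Taylor expansion inside the integral: the kernel $\rho^s_\de$ has first moment normalized by $c_{n,s}$, so the zeroth-order term produces $\operatorname{div}\varphi$, and the remainder is $O(\de\|D^2\varphi\|_\infty)$. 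Passing to the limit shows $\grad u = G$ in the distributional sense. Hence $u \in W^{1,p}$, and since $u$ vanishes outside $\Om$, $u \in W^{1,p}_0(\Om;\R^n)$.

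\textbf{Step 3: strong $L^p$ convergence.} This is the main obstacle, because Proposition \ref{Prop: CompactEmbedding} uses a fixed $\de$ and cannot be applied uniformly. I would again use the FTC, now in the form
\[
u_\de = V^s_\de * D^s_\de u_\de,
\]
and verify the Fréchet–Kolmogorov criterion. Tightness is automatic from the common support. For equicontinuity of translations I would split $V^s_\de = V^s_\de\chi_{B(0,\eta)} + V^s_\de\chi_{B(0,\eta)^c}$. The near part has $L^1$ norm small uniformly in $\de$, since $|V^s_\de(x)|\lesssim |x|^{-(n-1)}$ near $0$, which is integrable. The far part is $W^{1,1}$-bounded on bounded sets. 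Together these give
\[
\|\tau_h u_\de - u_\de\|_{L^p} \le \omega(|h|)\,M
\]
with a modulus $\omega$ independent of $\de$. Relative compactness in $L^p$ combined with the weak limit then yields $u_{\de_j}\to u$ strongly.

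If the needed uniform-in-$\de$ estimates on $V^s_\de$ are not available in the literature, the fallback is an alternative route. One would compare $D^s_\de u$ with $\grad(Q^s_\de * u)$ for a compactly supported $Q^s_\de$ that is an approximate identity as $\de\to0$. Rellich compactness applied to $Q^s_\de*u_\de$ would then give the result, together with the estimate
\[
\|Q^s_\de*u_\de - u_\de\|_{L^p}\to 0.
\]
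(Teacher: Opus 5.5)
\textbf{Main issue: the kernel normalization.} The paper does not prove this proposition; it imports it from \cite[Lemma 3.6]{cueto2025gamma}. Your architecture is a sensible self-contained alternative: a uniform Poincar\'e inequality from $u=V^s_\de*D^s_\de u$, identification of the limit by nonlocal integration by parts, and Fr\'echet--Kolmogorov for strong convergence. However, the two steps where the $\de$-dependence actually enters rest on a claim that is false for the kernel as defined here.

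In Step 2, the zeroth-order term of your Taylor expansion is $\bigl(\frac1n\int_{\R^n}\rho^s_\de\bigr)\operatorname{div}\varphi$, and $c_{n,s}$ does not make this factor equal to $1$. With $c_{n,s}$ fixed and $w_\de=w(\cdot/\de)$, the substitution $z=\de y$ gives
\[
\int_{\R^n}\rho^s_\de(z)\,dz \;=\; \de^{1-s}\int_{\R^n}\rho^s_1(y)\,dy\;\longrightarrow\;0,
\]
so $\operatorname{div}^s_\de\varphi\to 0$, not $\operatorname{div}\varphi$. In the same way $Q^s_\de=\de^{1-s-n}Q^s_1(\cdot/\de)$ and $V^s_\de=\de^{s-n}V^s_1(\cdot/\de)$. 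Hence $\int_{B_R}|V^s_\de|\approx R\,\de^{s-1}\to\infty$, so the ``key fact'' of Step 1 fails too.

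This is not cosmetic: under this normalization the statement itself is false. Take $u_\de:=u\in C^\infty_c(\Om;\R^n)\setminus\{0\}$. Then $|D^s_\de u|\le\|\grad u\|_\infty\int\rho^s_\de\to0$, so the hypothesis holds, yet the conclusion would force $\grad u=0$. Your argument needs, and must state explicitly, a $\de$-dependent normalization with $\frac1n\int\rho^s_\de=1$. Equivalently, $Q^s_\de=\de^{-n}Q^s_1(\cdot/\de)$ with $\int Q^s_1=1$, for instance by replacing $c_{n,s}$ by a suitable multiple of $\de^{s-1}c_{n,s}$. The paper's assumption on $w_\de$ does not provide this either.

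\textbf{What works once the normalization is imposed, and two corrections.} Scaling now gives $V^s_\de=\de^{1-n}V^s_1(\cdot/\de)$. Therefore $\int_{B_R}|V^s_\de|=\de\int_{B_{R/\de}}|V^s_1|\lesssim R$, because $|V^s_1(y)|\lesssim|y|^{1-n}$ at infinity, and your Taylor remainder is indeed $O(\de\|D^2\varphi\|_\infty)$.

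In Step 3, however, the singularity of $V^s_\de$ at the origin is the fractional one, $|x|^{-(n-s)}$, which is strictly stronger than $|x|^{-(n-1)}$. The correct uniform bound is $|V^s_\de(x)|\lesssim|x|^{-(n-s)}$ on $B_1$ for $\de\le1$. This gives $\int_{B_\eta}|V^s_\de|\lesssim\eta^s$, which is still enough for your near/far splitting.

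Your fallback uses the translation mechanism the paper relies on elsewhere. Since $D^s_\de u=\grad(Q^s_\de*u)$ is an exact identity, the limit is identified for free and Rellich applies to $Q^s_\de*u_\de$. You leave open the estimate $\|u_\de-Q^s_\de*u_\de\|_{L^p}\to0$, and it is circular if attempted via the approximate identity alone. It follows from $u-Q^s_\de*u=(V^s_\de-V)*D^s_\de u$ with $V(x)=x/(|S^{n-1}|\,|x|^n)$, together with $\|V^s_\de-V\|_{L^1(B_R)}=\de\|V^s_1-V\|_{L^1(B_{R/\de})}\to0$, since $V^s_1-V=o(|y|^{1-n})$ at infinity.

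Finally, concluding $u\in W^{1,p}_0(\Om;\R^n)$ from $u\in W^{1,p}(\R^n;\R^n)$ with $u=0$ off $\Om$ requires some regularity of $\pa\Om$, for example Lipschitz.
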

    These results are \cite[Lemma 9]{cueto2023variational} and \cite[Lemma 3.6]{cueto2025gamma} (see also \cite[Lemma 6.3.6]{schonberger2024nonlocal}), respectively. They also apply in situations with nonzero nonlocal boundary data.


\subsection{Notions of convexity, energies, and minimality}\label{Subsec: energy}

Now recall, as discussed in the introduction, the more standard notions of convexity in vector-valued problems are polyconvexity and quasiconvexity. We define these (classical) notions here.

\begin{definition}[Polyconvex energy]\label{Def: pConvexEnergy}
		Let $F\in \R^{n\times n}$ and $\mu(F)$ denote the collection of all its minors. Then, a function $W: \Om \times \R^n \times \R^{n \times n} \rightarrow \R$ is said to be \textbf{polyconvex} if there exists a convex function $\bar{W}$ such that
		\begin{equation}\label{eq: pConvexEnergy}
			W(x, u, A) \ := \ \bar{W}(x,u,\phi(F)).
		\end{equation}
	\end{definition}

    \begin{definition}[Quasiconvex energy]\label{Def: qConvexEnergy}
		An energy $W: \Om \times \R^n \times \R^{n \times n} \rightarrow \R$ is said to be \textbf{quasiconvex} if for a.e. $x \in \Om$, all $u \in \R^n$, and $A \in \R^{n \times n}$ the following inequality holds for all $\vphi \in W^{1, \infty}_0((0, 1)^n; \R^n)$:
		\begin{equation}\label{eq: qConvexEnergy}
			W(x, u, A) \ \leq \ \int_{(0, 1)^n}W(x, u, A + \grad \vphi(y))dy.
		\end{equation}
	\end{definition}

   Now, we provide the assumptions needed for our energy density $W$.

\begin{assumption}[Assumptions on energy density]
 \label{Assump: energyDensityAssump}
 The energy density $W: \Om \times \R^n \times \R^{n \times n} \to \R \cup \{ \infty \}$ is quasiconvex (in the sense of Definition \ref{Def: qConvexEnergy}) and is assumed to satisfy:
 \begin{itemize}
     \item $W$ is $\mathcal{L}^n \times \mathcal{B}^n \times \mathcal{B}^{n \times n}$-measurable and is a Carathéodory integrand 
\item $W (x, \cdot, \cdot)$ is lower semi-continuous for a.e.\ $x \in \Om$.
\item $W$ satisfies the growth bounds 
 \begin{equation} \label{Eq: WBound}
     c_w|A|^p - c_0 \ \leq \ W(x, z, A) \ \leq \ C_w(1 + |z|^p + |A|^p)
 \end{equation}
 for some $c_w, c_0, C_w > 0$, $p\in (1,\infty)$, a.e. $x \in \Om$, and all $A \in \R^{n \times n}$.
 \end{itemize}
 \end{assumption}

 We stress that $W$ does not need to be [Fréchet] differentiable for our analysis, as at no point do we need to introduce an Euler-Lagrange system associated with $W$. We may also now define our nonlocal energy density $\mathcal{W}^{s, \de}_g: L^p(\Om; \R^n) \rightarrow \R$, for fixed $\de > 0$, $s\in (0,1)$, and $g \in L^{p'}(\Om; \R^n)$ as follows:
	\begin{equation}\label{Eq: NLEnergy}
		\mathcal{W}^{\de, s}_g(u) \ := \ \begin{cases}
			\int_{\Om}W(x, u(x), D^s_{\de}u(x))dx - \lang g, u\rang, \ u \in H^{s, p, \de}(\Om; \R^n) \\
			+\infty \ \text{otherwise},
		\end{cases}
	\end{equation}

The key energy minimization result for quasiconvex energy densities is \cite[Corollary 2]{cueto2023variational}, which can be applied to our energy $\mathcal{W}^{\de, s}_g(\cdot)$ since $W$ is quasiconvex and adheres to \eqref{Eq: WBound}, $s \in (0, 1)$ and $\de > 0$ are fixed, and $\Lin^n(\pa \Om_{-\de}) = 0$.

\begin{proposition}[Existence of minimizers for nonlocal energy density]\label{Prop: QuasiConvexityEnergy}
    Fix $g \in Z_{\text{ad}}$. Then there exists a $u_{\de, s} \in H^{s, p, \de}_0(\Om_{-\de}; \R^n)$ such that $u_{\de, s} \in \argmin_{v \in H^{s, p, \de}_0(\Om_{-\de}; \R^n)}\mathcal{W}^{\de, s}_g(v)$.
\end{proposition}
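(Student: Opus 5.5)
We argue by the direct method of the calculus of variations on the space $X := H^{s,p,\de}_0(\Om_{-\de};\R^n)$, with $s\in(0,1)$, $\de>0$ and $g\in Z_{\text{ad}}\subset L^{p'}(\Om;\R^n)$ fixed. The three ingredients are coercivity from the lower bound in \eqref{Eq: WBound} together with Proposition \ref{Prop: NLPoincare}, weak compactness from reflexivity, and weak lower semicontinuity. The last is the substantive point and is where quasiconvexity enters. We first note that $m := \inf_X \mathcal{W}^{\de,s}_g$ is finite from above, since $\mathcal{W}^{\de,s}_g(0) \le C_w\Lin^n(\Om)<\infty$ by the upper bound in \eqref{Eq: WBound}.

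\emph{Coercivity.} For $u\in X$, the lower bound in \eqref{Eq: WBound}, H\"older's inequality and Proposition \ref{Prop: NLPoincare} give
\[
\mathcal{W}^{\de,s}_g(u) \ \ge\ c_w\|D^s_\de u\|_{L^p}^p - c_0\Lin^n(\Om) - C\|g\|_{L^{p'}}\|D^s_\de u\|_{L^p}.
\]
Since $p>1$, the right-hand side tends to $+\infty$ as $\|D^s_\de u\|_{L^p}\to\infty$. Hence $m>-\infty$. Moreover, any minimizing sequence $\{u_j\}$ has bounded $\|D^s_\de u_j\|_{L^p}$, and by Poincar\'e it is bounded in $H^{s,p,\de}(\Om;\R^n)$, because $u_j$ vanishes outside $\Om_{-\de}$ and so its $L^p(\Om_\de)$ norm equals its $L^p(\Om)$ norm.

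\emph{Compactness.} The space is reflexive for $p\in(1,\infty)$. We therefore extract a subsequence with $u_j\rightharpoonup u$ weakly in $H^{s,p,\de}$, so that $D^s_\de u_j\rightharpoonup D^s_\de u$ weakly in $L^p$. Proposition \ref{Prop: CompactEmbedding} then gives $u\in X$ and $u_j\to u$ strongly in $L^p(\Om;\R^n)$, and after a further subsequence also a.e. In particular $\lang g,u_j\rang\to\lang g,u\rang$.

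\emph{Lower semicontinuity (main obstacle).} It remains to show
\[
\int_\Om W(x,u,D^s_\de u)\,dx \ \le\ \liminf_{j\to\infty}\int_\Om W(x,u_j,D^s_\de u_j)\,dx .
\]
Morrey/Acerbi--Fusco theory requires gradients, whereas $D^s_\de u_j$ is not a classical gradient. The plan is to use the nonlocal fundamental theorem of calculus from \cite{bellido2025nonlocal,cueto2023variational}. It provides a translation operator $\mathcal{Q}^s_\de$, a convolution with an integrable kernel, such that $D^s_\de u = \grad(\mathcal{Q}^s_\de u)$. Setting $v_j := \mathcal{Q}^s_\de u_j$, we get $v_j\in W^{1,p}_{\loc}$ with $\grad v_j = D^s_\de u_j$ bounded in $L^p$. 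Then $v_j\rightharpoonup v:=\mathcal{Q}^s_\de u$ weakly in $W^{1,p}$ on a neighbourhood of $\overline\Om$; the hypothesis $\Lin^n(\pa\Om_{-\de})=0$ is used here to handle the boundary layer. We then apply the Acerbi--Fusco lower semicontinuity theorem for quasiconvex Carath\'eodory integrands with $p$-growth, treating the strongly convergent $u_j$ as the lower-order variable. The difficulty is that this theorem is stated for $W(x,v,\grad v)$, while our integrand is $W(x,u,\grad v)$ with $u$ and $v$ different functions. This is handled by the general form of the theorem, which only needs the lower-order argument to converge in measure, together with the lower semicontinuity of $W(x,\cdot,\cdot)$. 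We use exactly this lower semicontinuity statement; it is \cite[Corollary 2]{cueto2023variational}.

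\emph{Conclusion.} Combining the three steps gives $\mathcal{W}^{\de,s}_g(u)\le\liminf_j \mathcal{W}^{\de,s}_g(u_j)=m$. Since $u\in X$, $u$ is a minimizer, and we set $u_{\de,s}:=u$.
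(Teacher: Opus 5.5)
Your direct-method argument is correct and is essentially the proof underlying \cite[Corollary 2]{cueto2023variational}, which the paper simply invokes for this proposition, so you have unpacked the cited result rather than taken a different route. The ingredients match: coercivity from \eqref{Eq: WBound} and Proposition \ref{Prop: NLPoincare}, reflexivity together with Proposition \ref{Prop: CompactEmbedding}, and lower semicontinuity via the translation $D^s_\de u=\grad(\mathcal{Q}^s_\de u)$. One bibliographic fix: as the paper uses it, Corollary 2 is this existence statement itself, so the lower semicontinuity in your third step should instead be cited as the characterization \cite[Theorem 5]{cueto2023variational}, which rests on the translation operator of \cite[Theorem 2]{cueto2023variational}.
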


Such a result can also readily be established for the local problem that is introduced in Subsection \ref{Subsec: LProblemFormulation}.

Since we focus on the case where $W$ is quasiconvex, we do not write out the analogous version for a polyconvex energy density here (with weaker growth assumptions), but merely mention it can be found as \cite[Theorem 6.1]{bellido2023minimizers}. Now we provide the rigorous definition of an $\ep$-quasi-minimum. 

\begin{definition}[$\ep$-quasi-minimum]\label{Def: epQuasiMin}
    Fix $\ep > 0$ and let $E: L^p(\Om; \R^n) \rightarrow \R$ be a proper energy functional, i.e., $\inf_{v \in L^p(\Om; \R^n)}E(v) > -\infty$. We say that $u \in L^p(\Om; \R^n)$ is an \textbf{$\ep$-quasi-minimum} if
    \begin{equation}\label{Eq: epQuasiMin}
        E(u) - \inf_{v \in L^p(\Om; \R^n)}E(v) \ \leq \ \ep.
    \end{equation}
\end{definition}

Since a global minimum is automatically an $\ep$-quasi-minimizer for all $\ep > 0$, we are guaranteed existence of an $\ep$-quasi-minimum whenever Proposition \ref{Prop: QuasiConvexityEnergy} applies.


\subsection{Cost functionals}\label{Subsec: cost}

To provide assumptions on the cost functionals, we first discuss the properties assumed on the admissible class of controls $Z_{\text{ad}}$.

\begin{assumption}[Assumptions on controls]\label{Assump: Zad}
    The admissible set of controls, $Z_{\text{ad}}$, is a closed, nonempty, convex subset of $L^{p'}(\Om; \R^n)$. 
\end{assumption}

\begin{example}[Box constraint]\label{Rmk: box}
    One of the standard choices for $Z_{\text{ad}}$ is the \textbf{box constraint}, i.e., a set of the form
    \begin{equation}\label{Eq: box}
        Z_{\text{ad}} \ := \ \{z \in L^{p'}(\Om; \R^n)  \ | \ \mathfrak{a}(x) \preceq z(x) \preceq \mathfrak{b}(x) \ \text{a.e.} \ x \in \Om\},
	\end{equation}
    where $\mathfrak{a}$ and $\mathfrak{b}$ are fixed vector fields in $L^{\infty}(\Om; \R^n)$.
\end{example}

Now, the main difference as compared to  \cite{cueto2026localization} is that we allow the nonlocal cost functional $\mathcal{F}_{\de, s}(\cdot, \cdot)$ to depend on the nonlocal gradient \eqref{Eq: NLGrad} (and the local cost functional $\mathcal{F}_{\loc}(\cdot, \cdot)$ to depend on the classical gradient).

 \begin{assumption}[Assumptions on cost functionals]\label{Assump: costFuncAssumptions}
        Our nonlocal cost functional $\mathcal{F}_{\de, s}: L^p(\Om; \R^n) \times Z_{\text{ad}} \rightarrow \R$ is of the form
	\begin{equation}\label{Eq: costFuncNL}
		\mathcal{F}_{\de, s}(u, g) \ := \ \int_{\Om}F(x, u(x), D^s_{\de}u(x))dx + \int_{\Om} \La(x) |g(x)|^{p'} dx.
	\end{equation}
    Meanwhile, the local cost functional $\mathcal{F}_{\loc}: L^p(\Om; \R^n) \times Z_{\text{ad}}$ is of the form
    \begin{equation}\label{Eq: costFuncL}
		\mathcal{F}_{\loc}(u, g) \ := \ \int_{\Om}F(x, u(x), \grad u(x))dx + \int_{\Om} \La(x) |g(x)|^{p'} dx.
	\end{equation}
    It is possible $\mathcal{F}_{\de, s}(\cdot, \cdot)$ takes the value $+\infty$ if $u \in L^p(\Om; \R^n)\setminus H^{s,p,\de}(\Om; \R^n)$, and similarly for $\mathcal{F}_{\loc}(\cdot, \cdot)$. Additionally, we use the following assumptions on $F$, $\Lambda$, and $g$:
    \begin{enumerate}
     \item $\La \in L^{\infty}(\Om)$ is a strictly positive function, i.e. there exists $\la > 0$ such that $\La(x) \geq \la$ for all $x \in \Om$. In this setting we note that
     \begin{equation}\label{Eq: weightNormLp'}
         \|g\|_{L^{p'}_{\La}(\Om; \R^n)} \ := \ \left(\int_{\Om}|g(x)|^{p'}dx\right)^{\frac{1}{p'}}
     \end{equation}
     defines a norm on $L^{p'}(\Om; \R^n)$.
    \item $F: \Om \times \R^n \times \R^{n \times n} \rightarrow \R$ is a Carathéodory function satisfying the following assumptions:
	\begin{enumerate}
		\item For all $z \in \R^n$ and $A \in \R^{n \times n}$, the mapping $x \mapsto F(x, z, A)$ is measurable;
		\item For all $x \in \Om$ and $z \in \R^n$ the mapping $A \mapsto F(x, z, A)$ is  quasiconvex;
		\item There exist constants $c_F, a_0, C_F > 0$ such that
		\begin{equation}\label{Eq: CostBounds}
			c_F|A|^p - a_0 \ \leq \ F(x, z, A) \ \leq \ C_F(1 + |z|^p + |A|^p).
		\end{equation}
    \end{enumerate}
    \end{enumerate}
    
    \end{assumption}

	\begin{example}[Desired state]\label{Ex: validCost}
    Consider a cost functional of the form
	\begin{equation}\label{Eq: costFuncSpec}
		\mathcal{F}_{\de, s}(u, g) \ := \ \frac{1}{p}\|u - u_{\des}\|^p_{L^p(\Om; \R^n)} + \frac{\la}{p'}\|g\|^{p'}_{L^{p'}(\Om; \R^n)}
	\end{equation}
    for all $\de > 0$ and $s \in (0, 1)$, where $\mathcal{F}_{\loc}(\cdot, \cdot)$ is defined in the same way. Here $\la > 0$ is a regularization parameter, and $u_{\des} \in L^p(\Om; \R^n)$ is fixed. This cost functional clearly satisfies Assumption \ref{Assump: costFuncAssumptions}.
	\end{example}

    \begin{remark}[Compliance]\label{Ex: Compliance}
    Consider the cost functional of compliance type 
    \begin{equation}\label{Eq: compliance}
    \mathcal{F}_{\de, s}(u, g) \ := \ \mathcal{F}_{\loc}(u, g) \ := \ \int_{\Om}g(x) \cdot u(x)dx + \frac{\la}{p'}\|g\|^{p'}_{L^{p'}_{\La}(\Om; \R^n)},
    \end{equation}
    which was the focus in the nonlocal optimal design papers \cite{andres2023minimization, mengesha2025design}. While \eqref{Eq: compliance} does not satisfy Assumption \ref{Assump: costFuncAssumptions}, the main convergence and existence theorems will still hold because the compliance term exhibits weak-strong convergence under the natural topologies of convergence for the states and controls.

\end{remark}

\subsection{Nonlocal problem formulations}\label{Subsec: setup}

We have two different general problem statements: the nonlocal and local versions of the control problem with an $\ep$-quasi minimization constraint (see Definition \ref{Def: epQuasiMin}). Here we denote $m_{g, \de, s} := \min_{H^{s, p, \de}_0(\Om_{-\de}; \R^n)}\mathcal{W}^{\de, s}_g(\cdot)$.

\begin{problem}[Nonlocal optimal control problem with quasi-minimization constraint]\label{Pr: NLQuasiMinimizers}
    Our nonlocal optimal control problem with an $\ep$-quasi minimization constraint is to solve
    \begin{equation}\label{Eq: stateMinNLQuasi}
			\mathcal{F}_{\de, s}(\overline{u_{\de, s}}, \overline{g_{\de, s}}) \ = \ \min_{(u_{\de, s}, g_{\de, s}) \in \mathbb{T}^{\de, s}_{\ep}}\mathcal{F}_{\de, s}(u_{\de, s}, \ g_{\de, s}),
		\end{equation} 
        where the class of admissible pairs is given by
        \begin{equation}\label{Eq: nonlocalAdmissSetQuasiEp}
    \mathbb{T}^{\de, s}_{\ep} \ := \ \{(u, g) \in H^{s, p, \de}_0(\Om_{-\de}; \R^n) \times Z_{\text{ad}}, \ \mathcal{W}^{\de, s}_g(u) - m_{g, \de, s} \ \leq \ \ep\}.
\end{equation}
\end{problem}


\section{Existence theory}\label{Sec: ExistenceQuasi}

The goal of this section is to prove existence of solutions to Problem \ref{Pr: NLQuasiMinimizers}. Since the energy to be minimized as a constraint may not have a unique minimizer (due to non-convexity), we cannot, as is usually the case for control problems, define a solution mapping. This first lemma, a variant of Lemma 3.6 in \cite{cueto2026localization}, serves as an analogue to the stability estimate that is typically proven for PDEs with unique solutions.

    \begin{lemma}[Boundedness of admissible states]\label{Lem: boundednessAdmissibleStatesNLQuasi}
        Let $W: \Om \times \R^n \times \R^{n \times n} \rightarrow \R$ be an energy density satisfying Assumption \ref{Assump: energyDensityAssump}, and let $\ep > 0$ be fixed. Then, the image of $\mathbb{T}^{\de, s}_{\ep}$ into $H^{s, p, \de}_0(\Om_{-\de}; \R^n)$, i.e., the collection of states which are $\ep$ quasi-minimizers of $\mathcal{W}^{\de, s}_g(\cdot)$ for some $g \in Z_{\text{ad}}$ given by
	\begin{equation}\label{Eq: UNLAdmisQuasi}
		V^{\de, s}_{\ep} \ := \ \{u_{\de, s} \in H^{s, p, \de}_0(\Om_{-\de}; \R^n) \ | \ \ex g_{\de, s} \in Z_{\text{ad}}, \ (u_{\de, s}, g_{\de, s}) \in \mathbb{T}^{\de, s}_{\ep}\},
	\end{equation}
	is a bounded subset of $H^{s, p, \de}_0(\Om_{-\de}; \R^n)$. 
    \end{lemma}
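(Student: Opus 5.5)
The plan is to trap every admissible state between an upper bound on the constrained energy and a coercive lower bound, both uniform in the control $g$. Fix $u \in V^{\de, s}_{\ep}$ with associated control $g \in Z_{\text{ad}}$, so that $\mathcal{W}^{\de, s}_g(u) \leq m_{g, \de, s} + \ep$. Throughout I write $G := \sup_{g \in Z_{\text{ad}}} \|g\|_{L^{p'}(\Om; \R^n)}$.

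First, I bound $m_{g, \de, s}$ from above by testing with $v = 0 \in H^{s, p, \de}_0(\Om_{-\de}; \R^n)$. Since $D^s_\de 0 = 0$ and $\lang g, 0\rang = 0$, the upper growth bound in \eqref{Eq: WBound} gives
\[
m_{g, \de, s} \ \leq \ \mathcal{W}^{\de, s}_g(0) \ \leq \ C_w \Lin^n(\Om).
\]
This bound does not depend on $g$. Hence $\mathcal{W}^{\de, s}_g(u) \leq C_w\Lin^n(\Om) + \ep$.

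Second, I derive the coercive lower bound. The lower growth bound in \eqref{Eq: WBound}, H\"older's inequality, and the nonlocal Poincar\'e inequality (Proposition \ref{Prop: NLPoincare}) together give
\[
\mathcal{W}^{\de, s}_g(u) \ \geq \ c_w \|D^s_\de u\|^p_{L^p} - c_0 \Lin^n(\Om) - C G \|D^s_\de u\|_{L^p}.
\]
Proposition \ref{Prop: NLPoincare} applies because $H^{s,p,\de}_0(\Om_{-\de};\R^n) \subset H^{s,p,\de}_0(\Om;\R^n)$. Set $t := \|D^s_\de u\|_{L^p(\Om; \R^{n\times n})}$ and combine with the first step to obtain
\[
c_w t^p - CGt \ \leq \ (C_w + c_0)\Lin^n(\Om) + \ep .
\]
Since $p > 1$, Young's inequality absorbs the linear term: $CGt \leq \tfrac{c_w}{2}t^p + C'G^{p'}$. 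This yields a bound on $t$ depending only on $c_w, c_0, C_w, \Lin^n(\Om), \ep, G$ and the Poincar\'e constant.

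Third, I control the full norm. Applying Poincar\'e again bounds $\|u\|_{L^p(\Om)}$. Because $u$ vanishes outside $\Om_{-\de}$, we have $\|u\|_{L^p(\Om_\de)} = \|u\|_{L^p(\Om)}$. This bounds $\|u\|_{H^{s,p,\de}}$ uniformly over $V^{\de, s}_{\ep}$.

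The main obstacle is the requirement $G < \infty$. Assumption \ref{Assump: Zad} only asks $Z_{\text{ad}}$ to be closed and convex, not bounded. For an unbounded $Z_{\text{ad}}$ the statement can genuinely fail, since large loads produce large minimizers. I would therefore make explicit that $Z_{\text{ad}}$ is bounded in $L^{p'}$, as it is for the box constraint of Example \ref{Rmk: box}, and use that bound for $G$.
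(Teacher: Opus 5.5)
Your proof follows the paper's route (bound $m_{g,\de,s} \leq \mathcal{W}^{\de,s}_g(0) \leq C_w\Lin^n(\Om)$, then use the coercive lower bound in \eqref{Eq: WBound}), and it is in fact more complete: the paper's step \eqref{Eq: UNLAdmisQuasiEq3} silently drops the term $-\lang g, u\rang$, which you correctly absorb via H\"older, the Poincar\'e inequality, and Young. That absorption genuinely needs $Z_{\text{ad}}$ to be bounded in $L^{p'}(\Om;\R^n)$, which is missing from Assumption \ref{Assump: Zad} but is used implicitly in the paper's proof of Theorem \ref{Thm: ExistenceControlNonlocalQuasi}, and your remark that the lemma can fail otherwise is correct (e.g.\ with $W(x,z,A) = |A|^p$ and $g = \lambda g_0$, minimizers scale like $\lambda^{1/(p-1)}$).
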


    \begin{proof}
    Let $u_{\de, s} \in V^{\de, s}_{\ep}$ be arbitrary. Then there exists $g_{\de, s} \in Z_{\text{ad}}$ so that
    \begin{equation}\label{Eq: UNLAdmisQuasiEq1}
        \mathcal{W}^{\de, s}_{g_{\de, s}}(u_{\de, s})  \ \leq \ \min \mathcal{W}^{\de, s}_{g_{\de, s}}(\cdot) + \ep.
    \end{equation}
    Pick $\widehat{u_{\de, s}} \in H^{s, p, \de}_0(\Om_{-\de}; \R^n)$ so that $\min \mathcal{W}^{\de, s}_{g_{\de, s}}(\cdot) = \mathcal{W}^{\de, s}_{g_{\de, s}}(\widehat{u_{\de, s}})$; note this function exists due to Proposition \ref{Prop: QuasiConvexityEnergy}. Moreover, due to the structure of the energy density and \eqref{Eq: WBound}, $\mathcal{W}^{\de, s}_{g_{\de, s}}(\widehat{u_{\de, s}}) \leq C_w|\Om|$, so 
    \begin{equation}\label{Eq: UNLAdmisQuasiEq2}
         \mathcal{W}^{\de, s}_{g_{\de, s}}(u_{\de, s})  \ \leq \ C_w|\Om| + \ep.
    \end{equation}
    Using \eqref{Eq: WBound} again we get
    \begin{equation}\label{Eq: UNLAdmisQuasiEq3}
        c_w\|D^s_{\de}u_{\de, s}\|^p_{L^p(\Om; \R^{n \times n})} \ \leq \ (c_0 + C_w)|\Om| + \ep,
    \end{equation}
    completing the proof.
    \end{proof}

    To carry out the existence proof with the direct method, we also need a weak closedness result for $\mathbb{T}^{\de, s}_{\ep}$ (as a subset of $H^{s, p, \de}(\Om; \R^n) \times L^{p'}(\Om; \R^n)$). 

 \begin{lemma}[Weak closedness]\label{Lem: NLClosureOfMinimizationQuasi}
         Let $W: \Om \times \R^n \times \R^{n \times n} \rightarrow \R$ be an energy density satisfying Assumption \eqref{Assump: energyDensityAssump}. Then the set $\mathbb{T}^{\de, s}_{\ep}$ is weakly closed in $H^{s, p, \de}(\Om; \R^n) \times L^{p'}(\Om; \R^n)$.
    \end{lemma}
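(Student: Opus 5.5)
We argue sequentially: since $H^{s,p,\de}(\Om;\R^n)$ and $L^{p'}(\Om;\R^n)$ are reflexive and separable, it suffices to take a sequence $(u_j,g_j)\in\mathbb{T}^{\de,s}_{\ep}$ with $u_j\rightharpoonup u$ weakly in $H^{s,p,\de}(\Om;\R^n)$ and $g_j\rightharpoonup g$ weakly in $L^{p'}(\Om;\R^n)$, and to show $(u,g)\in\mathbb{T}^{\de,s}_{\ep}$. Membership of the components is immediate. By Proposition \ref{Prop: CompactEmbedding} we get $u\in H^{s,p,\de}_0(\Om_{-\de};\R^n)$ and $u_j\to u$ strongly in $L^p(\Om;\R^n)$. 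Since $Z_{\text{ad}}$ is closed and convex (Assumption \ref{Assump: Zad}), Mazur's lemma makes it weakly closed, so $g\in Z_{\text{ad}}$. What remains is the inequality $\mathcal{W}^{\de,s}_g(u)-m_{g,\de,s}\le\ep$.

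\emph{Lower bound on the left side.} Write $\mathcal{W}^{\de,s}_{g_j}(u_j)=\int_\Om W(x,u_j,D^s_\de u_j)\,dx-\lang g_j,u_j\rang$. Because $u_j\to u$ strongly in $L^p$ and $g_j\rightharpoonup g$ weakly in $L^{p'}$, the pairing converges: $\lang g_j,u_j\rang\to\lang g,u\rang$. For the integral term, I would invoke the weak lower semicontinuity of $v\mapsto\int_\Om W(x,v,D^s_\de v)\,dx$ on $H^{s,p,\de}_0(\Om_{-\de};\R^n)$. This is exactly the ingredient behind Proposition \ref{Prop: QuasiConvexityEnergy}, i.e.\ \cite[Corollary 2]{cueto2023variational}, which rests on quasiconvexity, the Carathéodory property, the growth \eqref{Eq: WBound} and $\Lin^n(\pa\Om_{-\de})=0$. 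It gives
\[
\mathcal{W}^{\de,s}_g(u)\le\liminf_{j}\mathcal{W}^{\de,s}_{g_j}(u_j).
\]
This is the main obstacle: lower semicontinuity for the nonlocal gradient is not Morrey's classical theorem. I would justify it via the translation of $D^s_\de$ into a classical gradient, $D^s_\de v=\grad(Q^s_\de * v)$, as in \cite{bellido2023non, cueto2023variational}, and then quote the cited corollary rather than reprove it.

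\emph{Upper semicontinuity of the minimal value.} We need $\limsup_j m_{g_j,\de,s}\le m_{g,\de,s}$. Fix a minimizer $\hat u$ of $\mathcal{W}^{\de,s}_g$, which exists by Proposition \ref{Prop: QuasiConvexityEnergy}. Then
\[
m_{g_j,\de,s}\le\mathcal{W}^{\de,s}_{g_j}(\hat u)=\int_\Om W(x,\hat u,D^s_\de\hat u)\,dx-\lang g_j,\hat u\rang\longrightarrow\mathcal{W}^{\de,s}_g(\hat u)=m_{g,\de,s},
\]
since $\hat u$ is fixed and $g_j\rightharpoonup g$ weakly. Only the linear term depends on $j$, so this step is easy.

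\emph{Conclusion.} Combining the two steps,
\[
\mathcal{W}^{\de,s}_g(u)-m_{g,\de,s}\le\liminf_j\mathcal{W}^{\de,s}_{g_j}(u_j)-\limsup_j m_{g_j,\de,s}\le\liminf_j\bigl(\mathcal{W}^{\de,s}_{g_j}(u_j)-m_{g_j,\de,s}\bigr)\le\ep.
\]
Hence $(u,g)\in\mathbb{T}^{\de,s}_{\ep}$, and the set is weakly closed.
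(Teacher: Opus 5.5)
Your proof is correct and has the same overall scheme as the paper: lower semicontinuity of $\mathcal{W}^{\de,s}_{g_j}(u_j)$, a comparison of the minimal values, then passing to the limit in $\mathcal{W}^{\de,s}_{g_j}(u_j)-m_{g_j,\de,s}\le\ep$. The difference is that you handle the minimal values more economically.

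The paper proves the full convergence \eqref{Eq: NLClosureOfMinimizationQuasiEq4}. It takes minimizers $v_j$ of $\mathcal{W}^{\de,s}_{g_j}$, bounds them with Lemma \ref{Lem: boundednessAdmissibleStatesNLQuasi}, and extracts a weakly convergent subsequence. It then applies lower semicontinuity a second time to show that the limit $v$ minimizes $\mathcal{W}^{\de,s}_g$, which gives \eqref{Eq: NLClosureOfMinimizationQuasiEq7}. Only after that does it test \eqref{Eq: NLClosureOfMinimizationQuasiEq5} with $w:=v$ to obtain $\limsup_j m_{g_j,\de,s}\le m_{g,\de,s}$.

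You saw that the minimal value enters the constraint with a minus sign, so only this last one-sided bound is needed. You also saw that any minimizer $\hat u$ from Proposition \ref{Prop: QuasiConvexityEnergy} can play the role of $v$. This removes the compactness of minimizers, the subsequence extraction, and any reliance on a uniform bound for the $v_j$. That bound is worth avoiding: the proof of Lemma \ref{Lem: boundednessAdmissibleStatesNLQuasi} drops the forcing term $\lang g,u\rang$, so it is really only uniform over bounded sets of controls. Such a bound does hold along a weakly convergent $g_j$, so the paper's argument can be repaired, but yours does not need it.

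What the paper's longer route buys is a stability statement of independent interest: $g\mapsto m_{g,\de,s}$ is weakly sequentially continuous, and weak limits of minimizers are minimizers. You are also more explicit than the paper about $g\in Z_{\text{ad}}$ (via Mazur), about $u\in H^{s,p,\de}_0(\Om_{-\de};\R^n)$, and about the convergence of $\lang g_j,u_j\rang$ through the compact embedding.

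Two minor corrections.

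\emph{The citation for lower semicontinuity.} The result you need is the characterization \cite[Theorem 5]{cueto2023variational}, which the paper invokes in Lemma \ref{Lem: lscDirect}. The existence statement \cite[Corollary 2]{cueto2023variational} is built on it but is not the lower semicontinuity result itself.

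\emph{The reduction to sequences.} Reflexivity plus separability only make the weak topology metrizable on bounded sets. So they do not by themselves upgrade weak sequential closedness to weak closedness for $\mathbb{T}^{\de,s}_{\ep}$, whose projection onto the controls is all of $Z_{\text{ad}}$ and may be unbounded. Like the paper, you actually prove weak sequential closedness, which is all that Theorem \ref{Thm: ExistenceControlNonlocalQuasi} uses.
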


    \begin{proof}
    Let $\{(u_j, g_j)\}^{\infty}_{j = 1} \subset \mathbb{T}^{\de, s}_{\ep}$ be a sequence such that $u_j \rightharpoonup u$ weakly in $H^{s, p, \de}(\Om; \R^n)$ and $g_j \rightharpoonup g$ weakly in $L^{p'}(\Om; \R^n)$. We must show that $(u, g) \in \mathbb{T}^{\de, s}_{\ep}$, i.e.,
    \begin{equation}\label{Eq: NLClosureOfMinimizationQuasiEq1}
        \mathcal{W}^{\de, s}_g(u) - \min \mathcal{W}^{\de, s}_g(\cdot) \ \leq \ \ep.
    \end{equation}
    To this end, notice that for all $j \in \N^+$ we have
     \begin{equation}\label{Eq: NLClosureOfMinimizationQuasiEq2}
        \mathcal{W}^{\de, s}_{g_j}(u_j) - \min \mathcal{W}^{\de, s}_{g_j}(\cdot) \ \leq \ \ep.
    \end{equation}
    Due to the convergence properties of our sequences and weak lower semi-continuity of $W$, we have
    \begin{equation}\label{Eq: NLClosureOfMinimizationQuasiEq3}
        \liminf_{j \rightarrow \infty}\mathcal{W}^{\de, s}_{g_j}(u_j) \ \geq \  \mathcal{W}^{\de, s}_g(u).  
    \end{equation}
   Now we show that
 \begin{equation}\label{Eq: NLClosureOfMinimizationQuasiEq4}
        \lim_{j \rightarrow \infty}\min \mathcal{W}^{\de, s}_{g_j}(\cdot) \ = \ \min \mathcal{W}^{\de, s}_g(\cdot).
    \end{equation}
    Due to Proposition \ref{Prop: QuasiConvexityEnergy} applied for each $j \in \N^+$, there exists a sequence $\{v_j\}^{\infty}_{j = 1} \subset H^{s, p, \de}_0(\Om_{-\de}; \R^n)$ so that $\mathcal{W}^{\de,  s}_{g_j}(v_j) = \min\mathcal{W}^{\de, s}_{g_j}(\cdot)$. Then by Lemma \ref{Lem: boundednessAdmissibleStatesNLQuasi}, the family $\{v_j\}^{\infty}_{j = 1}$ is bounded in $H^{s, p, \de}_0(\Om_{-\de}; \R^n)$.  By reflexivity, there exists a (not relabeled) sub-sequence $\{v_j\}^{\infty}_{j = 1}$ and a limit $v \in H^{s, p, \de}_0(\Om_{-\de}; \R^n)$ so that $v_j \rightharpoonup v$ weakly in $H^{s, p, \de}(\Om; \R^n)$. Moreover, for each $j \in \N^+$ and $w \in H^{s, p, \de}_0(\Om_{-\de}; \R^n)$ arbitrary we have
    \begin{equation}\label{Eq: NLClosureOfMinimizationQuasiEq5}
        \mathcal{W}^{\de, s}_{g_j}(v_j) \ \leq \ \mathcal{W}^{\de, s}_{g_j}(w).
    \end{equation}
    Then by weak lower semi-continuity of $W$ and the weak convergence $g_j \rightharpoonup g$ in $L^{p'}(\Om; \R^n)$, we have
    \begin{equation}\label{Eq: NLClosureOfMinimizationQuasiEq6}
        \mathcal{W}^{\de,  s}_g(v) \ \leq \ \mathcal{W}^{\de, s}_g(w).
    \end{equation}    
    This means $\min \mathcal{W}^{\de, s}_g(\cdot) = \mathcal{W}^{\de, s}_g(v)$, and we conclude
    \begin{equation}\label{Eq: NLClosureOfMinimizationQuasiEq7}
       \min \mathcal{W}^{\de, s}_g(\cdot) \ \leq \ \liminf_{j \rightarrow \infty}\min \mathcal{W}^{\de, s}_{g_j}(\cdot).
    \end{equation}
To make \eqref{Eq: NLClosureOfMinimizationQuasiEq7} an equality, we may apply \eqref{Eq: NLClosureOfMinimizationQuasiEq5} by setting $w := v$ and sending $j \rightarrow \infty$, hence proving \eqref{Eq: NLClosureOfMinimizationQuasiEq4}. Finally, combining \eqref{Eq: NLClosureOfMinimizationQuasiEq2}, \eqref{Eq: NLClosureOfMinimizationQuasiEq3}, and \eqref{Eq: NLClosureOfMinimizationQuasiEq4} gives \eqref{Eq: NLClosureOfMinimizationQuasiEq1}, as desired.
    \end{proof}

    Third, we need a lower semi-continuity result for the density function $F$ (introduced in Assumption \ref{Assump: costFuncAssumptions}).

    \begin{lemma}\label{Lem: lscDirect}
If $\de > 0$ and $s \in (0, 1)$ are fixed and $\{u_j\}^{\infty}_{j = 1} \subset H^{s, p, \de}_0(\Om_{-\de}; \R^n)$ is a sequence such that $u_j \rightarrow u$ strongly in $L^p(\Om; \R^n)$ and $D^s_{\de} u_j \rightharpoonup D^s_{\de} u$ weakly in $L^p(\Om; \R^{n \times n})$, then
            \begin{equation}\label{Eq: lscF}
                \liminf_{j \rightarrow \infty}\int_{\Om}F(x, u_j(x), D^s_{\de}u_j(x))dx \ \geq \ \int_{\Om}F(x, u(x), D^s_{\de}u(x))dx.
            \end{equation}
    \end{lemma}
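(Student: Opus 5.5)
The plan is to reduce \eqref{Eq: lscF} to the classical weak lower semicontinuity of quasiconvex integral functionals on $W^{1,p}$. The tool for this reduction is the structural identity from the nonlocal calculus of \cite{bellido2023non, cueto2023variational}: there is a kernel $Q^s_{\de} \in L^1_{\loc}(\R^n)$ such that
\begin{equation*}
D^s_{\de} v \ = \ \grad (Q^s_{\de} * v)
\end{equation*}
for every $v \in H^{s,p,\de}(\Om;\R^n)$, where the convolution operator maps $H^{s,p,\de}$ into $W^{1,p}_{\loc}$. I would invoke this identity directly rather than reprove it. Set $w_j := Q^s_{\de} * u_j$ and $w := Q^s_{\de} * u$. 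The hypothesis $D^s_{\de}u_j \rightharpoonup D^s_{\de}u$ then says exactly that $\grad w_j \rightharpoonup \grad w$ weakly in $L^p(\Om;\R^{n\times n})$. Consequently the functional in \eqref{Eq: lscF} takes the decoupled form
\begin{equation*}
\int_{\Om} F(x, u_j(x), \grad w_j(x))\,dx ,
\end{equation*}
with $u_j \to u$ strongly in $L^p$ and $\grad w_j \rightharpoonup \grad w$ weakly in $L^p$.

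Next I would prove lower semicontinuity for this decoupled functional using gradient Young measures. First I pass to a subsequence that realizes the $\liminf$. By the Kinderlehrer--Pedregal theory, since $\{\grad w_j\}$ is bounded in $L^p$ with $p>1$, a further subsequence generates a $W^{1,p}$-gradient Young measure $\{\nu_x\}_{x\in\Om}$ with barycenter $\grad w(x)$. Because $u_j \to u$ strongly, hence in measure, the pair $(u_j, \grad w_j)$ generates the product Young measure $\delta_{u(x)} \otimes \nu_x$. The lower bound $F \geq -a_0$ from \eqref{Eq: CostBounds} makes the negative parts trivially equi-integrable. Since $F$ is Carathéodory, the fundamental theorem on Young measures then gives
\begin{equation*}
\liminf_{j\to\infty} \int_\Om F(x,u_j,\grad w_j)\,dx \ \geq \ \int_\Om \int_{\R^{n\times n}} F(x,u(x),A)\,d\nu_x(A)\,dx .
\end{equation*}
Finally, quasiconvexity of $A \mapsto F(x,u(x),A)$ together with the upper bound in \eqref{Eq: CostBounds} yields the Jensen-type inequality for gradient Young measures,
\begin{equation*}
\int F(x,u(x),A)\,d\nu_x(A) \ \geq \ F(x,u(x),\grad w(x)) \quad \text{for a.e. } x .
\end{equation*}
Substituting $\grad w = D^s_{\de}u$ gives \eqref{Eq: lscF}.

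The main obstacle is this Jensen step with $x$- and $z$-dependence. The Kinderlehrer--Pedregal characterization is cleanest for integrands depending only on $A$. I would handle the dependence by freezing $(x, u(x))$: the map $A \mapsto F(x,u(x),A)$ is a fixed quasiconvex function with $p$-growth, and the characterization holds for such functions at a.e. $x$. Measurability in $x$ is supplied by the Carathéodory assumption. A fallback is to cite the Acerbi--Fusco / Fonseca--Leoni lower semicontinuity theorem for $\int f(x,u_j,\grad w_j)$ with $u_j$ converging in measure. This is essentially the route taken in \cite[Corollary 2]{cueto2023variational} for $W$, and the same argument applies verbatim with $F$ in place of $W$.

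A secondary technical point is that $w_j$ is only in $W^{1,p}_{\loc}$ and is not compactly supported. This is harmless, because lower semicontinuity is a local statement on $\Om$ and the Young-measure argument needs only $L^p(\Om)$-boundedness of $\grad w_j$.
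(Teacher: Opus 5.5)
Your proposal is correct and follows essentially the paper's route. The paper's proof is only a citation of the characterization theorem \cite[Theorem 5]{cueto2023variational}, whose mechanism is the translation $D^s_{\de} v = \grad (Q^s_{\de} * v)$ followed by classical weak lower semicontinuity for quasiconvex integrands; your product Young measure argument ($\delta_{u(x)} \otimes \nu_x$ from strong convergence of $u_j$, then Jensen at a.e.\ frozen $(x,u(x))$) spells out how the decoupled $u$-dependence is handled, with $u_j$ in the $z$-slot but $\grad(Q^s_{\de}*u_j)$ in the gradient slot, which the paper leaves implicit.
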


    \begin{proof}
        This follows from the conditions on $F$ given in Assumption \ref{Assump: costFuncAssumptions} and the characterization result \cite[Theorem 5]{cueto2023variational}. 
    \end{proof}

Finally, we may state and prove the main existence result.

\begin{theorem}[Existence for non-local optimal control problem]\label{Thm: ExistenceControlNonlocalQuasi}
        Suppose $\de > 0$, $s \in (0, 1)$, and $\ep > 0$ are fixed, and that $\mathcal{F}_{\de, s}: L^p(\Om; \R^n) \times Z_{\text{ad}} \rightarrow \R$ satisfies Assumption \ref{Assump: costFuncAssumptions}. Then there exists a pair $(\overline{u_{\de, s}}, \overline{g_{\de, s}}) \in \mathbb{T}^{\de, s}_{\ep}$ that solves Problem \ref{Pr: NLQuasiMinimizers}.
    \end{theorem}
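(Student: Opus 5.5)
We argue by the direct method of the calculus of variations on the admissible set $\mathbb{T}^{\de, s}_{\ep}$. First, $\mathbb{T}^{\de, s}_{\ep}$ is nonempty: fix any $g_0 \in Z_{\text{ad}}$ (possible by Assumption \ref{Assump: Zad}). Proposition \ref{Prop: QuasiConvexityEnergy} gives a minimizer $u_0$ of $\mathcal{W}^{\de, s}_{g_0}(\cdot)$, and $(u_0, g_0) \in \mathbb{T}^{\de, s}_{\ep}$. Hence the infimum $\mathcal{J} := \inf_{\mathbb{T}^{\de, s}_{\ep}}\mathcal{F}_{\de, s}$ satisfies $\mathcal{J} \le \mathcal{F}_{\de, s}(u_0, g_0) < \infty$, using the upper bound in \eqref{Eq: CostBounds} and $u_0 \in H^{s,p,\de}$. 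Also $\mathcal{J} \geq -a_0|\Om| > -\infty$ by the lower bound in \eqref{Eq: CostBounds} and $\La \geq \la > 0$. We then take a minimizing sequence $\{(u_j, g_j)\} \subset \mathbb{T}^{\de, s}_{\ep}$ with $\mathcal{F}_{\de, s}(u_j, g_j) \to \mathcal{J}$.

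Next we establish compactness. Since $\La \geq \la$ and the $F$-term is bounded below by $-a_0|\Om|$, we have
\[
\la\|g_j\|^{p'}_{L^{p'}(\Om; \R^n)} \leq \mathcal{F}_{\de, s}(u_j, g_j) + a_0|\Om|,
\]
so $\{g_j\}$ is bounded in $L^{p'}(\Om; \R^n)$. Independently, Lemma \ref{Lem: boundednessAdmissibleStatesNLQuasi} bounds $\|D^s_\de u_j\|_{L^p}$. With Proposition \ref{Prop: NLPoincare}, this bounds $\{u_j\}$ in $H^{s,p,\de}_0(\Om_{-\de}; \R^n)$.

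The one point to check in that lemma is that $\min \mathcal{W}^{\de,s}_{g}$ is bounded above uniformly. This follows by testing with $v = 0$, which gives $\mathcal{W}^{\de,s}_g(0) \le C_w|\Om|$. The lemma's final estimate also tacitly needs to absorb the term $\lang g_j, u_j\rang$. With $g_j$ bounded in $L^{p'}$, I would absorb it via Hölder, Poincaré and Young:
\[
|\lang g_j, u_j\rang| \le C\|g_j\|_{L^{p'}}\|D^s_\de u_j\|_{L^p} \le \tfrac{c_w}{2}\|D^s_\de u_j\|^p_{L^p} + C'\|g_j\|^{p'}_{L^{p'}}.
\]
This yields a bound uniform in $j$.

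By reflexivity, we extract a subsequence with $u_j \rightharpoonup \bar u$ in $H^{s,p,\de}(\Om;\R^n)$ and $g_j \rightharpoonup \bar g$ in $L^{p'}(\Om;\R^n)$. Since $Z_{\text{ad}}$ is closed and convex, it is weakly closed, so $\bar g \in Z_{\text{ad}}$. Lemma \ref{Lem: NLClosureOfMinimizationQuasi} then gives $(\bar u, \bar g) \in \mathbb{T}^{\de, s}_{\ep}$.

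Finally, we pass to the limit. Proposition \ref{Prop: CompactEmbedding} gives $u_j \to \bar u$ strongly in $L^p(\Om;\R^n)$. Weak convergence in $H^{s,p,\de}$ implies $D^s_\de u_j \rightharpoonup D^s_\de \bar u$ in $L^p$, because $u \mapsto D^s_\de u$ is a bounded linear map into $L^p$. Lemma \ref{Lem: lscDirect} therefore handles the $F$-term. The control term $g \mapsto \int_\Om \La |g|^{p'}\,dx$ is convex and strongly continuous on $L^{p'}$, because $\La \in L^\infty$; it is therefore weakly lower semicontinuous. Summing the two liminf inequalities gives $\mathcal{F}_{\de,s}(\bar u, \bar g) \le \mathcal{J}$, so $(\bar u, \bar g)$ is a minimizer.

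The main obstacle is the uniform bound on the states: the coupling term $\lang g, u\rang$ must be controlled using the $L^{p'}$ bound on the controls from the cost. The remaining steps are routine given the lemmas already proved.
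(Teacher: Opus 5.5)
Your proof is correct and follows the same direct-method outline as the paper: a uniform lower bound, a minimizing sequence, bounds on the states, reflexivity, Lemma~\ref{Lem: NLClosureOfMinimizationQuasi} for admissibility of the limit, and Lemma~\ref{Lem: lscDirect} plus weak lower semicontinuity of the control term.

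The one substantive difference is where the bound on the controls comes from. The paper simply asserts that $Z_{\text{ad}}$ is a bounded subset of $L^{p'}(\Om;\R^n)$. Assumption~\ref{Assump: Zad} does not provide this, although it holds, e.g., for the box constraint of Example~\ref{Rmk: box}. You instead extract $\sup_j\|g_j\|_{L^{p'}}<\infty$ from $\La\geq\la>0$ along the minimizing sequence. You combine this with a H\"older--Poincar\'e--Young absorption of $\lang g_j,u_j\rang$, a term the proof of Lemma~\ref{Lem: boundednessAdmissibleStatesNLQuasi} silently drops. Together these make the theorem valid for unbounded closed convex $Z_{\text{ad}}$, which the paper's argument does not cover.

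Two small remarks. First, for the minimizing sequence you do not need that lemma at all: the lower bound in \eqref{Eq: CostBounds} directly gives $c_F\|D^s_\de u_j\|^p_{L^p}+\la\|g_j\|^{p'}_{L^{p'}}\leq\mathcal{F}_{\de,s}(u_j,g_j)+a_0|\Om|$. Second, your absorption is genuinely needed inside the proof of Lemma~\ref{Lem: NLClosureOfMinimizationQuasi}, which you cite without comment. That proof bounds the exact minimizers $v_j$ of $\mathcal{W}^{\de,s}_{g_j}$ via Lemma~\ref{Lem: boundednessAdmissibleStatesNLQuasi}. The same patch works there because the weakly convergent sequence $\{g_j\}$ is bounded.
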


    \begin{proof}
        The first step is to prove that $\mathcal{F}_{\de, s}(\cdot, \cdot)$ is bounded from below uniformly on $\mathbb{T}^{\de, s}_{\ep}$. Let $(u, g) \in \mathbb{T}^{\de, s}_{\ep}$ be arbitrary and then due to \eqref{Eq: CostBounds},
        \begin{equation}\label{Eq: ExistenceControlNonlocalQuasiEq1}
            \mathcal{F}_{\de, s}(u, g) \ \geq \ c_F\|D^s_{\de}u\|^p_{L^p(\Om; \R^{n \times n})} - a_0|\Om|.
        \end{equation}
        The right-hand side of \eqref{Eq: ExistenceControlNonlocalQuasiEq1} is uniformly bounded from below due to Lemma \ref{Lem: boundednessAdmissibleStatesNLQuasi}. Denote $m := \inf_{\mathbb{T}^{\de, s}_{\ep}}\mathcal{F}_{\de, s}(\cdot, \cdot)$ and let $\{(u_j, g_j)\}^{\infty}_{j = 1} \subset \mathbb{T}^{\de, s}_{\ep}$ be a sequence such that $\lim_{j \rightarrow \infty}\mathcal{F}_{\de, s}(u_j, g_j) \ = \ m$. Again using Lemma \ref{Lem: boundednessAdmissibleStatesNLQuasi}, the sequence $\{u_j\}^{\infty}_{j = 1}$ is a bounded subset of $H^{s, p, \de}_0(\Om_{-\de}; \R^n)$. Thus by reflexivity there exists a $\overline{u_{\de}} \in H^{s, p, \de}_0(\Om_{-\de}; \R^n)$ so that $u_j \rightharpoonup \overline{u_{\de, s}}$ weakly in $H^{s, p, \de}(\Om; \R^n)$. Furthermore, $\{g_j\}^{\infty}_{j = 1}$ is a subset of $Z_{\text{ad}}$, which is itself a bounded subset of $L^{p'}(\Om; \R^n)$, so there exists a $\overline{g_{\de, s}} \in Z_{\text{ad}}$ and a further (not relabeled) sub-sequence so that $g_j \rightharpoonup \overline{g_{\de, s}}$ weakly in $L^{p'}(\Om; \R^n)$. 

        Due to Lemma \ref{Lem: NLClosureOfMinimizationQuasi}, we have that $(\overline{u_{\de, s}}, \overline{g_{\de, s}}) \in \mathbb{T}^{\de, s}_{\ep}$. Moreover, \eqref{Eq: weightNormLp'} defines a norm on $L^{p'}(\Om; \R^n)$, so the regularization term of the cost functional exhibits weak lower semi-continuity. This observation combined with Lemma \ref{Lem: lscDirect} yields
        \begin{equation}\label{Eq: ExistenceControlNonlocalQuasiEq2}
            m \ = \ \lim_{j \rightarrow \infty}\mathcal{F}_{\de, s}(u_j, g_j) \ \geq \ \mathcal{F}_{\de, s}(\overline{u_{\de, s}}, \overline{g_{\de, s}}) \ \geq \ m,
        \end{equation}
        which means that $(\overline{u_{\de, s}}, \overline{g_{\de, s}})$ is a solution to Problem \ref{Pr: NLQuasiMinimizers}.
    \end{proof}


\section{Asymptotic Analysis}\label{Sec: quasiMinimizersAsymptotics}

In this section we detail our convergence results for Problem \ref{Pr: NLQuasiMinimizers}, both when $s \in (0, 1)$ is fixed and $\de \rightarrow 0^+$; and when $\de > 0$ is fixed and $s \rightarrow 1^-$. In both cases we expect to approximate Problem \ref{Pr: LQuasiMinimizers}. However, for Problem \ref{Pr: NLQuasiMinimizers} we assume that admissible states vanish on $\Om_{\de}\setminus \Om_{-\de}$, and this set shrinks when we take $\de \rightarrow 0^+$ but not when we take $s \rightarrow 1^-$. Thus the underlying domain where states for Problem \ref{Pr: LQuasiMinimizers} vanish depends on which limit is taken; we use $\widetilde{\Om}$ to signify this domain in both asymptotic regimes:
\begin{itemize}
    \item In Subsection \ref{Subsec: quasiMinimizersAsymptoticss->1}, take $\widetilde{\Om} := \Om_{-\de}$.
    \item In Subsection \ref{Subsec: QuasiMinsConvDe->0}, take $\widetilde{\Om} := \Om$.
\end{itemize}


\subsection{Local problem formulation}\label{Subsec: LProblemFormulation}

We first need a local version of \eqref{Eq: NLEnergy}. We define $\mathcal{W}^{\loc}_g: L^p(\widetilde{\Om}; \R^n) \rightarrow \R \cup \{+\infty\}$, for $g\in Z_{\text{ad}}$ fixed, as
	\begin{equation}\label{Eq: GenEnergyL}
		\mathcal{W}^{\loc}_g(u)\ := \ \begin{cases}
			\int_{\widetilde{\Om}}W(x, u(x), \grad u(x))dx - \lang g, u\rang, \ u \in W^{1, p}(\widetilde{\Om}; \R^n) \\
			+\infty  \ \text{otherwise},
		\end{cases}
	\end{equation}
    where $W$ is given as in Subsection \ref{Subsec: energy}.

Now, as a counterpart to Problem \ref{Pr: NLQuasiMinimizers}, we introduce a local version of the control problem with an $\ep$-quasi minimization constraint (see Definition \ref{Def: epQuasiMin}). Here we denote $m_{g, \loc} := \min_{W^{1, p}_0(\widetilde{\Om}; \R^n)}\mathcal{W}^{\loc}_g(\cdot)$.

\begin{problem}[Local optimal control problem with quasi-minimization constraint]\label{Pr: LQuasiMinimizers}
    Our local optimal control problem with an $\ep$-quasi minimization constraint is to solve
    \begin{equation}\label{Eq: stateMinLQuasi}
			\mathcal{F}_{\loc}(\overline{u}, \overline{g}) \ = \ \min_{(u, g) \in \mathbb{T}^{\loc}_{\ep}}\mathcal{F}_{\loc}(u_{\de, s}, g_{\de, s}),
		\end{equation} 
        where the class of admissible pairs is given by
        \begin{equation}\label{Eq: localAdmissSetQuasiEp}
    \mathbb{T}^{\loc}_{\ep} \ := \ \{(u, g) \in W^{1, p}_0(\widetilde{\Om}; \R^n) \times Z_{\text{ad}}, \ \mathcal{W}^{\loc}_g(u) - m_{g, \loc} \ \leq \ \ep\}
\end{equation}
\end{problem}

 To prove existence of solutions to Problem \ref{Pr: LQuasiMinimizers}, we again need to establish weak lower semi-continuity of the cost functional. This time we use the following result instead of Lemma \ref{Lem: lscDirect}.

 \begin{lemma}\label{Lem: lscDirectLocal}
     If  $\{u_j\}^{\infty}_{j = 1} \subset W^{1, p}(\Om; \R^n)$ is a sequence such that $u_j \rightarrow u$ strongly in $L^p(\Om; \R^n)$ and $\grad u_j \rightharpoonup \grad u$ weakly in $L^p(\Om; \R^{n \times n})$, then
            \begin{equation}\label{Eq: lscFloc}
                \liminf_{j \rightarrow \infty}\int_{\Om}F(x, u_j(x), \grad u_j(x))dx \ \geq \ \int_{\Om}F(x, u(x), \grad u(x))dx.
            \end{equation}
 \end{lemma}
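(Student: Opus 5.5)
This is the classical weak lower semicontinuity theorem for quasiconvex integrands with $p$-growth (Acerbi--Fusco, Marcellini). The plan is to reduce to it, exactly as Lemma \ref{Lem: lscDirect} was reduced to \cite[Theorem 5]{cueto2023variational}.

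\textbf{Step 1: the sequence converges weakly in $W^{1,p}$.} Since $u_j \to u$ strongly in $L^p(\Om;\R^n)$ and $\grad u_j \rightharpoonup \grad u$ weakly in $L^p(\Om;\R^{n\times n})$, the sequence $u_j$ converges weakly to $u$ in $W^{1,p}(\Om;\R^n)$. In particular, $\sup_j \|u_j\|_{W^{1,p}} < \infty$.

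\textbf{Step 2: check the hypotheses of the Acerbi--Fusco theorem.} By Assumption \ref{Assump: costFuncAssumptions}, $F$ has the following properties.
\begin{itemize}
\item It is Carath\'eodory.
\item It is quasiconvex in $A$ for every $(x,z)$.
\item It satisfies the upper growth bound $F(x,z,A)\le C_F(1+|z|^p+|A|^p)$.
\end{itemize}
The lower bound $F \ge -a_0$ (from $c_F|A|^p - a_0 \le F$) gives $0 \le F + a_0 \le C_F(1+|z|^p+|A|^p) + a_0$. This is precisely the setting of \cite{acerbi1984semicontinuity}, which states that $u \mapsto \int_\Om F(x,u,\grad u)\,dx$ is sequentially weakly lower semicontinuous on $W^{1,p}(\Om;\R^n)$.

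\textbf{Step 3: handle the constant shift.} Apply the theorem to $F + a_0$. The additive constant $a_0|\Om|$ is finite because $\Om$ is bounded, so it can be removed from both sides, and \eqref{Eq: lscFloc} follows.

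\textbf{Main obstacle.} The only genuinely hard ingredient is the Acerbi--Fusco theorem itself, which we cite rather than reprove. If a self-contained argument were required, I would proceed as follows.
\begin{enumerate}
\item Use the strong $L^p$ convergence of $u_j$ together with Carath\'eodory continuity in $z$ to freeze the $z$-variable. This step requires a Scorza--Dragoni / Lusin argument, plus uniform continuity estimates of the form $|F(x,z,A)-F(x,z',A)| \le \omega(|z-z'|)(1+|z|^p+|z'|^p+|A|^p)$, which need some care.
\item Use a blow-up (Fonseca--M\"uller) argument. Pass to Young measures generated by $\grad u_j$; these are $W^{1,p}$-gradient Young measures.
\item Apply the Jensen inequality for quasiconvex functions with $p$-growth, using equi-integrability obtained by truncation via the decomposition lemma, to conclude the inequality.
\end{enumerate}
Of these, the decomposition/equi-integrability step is the delicate one. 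Since the paper cites such results freely, the proof will simply invoke the classical theorem.
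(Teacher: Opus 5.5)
Your proposal is correct and follows the paper's route. The paper also proves this lemma by citing the classical weak lower semicontinuity theorem for Carath\'eodory integrands that are quasiconvex in the gradient and have $p$-growth, in the form \cite[Theorem 8.11]{dacorogna2007direct}, which is the textbook version of the Acerbi--Fusco result \cite{acerbi1984semicontinuity} you invoke, and your Steps 1 and 3 (weak $W^{1,p}$ convergence, and the shift by $a_0$ to get a nonnegative integrand) are the routine checks needed to apply it.
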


 Compared to the proof of Lemma \ref{Lem: lscDirect}, instead of using the characterization result \cite[Theorem 5]{cueto2023variational}, we use \cite[Theorem 8.11]{dacorogna2007direct} (which uses the assumption that $F$ is a Carathéodory integrand). 

Now we state the local version of Theorem \ref{Thm: ExistenceControlNonlocalQuasi}; notice this implicitly uses a local version of Theorem \ref{Prop: QuasiConvexityEnergy} for $\mathcal{W}^{\loc}_g(\cdot)$.

        \begin{theorem}[Existence for local problem]\label{Thm: ExistenceControlLocalQuasi}
        Suppose  $\ep > 0$ is fixed, and that $\mathcal{F}_{\loc}: L^p(\Om; \R^n) \times Z_{\text{ad}} \rightarrow \R$ satisfies Assumption \ref{Assump: costFuncAssumptions}. Then there exists a pair $(\overline{u}, \overline{g}) \in \mathbb{T}^{\loc}_{\ep}$ that solves Problem \ref{Pr: LQuasiMinimizers}.
    \end{theorem}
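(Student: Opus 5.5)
The plan is to mirror the proof of Theorem \ref{Thm: ExistenceControlNonlocalQuasi} step by step, replacing each nonlocal ingredient by its classical counterpart on $W^{1,p}_0(\widetilde{\Om};\R^n)$.

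\textbf{Local analogues of the auxiliary lemmas.} First, for each fixed $g \in Z_{\text{ad}}$, Morrey's theorem gives a minimizer of $\mathcal{W}^{\loc}_g(\cdot)$ over $W^{1,p}_0(\widetilde{\Om};\R^n)$, so $m_{g,\loc}$ is attained. This uses quasiconvexity of $W$, the growth bounds \eqref{Eq: WBound}, and the classical Poincaré inequality to obtain coercivity, together with the linear term $\lang g,u\rang$, which is bounded by $\|g\|_{L^{p'}}C\|\grad u\|_{L^p}$ and hence absorbed because $p>1$.

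Second, I prove the local version of Lemma \ref{Lem: boundednessAdmissibleStatesNLQuasi}: the set of admissible states for $\mathbb{T}^{\loc}_\ep$ is bounded in $W^{1,p}_0$. Testing the minimum with $u=0$ gives $m_{g,\loc}\le C_w|\widetilde{\Om}|$. An $\ep$-quasi-minimizer therefore satisfies
\[
c_w\|\grad u\|_p^p - c_0|\widetilde{\Om}| - \|g\|_{p'}C_P\|\grad u\|_p \le C_w|\widetilde{\Om}|+\ep .
\]
Here one needs $\|g\|_{L^{p'}}$ uniformly bounded on $Z_{\text{ad}}$. As in the nonlocal proof, I treat $Z_{\text{ad}}$ as bounded, e.g. a box constraint. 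Alternatively, along a minimizing sequence the regularization term $\int\La|g|^{p'}\ge\la\|g\|^{p'}_{p'}$ bounds $g$, which is the safer route. In either case Young's inequality yields the bound.

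Third, I prove the local version of Lemma \ref{Lem: NLClosureOfMinimizationQuasi}: weak closedness of $\mathbb{T}^{\loc}_\ep$ in $W^{1,p}\times L^{p'}$. The argument takes $(u_j,g_j)\rightharpoonup(u,g)$. Rellich compactness gives $u_j\to u$ strongly in $L^p$, so $\lang g_j,u_j\rang\to\lang g,u\rang$. Combined with weak lower semicontinuity of $\int W(x,u,\grad u)$ (via \cite[Theorem 8.11]{dacorogna2007direct}, as in Lemma \ref{Lem: lscDirectLocal} applied to $W$), this gives $\liminf \mathcal{W}^{\loc}_{g_j}(u_j)\ge\mathcal{W}^{\loc}_g(u)$. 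For the minimum values, choose minimizers $v_j$ for $g_j$; these are bounded, so up to a subsequence $v_j\rightharpoonup v$. Lower semicontinuity gives $\mathcal{W}^{\loc}_g(v)\le \liminf\mathcal{W}^{\loc}_{g_j}(w)=\mathcal{W}^{\loc}_g(w)$ for every $w$, where the last limit holds because the linear term converges. Hence $v$ is a minimizer for $g$. Finally, choosing a fixed minimizer $w$ for $g$ shows $\limsup m_{g_j,\loc}\le m_{g,\loc}$, which gives $m_{g_j,\loc}\to m_{g,\loc}$. Passing to the limit in $\mathcal{W}^{\loc}_{g_j}(u_j)-m_{g_j,\loc}\le\ep$ then completes the closedness step.

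\textbf{Direct method.} The cost $\mathcal{F}_{\loc}$ is bounded below on $\mathbb{T}^{\loc}_\ep$ by \eqref{Eq: CostBounds}. Take a minimizing sequence $(u_j,g_j)$. The states are bounded by the boundedness step and the controls by the regularization term, so reflexivity yields weak limits, with $\overline{g}\in Z_{\text{ad}}$ because $Z_{\text{ad}}$ is closed and convex and hence weakly closed. Rellich gives strong $L^p$ convergence of the states. The closedness step places the limit in $\mathbb{T}^{\loc}_\ep$, and Lemma \ref{Lem: lscDirectLocal}, together with weak lower semicontinuity of the convex term $\int\La|g|^{p'}$, gives $\mathcal{F}_{\loc}(\overline u,\overline g)\le\liminf\mathcal{F}_{\loc}(u_j,g_j)=m$, so $(\overline u,\overline g)$ solves Problem \ref{Pr: LQuasiMinimizers}.

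\textbf{Main obstacle.} The delicate step is the closedness step, specifically the convergence $m_{g_j,\loc}\to m_{g,\loc}$ under only weak convergence of the controls. That convergence rests on the compact embedding, which upgrades $\lang g_j,v_j\rang$ to converge. A secondary concern is making sure the boundedness estimates remain uniform in $g$.
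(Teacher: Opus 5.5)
Your proposal is correct and follows essentially the route the paper intends. The paper gives no separate proof; it treats this theorem as the local analogue of Theorem \ref{Thm: ExistenceControlNonlocalQuasi}, namely the direct method using boundedness of admissible states, weak closedness of $\mathbb{T}^{\loc}_\ep$ via convergence of the minimum values $m_{g_j,\loc}\to m_{g,\loc}$, and Lemma \ref{Lem: lscDirectLocal} in place of Lemma \ref{Lem: lscDirect}.

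Your version is also slightly more careful than the paper's nonlocal template. You keep the linear term $\lang g,u\rang$ in the coercivity estimate, which the paper drops. You also bound the controls of a minimizing sequence through the regularization term, rather than tacitly assuming $Z_{\text{ad}}$ is bounded.
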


    \begin{remark}
        For Theorems \ref{Thm: ExistenceControlNonlocalQuasi} and \ref{Thm: ExistenceControlLocalQuasi}, the lower bound of \eqref{Eq: CostBounds} may be replaced by the more general bound
        \begin{equation}\label{Eq: CostBoundLStringent}
            F(x, z, A) \ \geq \ -C_F(1 + |z|^p + |A|^q)
        \end{equation}
        for some $q \in [1, p)$. The results cited for proving Lemma \ref{Lem: lscDirectLocal} may still be invoked with this lower bound. However, \eqref{Eq: CostBoundLStringent} will not be sufficient for carrying out the asymptotic analysis of Subsections \ref{Subsec: quasiMinimizersAsymptoticss->1} and \ref{Subsec: QuasiMinsConvDe->0}.
    \end{remark}


\subsection{Convergence as $s \rightarrow 1^-$}\label{Subsec: quasiMinimizersAsymptoticss->1}

Before we can study convergence of solutions for Problem \ref{Pr: NLQuasiMinimizers} as $s \rightarrow 1^-$, we must consider convergence of the energies. This convergence will require one additional assumption on the kernel $w_{\de}$.

\begin{assumption}\label{Assump: kernels->1}
    The constant $a_0$ from Assumption \ref{Assump: Kernels} equals $1$.
\end{assumption}

This assumption is used to obtain a Nonlocal Poincaré Inequality with constant independent of $s$ (\cite[Theorem 4]{cueto2023variational}).

\begin{proposition}[Nonlocal Poincaré inequality I]\label{Prop: NLPoincares}
		Let $s \in [0, 1]$, $p \in (1, \infty)$, $\Om \subset \R^n$ be open and bounded. Then there exists a constant $C > 0$ depending only on $\Om$, $\de$, and $p$ such that for all $u \in H^{s, p, \de}_0(\Om_{-\de}; \R^n)$, we have
		\begin{equation}\label{Eq: NLPoincares}
			\|u\|_{L^p(\Om; \R^n)} \ \leq \ C\|D^s_{\de}u\|_{L^p(\Om; \R^{n\times n})}.
		\end{equation}
	\end{proposition}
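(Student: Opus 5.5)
I would argue by contradiction and compactness, splitting according to the limit of the fractional parameter. For each fixed $s\in[0,1]$ the inequality \eqref{Eq: NLPoincares} already holds with some constant $C_s$ by Proposition \ref{Prop: NLPoincare}; for $s=1$ this is the classical Poincaré inequality on $W^{1,p}_0(\Om_{-\de};\R^n)$. So the whole point is uniformity in $s$.

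Suppose no uniform constant exists. Then there are $s_j\in[0,1]$ and $u_j\in H^{s_j,p,\de}_0(\Om_{-\de};\R^n)$ with $\|u_j\|_{L^p(\Om;\R^n)}=1$ and $\|D^{s_j}_{\de}u_j\|_{L^p(\Om;\R^{n\times n})}\to 0$. Passing to a subsequence, $s_j\to s_*\in[0,1]$, and I treat two cases.

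\emph{Case $s_*=1$.} Proposition \ref{Prop: CompactVarys->1} applies, since the gradients are bounded. It gives $u\in W^{1,p}_0$ with $u_j\to u$ strongly in $L^p$ and $D^{s_j}_{\de}u_j\rightharpoonup\grad u$. Weak lower semicontinuity of the norm forces $\grad u=0$, so $u=0$ by the classical Poincaré inequality. This contradicts $\|u\|_{L^p}=\lim\|u_j\|_{L^p}=1$. The normalization $a_0=1$ of Assumption \ref{Assump: kernels->1} enters here: it makes the kernels $\rho^{s}_{\de}$ converge to the correct limit, so that the limit operator is exactly $\grad$ rather than a multiple of it. I would double-check that this is how the cited compactness result is stated.

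\emph{Case $s_*<1$.} Fix $\bar s<\min_j s_j$ (or $\bar s=0$). The embedding $H^{s_2,p,\de}_0\subset H^{s_1,p,\de}_0$ for $s_1\le s_2$ has a constant depending only on $n,\de,p$, so $(u_j)$ is bounded in $H^{\bar s,p,\de}_0(\Om_{-\de};\R^n)$. By reflexivity and Proposition \ref{Prop: CompactEmbedding} (when $\bar s>0$), a subsequence converges weakly in that space and strongly in $L^p$ to some $u$ vanishing outside $\Om_{-\de}$. The next step is to identify the limit gradient: I want $D^{s_j}_{\de}u_j\rightharpoonup D^{s_*}_{\de}u$ in $L^p$.

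\begin{itemize}
\item Test against $\vphi\in C^\infty_c(\Om;\R^{n\times n})$ and use the nonlocal integration by parts $\int D^{s}_{\de}u_j:\vphi=-\int u_j\cdot\operatorname{div}^{s}_{\de}\vphi$.
\item Show $\operatorname{div}^{s_j}_{\de}\vphi\to\operatorname{div}^{s_*}_{\de}\vphi$ in $L^{p'}$. This should follow from dominated convergence: $c_{n,s}$ is continuous on $[0,1)$, and a Taylor expansion of $\vphi$ makes the integrand bounded by $|x-y|^{1-n-s_j}\,\|\grad\vphi\|_\infty$, which is uniformly integrable while $s_j$ stays away from $1$.
\end{itemize}

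Combining strong convergence of $u_j$ with this convergence of the divergences identifies the weak limit. Lower semicontinuity then gives $D^{s_*}_{\de}u=0$, so $u=0$ by Proposition \ref{Prop: NLPoincare} at $s=s_*$. Again this contradicts $\|u\|_{L^p}=1$.

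The main obstacle is the case $s_*<1$: specifically, continuity in $s$ of the nonlocal divergence, and having a compact embedding uniformly across the family, including the endpoint $\bar s=0$. If compactness at $s=0$ is problematic, I would instead invoke the nonlocal fundamental theorem of calculus, $u=V^{s}_{\de}*D^{s}_{\de}u$. I would then bound $\|V^{s}_{\de}\|_{L^1(B_R)}$ locally uniformly in $s$, which gives the estimate directly by Young's inequality, with $R$ of the order of $\mathrm{diam}\,\Om$.
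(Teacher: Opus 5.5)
There is a genuine gap at the lower endpoint. The paper gives no proof of this statement; it imports it from \cite[Theorem 4]{cueto2023variational}. As written, your argument gives a constant uniform on $[\sigma,1]$ for each fixed $\sigma>0$. That is all the $s\to1^-$ analysis actually uses, but it is not the statement, which asks for uniformity on $[0,1]$.

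\textbf{Compactness fails as $s_j\to0^+$.} If $s_*=0$ you are forced to take $\bar s=0$. There Proposition \ref{Prop: CompactEmbedding} is not available (it is stated for $s\in(0,1)$), and it is in fact false. The operator $D^0_\de$ has order zero (it is a truncated Riesz transform), so a bound in $H^{0,p,\de}$ is only an $L^p$ bound. Compactness also fails uniformly along $s_j\to0^+$. Profiles $\phi(x)\cos(N_jx_1)$ with $N_j\to\infty$ and $N_j^{s_j}$ bounded stay bounded in $H^{s_j,p,\de}$ but converge only weakly to $0$. With weak convergence alone you get $D^0_\de u=0$, hence $u=0$, which does not contradict $\|u_j\|_{L^p}=1$.

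\textbf{The fallback does not close the gap.} Near the origin, $V^s_\de$ has the same leading singularity as the kernel inverting the Riesz fractional gradient. This singularity has size $|x|^{s-n}$, with a prefactor bounded away from zero as $s\to0^+$ (for $n\ge 2$). Hence $\|V^s_\de\|_{L^1(B_R)}\gtrsim R^s/s\to\infty$, and at $s=0$ the kernel is of Calder\'on--Zygmund type and not locally integrable. So Young's inequality gives a constant that blows up as $s\to0^+$.

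\textbf{What is needed.} You need a genuinely $L^p$, singular-integral estimate with constants uniform in $s$. One route is as follows.
\begin{enumerate}
\item Write $D^s_\de u=\grad(Q^s_\de*u)$ with $Q^s_\de(x)=c_{n,s}\int_{|x|}^{\infty}\bar w_\de(t)\,t^{-n-s}\,dt$.
\item Apply the classical Poincar\'e inequality to $Q^s_\de*u\in W^{1,p}_0(\Om;\R^n)$.
\item Recover $u$ through the Fourier multiplier $1/\widehat{Q^s_\de}$. Its Mikhlin constants must be bounded uniformly in $s\in[0,1]$, and this is where $1<p<\infty$ genuinely enters.
\end{enumerate}

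\textbf{A caution on Case $s_*=1$.} Proposition \ref{Prop: CompactVarys->1} assumes only a bound on $\|D^s_\de u_s\|_{L^p}$. Any proof of it must already extract a uniform $L^p$ bound on $u_s$ from that gradient bound, which is exactly the present statement for $s$ near $1$. Unless you know that compactness result is established independently of a uniform Poincar\'e inequality, invoking it here is circular.
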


Now, the usual notion of convergence for related classes of problems is $\Ga$-convergence; we opt to simply provide the $\Ga$-convergence result for our setting rather than provide an abstract definition.

\begin{theorem}[$\Ga$-convergence as $s \rightarrow 1^-$]\label{Thm: Gas->1Theorem}
	Let $\de > 0$, $g \in Z_{\text{ad}}$ be fixed. Then the family of functionals $\{\mathcal{W}^{\de, s}_g\}_{s < 1}$ will $\Ga$-converge in the strong $L^p(\Om; \R^n)$ topology to $\mathcal{W}^{\loc}_g$, which we will denote $\mathcal{W}^{\de, s}_g \xrightarrow{\Ga, \ s \rightarrow 1^-} \mathcal{W}^{\loc}_g$. In other words, we have the following:
	\begin{enumerate}[label=\textbf{GCs\arabic*}]
		\item\label{GCs1} If $\{u_s\}_{s < 1} \subset L^p(\Om; \R^n)$ is a sequence such that $u_s \rightarrow u$ strongly in $L^p(\Om; \R^n)$, then we have the \textbf{lim-inf inequality}
		\begin{equation}\label{Gas->1Liminf}
			\mathcal{W}^{\loc}_g(u) \ \leq \ \liminf_{s \rightarrow 1^-}\mathcal{W}^{\de, s}_g(u_s).
		\end{equation}
		\item\label{GCs2} If $u \in L^p(\Om; \R^n)$, then there exists a \textbf{recovery sequence} of $\{u_s\}_{s < 1} \subset L^p(\Om; \R^n)$ such that $u_s \rightarrow u$ strongly in $L^p(\Om; \R^n)$ and
		\begin{equation}\label{Gas->1Limsup}
			\mathcal{W}^{\loc}_g(u) \ \geq \ \limsup_{s \to 1^-} \mathcal{W}^{\de, s}_g(u_s).
		\end{equation}
	\end{enumerate}
\end{theorem}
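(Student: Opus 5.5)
The plan is to reduce to the $\Gamma$-convergence of the pure integral part and treat the linear term $-\langle g,\cdot\rangle$ as a continuous perturbation. Since $g\in L^{p'}(\Om;\R^n)$ is fixed, $u\mapsto \langle g,u\rangle$ is continuous in the strong $L^p$ topology. Hence if $u_s\to u$ in $L^p$, then $\langle g,u_s\rangle\to\langle g,u\rangle$, and both \ref{GCs1} and \ref{GCs2} follow once they are proved for $I_s(u):=\int_\Om W(x,u,D^s_\de u)\,dx$ (set to $+\infty$ off $H^{s,p,\de}_0(\Om_{-\de};\R^n)$) and its local counterpart $I(u):=\int W(x,u,\grad u)\,dx$ on $W^{1,p}_0(\Om_{-\de};\R^n)$. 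This is the standard stability of $\Gamma$-limits under continuous perturbations.

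\textbf{Lim-inf inequality.} Let $u_s\to u$ in $L^p$. We may assume $\liminf_s \mathcal{W}^{\de,s}_g(u_s)<\infty$ and pass to a subsequence realizing the liminf. Along this subsequence $u_s\in H^{s,p,\de}_0(\Om_{-\de};\R^n)$, and the lower bound in \eqref{Eq: WBound} gives
\[
c_w\|D^s_\de u_s\|_{L^p}^p \ \le\ \mathcal{W}^{\de,s}_g(u_s)+c_0|\Om|+\|g\|_{L^{p'}}\|u_s\|_{L^p},
\]
and $\|u_s\|_{L^p}$ is bounded by convergence. By Proposition \ref{Prop: CompactVarys->1}, a further subsequence satisfies $D^s_\de u_s\rightharpoonup\grad u$ weakly in $L^p$, with $u\in W^{1,p}_0(\Om_{-\de};\R^n)$ because the supports stay in $\Om_{-\de}$. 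We then need lower semicontinuity of $(v,V)\mapsto\int W(x,v,V)$ under strong$\times$weak convergence of pairs $(u_s,D^s_\de u_s)$, where $V=D^s_\de u_s$ is not itself a gradient. The key point, which I expect to be the main obstacle, is the relation $D^s_\de u=\grad(Q^s_\de * u)$ from \cite{bellido2023non,cueto2023variational}: the nonlocal gradient is a genuine gradient of $v_s:=Q^s_\de*u_s$, with $v_s\to u$ in $L^p$ as $s\to1^-$ because $Q^s_\de$ concentrates. Then $\grad v_s\rightharpoonup \grad u$, and the classical weak lower semicontinuity for quasiconvex Carathéodory integrands (\cite[Theorem 8.11]{dacorogna2007direct}) applies. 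The mismatch between $u_s$ and $v_s$ in the $z$-slot must be absorbed. I would handle it by the characterization in \cite[Theorem 5]{cueto2023variational}, or by a Young-measure argument using that the $z$-slot converges strongly. This step is where quasiconvexity is genuinely used, and the proof should follow \cite[Theorem 8]{cueto2023variational} closely.

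\textbf{Recovery sequence.} If $u\notin W^{1,p}_0(\Om_{-\de};\R^n)$, the inequality is trivial, so take $u$ in that space. First let $u\in C_c^\infty(\Om_{-\de};\R^n)$ and choose the constant sequence $u_s:=u$. Known estimates give $D^s_\de u\to\grad u$ uniformly, hence in $L^p$, as $s\to1^-$ under Assumption \ref{Assump: kernels->1}. The upper bound in \eqref{Eq: WBound} with Carathéodory continuity and dominated convergence (Vitali) then gives $I_s(u)\to I(u)$. For general $u$, take $u_k\in C_c^\infty$ with $u_k\to u$ in $W^{1,p}$; $I(u_k)\to I(u)$ by the growth bound and continuity of $W$. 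A diagonal argument in $(k,s)$, using the metrizability of the $L^p$ topology and lower semicontinuity of the $\Gamma$-limsup, produces $u_s\to u$ with $\limsup_s I_s(u_s)\le I(u)$. Adding back the continuous linear term completes \ref{GCs2}.
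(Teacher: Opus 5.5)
Your argument is correct. Its lim-inf half is the mechanism the paper relies on; the paper gives no proof of its own and only cites \cite[Theorem 7]{cueto2023variational}. That mechanism is compactness from the lower growth bound, the translation $D^s_\de u_s=\grad(Q^s_\de*u_s)$ with $Q^s_\de*u_s\to u$ in $L^p$, and classical lower semicontinuity for quasiconvex integrands. Two small points on that half:
\begin{itemize}
\item Of the two devices you offer for the mismatch in the $z$-slot, use the Young-measure one. \cite[Theorem 5]{cueto2023variational} is a characterization at fixed $s$ and does not by itself handle varying $s$. By contrast, $(u_s,\grad v_s)$ generates $\delta_{u(x)}\otimes\nu_x$ with $\nu$ a $W^{1,p}$-gradient Young measure. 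Jensen's inequality for the quasiconvex function $W(x,u(x),\cdot)$, together with $W\geq -c_0$, then concludes.
\item Since $u_s$ vanishes off $\Om_{-\de}$, your global translation also covers the collar $\Om\setminus\Om_{-\de}$, so the split used in Lemma \ref{Lem: lscs->1} is unnecessary. This does force your local integral $I$ to be taken over all of $\Om$, with $u$ extended by zero. Otherwise the two sides differ by $\int_{\Om\setminus\Om_{-\de}}W(x,0,0)\,dx$.
\end{itemize}

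The genuine difference is the recovery step. You use constant sequences only for $u\in C^\infty_c$ and reach general $u$ by density and a diagonal argument. The paper instead takes $u_s:=u$ for every $u\in W^{1,p}_0(\Om_{-\de};\R^n)$ and obtains the equality \eqref{Gas->1Limeq}. Your own tool gives this directly. For such $u$, $D^s_\de u=Q^s_\de*\grad u\to\grad u$ in $L^p$, because $Q^s_\de\geq 0$ concentrates at the origin with $\|Q^s_\de\|_{L^1}\to a_0=1$. The upper bound in \eqref{Eq: WBound} and generalized dominated convergence then finish. Both routes prove the theorem, but the constant-sequence version is more informative and is what the paper uses later. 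Cases 1 and 2 in the proof of Theorem \ref{Thm: QuasiMinimizersConvergenceOfMinss->1} need $\mathcal{W}^{\de,s}_g(u)\to\mathcal{W}^{\loc}_g(u)$ for a fixed admissible $u$, which your diagonal construction does not provide.
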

We comment this is precisely Theorem 4.8 in \cite{cueto2026localization}, but the core mechanism of its proof was developed in \cite[Theorem 7]{cueto2023variational}. The main features were a translation mapping between nonlocal and local gradients, and the equivalence between quasiconvexity and weak lower semi-continuity on $H^{s, p, \de}(\Om; \R^n)$ (Theorems 2 and 5 in \cite{cueto2023variational}, respectively). We also comment that \eqref{Gas->1Limsup} was proven with a constant recovery sequence, i.e., $u_s := u$, and \eqref{Gas->1Limsup} was actually an equality:
\begin{equation}\label{Gas->1Limeq}
    \mathcal{W}^{\loc}_g(u) \ = \ \lim_{s \to 1^-} \mathcal{W}^{\de, s}_g(u).
\end{equation}
Furthermore, the energies $\{\mathcal{W}^{\de, s}_g(\cdot)\}_{s < 1}$ are equi-coercive with respect to the strong topology on $L^p(\Om; \R^n)$ due to \eqref{Eq: WBound}. Finally, similar results were obtained in \cite{bellido2020bond, bellido2021gamma, mengesha2015variational} for various strongly coupled systems of nonlocal equations.

To prove our main convergence result, we also need a version of Lemma \ref{Lem: boundednessAdmissibleStatesNLQuasi} for varying $s$. 

 \begin{lemma}[Boundedness of admissible states as $s \rightarrow 1^-$]\label{Lem: boundednessAdmissibleStatesNLQuasis->1}
        Let $W: \Om \times \R^n \times \R^{n \times n} \rightarrow \R$ be an energy density satisfying Assumption \ref{Assump: energyDensityAssump}, and let $\ep > 0$, $\de > 0$ be fixed. Let $\{(u_s, g_s)\}_{s < 1}$ be a family of pairs such that $(u_s, g_s) \in \mathbb{T}^{\de, s}_{\ep}$ for all $s < 1$. Then $\liminf_{s \rightarrow 1^-}\|D^s_{\de}u_s\|_{L^p(\Om; \R^{n \times n})} < \infty$.
    \end{lemma}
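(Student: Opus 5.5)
The plan is to repeat the proof of Lemma \ref{Lem: boundednessAdmissibleStatesNLQuasi} while tracking how every constant depends on $s$. The one new point is an $s$-independent upper bound on $m_{g_s,\de,s}$, obtained by testing against a single fixed function. The pairing term $\lang g_s, u_s\rang$ must also be absorbed using the uniform Poincaré inequality, Proposition \ref{Prop: NLPoincares}, which is available under Assumption \ref{Assump: kernels->1}.

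\textbf{Step 1 (upper bound on the minima).} I would use the competitor $v = 0 \in H^{s,p,\de}_0(\Om_{-\de};\R^n)$. Since $D^s_\de 0 = 0$ and $\lang g_s, 0\rang = 0$, the upper bound in \eqref{Eq: WBound} gives
\[
m_{g_s,\de,s} \le \mathcal{W}^{\de,s}_{g_s}(0) \le C_w|\Om|
\]
for every $s$. Because $(u_s,g_s)\in\mathbb{T}^{\de,s}_\ep$, this yields
\[
\mathcal{W}^{\de,s}_{g_s}(u_s)\le C_w|\Om|+\ep .
\]
This bound is uniform in $s$ and does not depend on the control.

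\textbf{Step 2 (coercivity, uniformly in $s$).} The lower bound in \eqref{Eq: WBound}, combined with H\"older's inequality and Proposition \ref{Prop: NLPoincares} (whose constant $C$ depends only on $\Om,\de,p$), gives
\[
c_w\|D^s_\de u_s\|^p_{L^p} - c_0|\Om| - C\|g_s\|_{L^{p'}}\|D^s_\de u_s\|_{L^p} \le C_w|\Om|+\ep .
\]
The lemma's proof relies, as the proof of Theorem \ref{Thm: ExistenceControlNonlocalQuasi} does, on $\sup_{g\in Z_{\text{ad}}}\|g\|_{L^{p'}}=:M<\infty$. This holds for the box constraint of Example \ref{Rmk: box}, and the paper treats $Z_{\text{ad}}$ as bounded. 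Write $t_s = \|D^s_\de u_s\|_{L^p}$. Then $t_s$ satisfies
\[
c_w t_s^p - CM t_s \le K := (C_w+c_0)|\Om|+\ep .
\]
Since $p>1$, Young's inequality gives $CMt_s \le \tfrac{c_w}{2}t_s^p + C'(c_w,p)(CM)^{p'}$. Therefore $t_s^p \le \tfrac{2}{c_w}\big(K + C'(CM)^{p'}\big)$ for all $s$. This shows $\sup_s t_s<\infty$, which is stronger than the claimed $\liminf$ statement.

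\textbf{Main obstacle.} The only delicate point is uniformity in $s$, and it enters in two places. First, the Poincaré constant must not blow up as $s\to1^-$; Proposition \ref{Prop: NLPoincares} guarantees this under Assumption \ref{Assump: kernels->1}. Second, the control term needs a uniform bound, which comes from the boundedness of $Z_{\text{ad}}$.

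If one prefers not to assume that $Z_{\text{ad}}$ is bounded, there is a fallback. If the family $\{g_s\}$ is bounded along some sequence $s_k\to1^-$, the same argument gives $\liminf_s t_s<\infty$, which is exactly what the statement asserts. The structure of the conclusion suggests this is the intended level of generality.
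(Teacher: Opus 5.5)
Your proof is correct and is the argument the paper intends. The paper gives no separate proof and presents the lemma as the $s$-dependent version of Lemma \ref{Lem: boundednessAdmissibleStatesNLQuasi}: compare with $v=0$ to get $\mathcal{W}^{\de,s}_{g_s}(u_s)\le C_w|\Om|+\ep$, then apply the lower growth bound. Your extra step, absorbing $\langle g_s,u_s\rangle$ through the $s$-uniform constant of Proposition \ref{Prop: NLPoincares} and a bound on the controls, repairs the paper's own proof of Lemma \ref{Lem: boundednessAdmissibleStatesNLQuasi}, which silently drops that term.

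You are right to flag boundedness of $Z_{\text{ad}}$: it is used tacitly in the proof of Theorem \ref{Thm: ExistenceControlNonlocalQuasi} but is not in Assumption \ref{Assump: Zad}, and it is genuinely needed. Take $W=|A|^p$, $Z_{\text{ad}}=L^{p'}(\Om;\R^n)$ and $g_s=\lambda_s g_0$ with $0\neq g_0$ supported in $\Om_{-\de}$ and $\lambda_s\to\infty$. Minimizers then scale like $\lambda_s^{1/(p-1)}$, so the conclusion fails. For the optimal pairs where the lemma is actually applied, the weight $\La\ge\la>0$ in the cost already bounds $\|\overline{g_{\de,s}}\|_{L^{p'}}$ uniformly in $s$, so your fallback covers that use.
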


Finally, we need a version of Lemma \ref{Lem: lscDirect} when $s$ varies.

\begin{lemma}\label{Lem: lscs->1}
    If $\de > 0$ is fixed and $\{u_s\}_{s < 1}$ is a family such that $u_s \in H^{s, p, \de}(\Om; \R^n)$ for all $s < 1$, $\liminf_{s \rightarrow 1^-}\|D^s_{\de} u_s\|_{L^p(\Om; \R^{n \times n})} < \infty$, $u_s \rightarrow u$ strongly in $L^p(\Om; \R^n)$, and $D^s_{\de} u_s \rightharpoonup \grad u$ weakly in $L^p(\Om; \R^{n \times n})$, then
            \begin{equation}\label{Eq: lscFs->1}
                \liminf_{s \rightarrow 1^-}\int_{\Om}F(x, u_s(x), D^s_{\de}u_s(x))dx \ \geq \ \int_{\Om}F(x, u(x), \grad u(x))dx.
            \end{equation}
\end{lemma}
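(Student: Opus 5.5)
The natural route is to reduce \eqref{Eq: lscFs->1} to the classical lower semi-continuity result of Lemma \ref{Lem: lscDirectLocal}, via the translation mechanism between nonlocal and local gradients from \cite{cueto2023variational} that underlies Theorem \ref{Thm: Gas->1Theorem}. Concretely, \cite[Theorem 2]{cueto2023variational} gives, for each $s$, a function $v_s := Q^{s}_{\de} * u_s$ (convolution with an explicit kernel $Q^s_\de$ in $L^1$ with compact support) such that $\grad v_s = D^s_{\de} u_s$ on $\Om$. First pass to a subsequence $s_k \to 1^-$ realizing the $\liminf$ on the left of \eqref{Eq: lscFs->1}. By the hypothesis $\liminf_{s\to1^-}\|D^s_\de u_s\|_{L^p}<\infty$, we may also assume $\sup_k \|D^{s_k}_\de u_{s_k}\|_{L^p}<\infty$.

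The next step is to show that $v_{s_k} \to u$ strongly in $L^p(\Om;\R^n)$ and $\grad v_{s_k} = D^{s_k}_\de u_{s_k} \rightharpoonup \grad u$ weakly in $L^p$. The gradient statement is exactly the hypothesis. For the function itself, note that $Q^{s}_\de$ converges to a Dirac mass as $s\to 1^-$; this is the same fact used to show that $D^s_\de$ approaches $\grad$. Hence, with $\|Q^s_\de\|_{L^1}$ uniformly bounded, we can write
\[
\|v_{s_k}-u\|_{L^p} \le \|Q^{s_k}_\de*(u_{s_k}-u)\|_{L^p} + \|Q^{s_k}_\de*u-u\|_{L^p} .
\]
The first term tends to zero by Young's inequality and the strong convergence $u_{s_k}\to u$. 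The second tends to zero by the approximate-identity property. Alternatively, since $\grad v_{s_k}$ is bounded and $v_{s_k}$ has controlled support (so Poincaré applies), Rellich gives strong $L^p$ convergence of $v_{s_k}$ along a subsequence. The limit is then identified as $u$ because the gradients converge weakly to $\grad u$ and both functions share the same boundary behavior.

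With this in hand, we compare $F(x,u_{s_k},D^{s_k}_\de u_{s_k})$ with $F(x,v_{s_k},\grad v_{s_k})$. These differ only in the middle argument, where $u_{s_k}$ and $v_{s_k}$ both converge strongly to $u$. The cleanest way to handle this is to apply the lower semi-continuity theorem \cite[Theorem 8.11]{dacorogna2007direct}, or rather its proof, in the form with a lower-order variable and a gradient variable that need not be coupled. That result gives lower semi-continuity of $\int F(x,a_k,\grad v_k)$ whenever $a_k\to a$ strongly in $L^p$ and $\grad v_k\rightharpoonup\grad v$ weakly in $L^p$. This holds under the Carathéodory property, quasiconvexity in $A$, and the growth bounds \eqref{Eq: CostBounds}, using the standard freezing and Scorza--Dragoni argument together with the lower bound $F\ge -a_0$. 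Applying it with $a_k=u_{s_k}$ and $v_k = v_{s_k}$ then yields \eqref{Eq: lscFs->1}.

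The main obstacle is this decoupling step. If one insists on citing Lemma \ref{Lem: lscDirectLocal} verbatim, one must instead control
\[
\int_\Om \big|F(x,u_{s_k},\grad v_{s_k})-F(x,v_{s_k},\grad v_{s_k})\big|\,dx ,
\]
which requires a continuity modulus of $F$ in $z$ uniform in $A$; this is not assumed. I would therefore first try to truncate via the decomposition lemma (Fonseca--Müller--Pedregal), replacing $\grad v_{s_k}$ by $p$-equi-integrable gradients $\grad w_k$ at no energy cost in the $\liminf$. On the sets where $|\grad w_k|\le M$, Egorov's theorem and the Carathéodory continuity of $F$ in $z$, with $u_{s_k}-v_{s_k}\to0$ in measure, make the difference vanish. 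On the complement, the $p$-growth bounds together with equi-integrability render the contribution small.
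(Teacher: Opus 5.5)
Your argument is correct and uses the same core as the paper: the translation identity $D^s_\de u_s=\grad(Q^s_\de*u_s)$ from \cite{cueto2023variational}, followed by quasiconvex lower semicontinuity with $u$-dependence as in \cite[Theorem 8.11]{dacorogna2007direct}. It is organized differently, though.

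\textbf{How the paper proceeds.} The paper splits $\Om$ into $\Om_{-\de}$ and the collar $\Om\setminus\Om_{-\de}$. On $\Om_{-\de}$ one has $B(x,\de)\subset\Om$, so $Q^s_\de*u_s$ only involves values of $u_s$ in $\Om$, and there it repeats the translation argument. The collar is treated separately by a Fatou-type argument resting on $F\ge -a_0$.

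\textbf{How your route differs.} You translate on all of $\Om$. You also make explicit a point the paper's proof leaves implicit: after translation, the lower-order slot carries $u_s$ while the gradient slot carries $\grad v_s$ with $v_s\neq u_s$. Dacorogna's theorem therefore has to be used in decoupled form. That decoupled statement is true, and is most cleanly seen with Young measures. Along a subsequence, $(u_{s_k},\grad v_{s_k})$ generates $\delta_{u(x)}\otimes\nu_x$, where for a.e.\ $x$ the measure $\nu_x$ is a homogeneous $W^{1,p}$-gradient Young measure with barycenter $\grad u(x)$. The bound $F\ge -a_0$, together with Jensen's inequality for the quasiconvex $p$-growth integrand $F(x,u(x),\cdot)$, then gives \eqref{Eq: lscFs->1}. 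What this buys you is one mechanism on the whole of $\Om$, collar included, with the decoupling handled honestly.

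\textbf{One caveat.} On this route, your step proving $v_{s_k}\to u$ strongly is superfluous, since only $u_{s_k}\to u$ and $\grad v_{s_k}\rightharpoonup\grad u$ enter. As written, that step is also not covered by the lemma's hypotheses. For $x$ near $\pa\Om$, the term $Q^{s_k}_\de*(u_{s_k}-u)(x)$ sees values on $\Om_\de\setminus\Om$, so Young's inequality needs $u_{s_k}\to u$ in $L^p(\Om_\de;\R^n)$. That holds when the $u_s$ vanish outside $\Om_{-\de}$, which is the only situation in which the lemma is applied, but it is not assumed. Drop that step, or keep it only for the fallback through Lemma \ref{Lem: lscDirectLocal} in the $H^{s,p,\de}_0(\Om_{-\de};\R^n)$ setting.
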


\begin{proof}
    It suffices to prove each of the following:
    \begin{equation}\label{Eq: mainIntegrands->1}
        \liminf_{s \rightarrow 1^-}\int_{\Om_{-\de}}F(x, u_s(x), D^s_{\de}u_s(x))dx \ \geq \ \int_{\Om_{-\de}}F(x, u(x), \grad u(x))dx.
    \end{equation}
    \begin{equation}\label{Eq: leftovers}
        \liminf_{s \rightarrow 1^-}\int_{\Om\setminus \Om_{-\de}}F(x, u_s(x), D^s_{\de}u_s(x))dx \ \geq \ \int_{\Om\setminus \Om_{-\de}}F(x, u(x), \grad u(x))dx.
    \end{equation}

First, to prove \eqref{Eq: mainIntegrands->1}, notice that our growth condition \eqref{Eq: CostBounds} is a special case of the growth condition used for \cite[Theorem 8.11]{dacorogna2007direct}. Thus we may repeat the argument of (6.5) in \cite{cueto2023variational}, which involves a translation operation between nonlocal and classical gradients, to obtain \eqref{Eq: mainIntegrands->1}. While that paper does not consider integrands with $u$-dependence, we may adapt the same argument since \cite[Theorem 8.11]{dacorogna2007direct}  applies for quasiconvex energy densities with $u$-dependence (also see \cite[Theorem 7.5]{fonseca2007modern} or \cite[Theorem 5.4]{ball1981null} for the case where $F$ is convex, and  \cite[Theorem 5.20]{Rin} as an alternative source for the quasiconvex case).
    
    Meanwhile, to prove \eqref{Eq: leftovers}, we may repeat the Fatou's lemma argument used on (6.6) in \cite{cueto2023variational} since $F(\cdot, \cdot, \cdot)$ is bounded from below by a constant (see \eqref{Eq: CostBounds})

\end{proof}

Now we state and prove our main convergence of minimizers result. The proof idea is as follows: identify a candidate minimizing control-state pair for Problem \ref{Pr: LQuasiMinimizers} using compactness arguments, then prove this pair is admissible using Theorem \ref{Thm: Gas->1Theorem}. This second step is more technical and amounts to delicately constructing a recovery sequence. If $(u, g) \in \mathbb{T}^{\loc}_{\ep}$, then the values of the differences $\mathcal{W}^{\de, s}_g(u_s) - \mathcal{W}^{\de, s}_g(v_s)$, where $\{u_s\}_{s < 1}$ is a candidate recovery sequence for $u$ and $v_s \in \argmin_{v \in H^{s, p, \de}_0(\Om_{-\de}; \R^n)}\mathcal{W}^{\de, s}_g(v)$, may oscillate and actually exceed $\ep$ for some values of $s$. With this in mind we break into cases depending on whether $\mathcal{W}^{\loc}_g(u) - \min_{v \in W^{1, p}_0(\widetilde{\Om}; \R^n)}\mathcal{W}^{\loc}_g(v)$ is strictly less than $\ep$ or equal to $\ep$. In the former case we use a constant recovery sequence for $u$, but in trickier latter case, our recovery sequence is comprised of convex combinations of $u$ and $v_s$.

\begin{theorem}[Convergence of minimizers as $s \rightarrow 1^-$]\label{Thm: QuasiMinimizersConvergenceOfMinss->1}
    Suppose that $(\overline{u_{\de, s}}, \overline{g_{\de, s}}) \in \mathbb{T}^{\de, s}_{\ep}$ is a solution to Problem \ref{Pr: NLQuasiMinimizers} with energy tolerance $\ep$. Then there exists a non-relabeled sub-sequence, a $\overline{u} \in W^{1, p}_0(\widetilde{\Om}; \R^n)$, and a $\overline{g} \in Z_{\text{ad}}$ such that $(\overline{u}, \overline{g}) \in \mathbb{T}^{\loc}_{\ep}$, we have
    \begin{equation}\label{Eq: QuasiMinimizersConvergenceOfMinss->1}
        \lim_{s \rightarrow 1^-}\mathcal{F}_{\de, s}(\overline{u_{\de, s}}, \overline{g_{\de, s}}) \ = \ \mathcal{F}_{\loc}(\overline{u}, \overline{g}),
    \end{equation}
    and that $(\overline{u}, \overline{g})$ is a solution to Problem \ref{Pr: LQuasiMinimizers} with energy tolerance $\ep$. Moreover, $\overline{u_{\de, s}} \rightarrow \overline{u}$ strongly in $L^p(\Om; \R^n)$ and $\overline{g_{\de, s}} \rightharpoonup \overline{g}$ weakly in $L^{p'}(\Om; \R^n)$.
 \end{theorem}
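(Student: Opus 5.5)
We follow the standard $\Ga$-convergence blueprint for optimal control problems. It has three stages: compactness of the optimal pairs, admissibility of the limit pair, and optimality of the limit pair via recovery sequences.

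\textbf{Compactness.} By Lemma \ref{Lem: boundednessAdmissibleStatesNLQuasis->1} we have $\liminf_{s\to1^-}\|D^s_\de \overline{u_{\de,s}}\|_{L^p}<\infty$. Proposition \ref{Prop: CompactVarys->1} then gives a subsequence and $\overline u\in W^{1,p}_0(\widetilde\Om;\R^n)$ with $\overline{u_{\de,s}}\to\overline u$ in $L^p$ and $D^s_\de\overline{u_{\de,s}}\rightharpoonup\grad\overline u$ in $L^p$. The support condition on $\Om_{-\de}$ passes to the limit under strong $L^p$ convergence. Since $\la\|g\|^{p'}\le\mathcal F_{\de,s}$ plus a bounded quantity along the sequence, the controls are bounded in $L^{p'}$. (The optimal value is bounded above: use the constant recovery sequence for any fixed local admissible pair, as in the last step.) Closedness and convexity of $Z_{\text{ad}}$ then give $\overline{g_{\de,s}}\rightharpoonup\overline g\in Z_{\text{ad}}$.

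\textbf{Admissibility.} We show $\mathcal W^{\loc}_{\overline g}(\overline u)-m_{\overline g,\loc}\le\ep$.
\begin{itemize}
\item The liminf inequality \ref{GCs1}, together with the weak-strong convergence of $\lang\overline{g_{\de,s}},\overline{u_{\de,s}}\rang$, gives $\mathcal W^{\loc}_{\overline g}(\overline u)\le\liminf \mathcal W^{\de,s}_{\overline{g_{\de,s}}}(\overline{u_{\de,s}})$.
\item For the minimum values we need $\limsup_s m_{\overline{g_{\de,s}},\de,s}\le m_{\overline g,\loc}$. Take $w\in\argmin\mathcal W^{\loc}_{\overline g}$ and test with the constant sequence $w$. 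Equation \eqref{Gas->1Limeq} and weak convergence of the controls against the fixed $w$ give $m_{\overline{g_{\de,s}},\de,s}\le \mathcal W^{\de,s}_{\overline{g_{\de,s}}}(w)\to\mathcal W^{\loc}_{\overline g}(w)$.
\end{itemize}
Combining these with the constraint $\mathcal W^{\de,s}_{\overline{g_{\de,s}}}(\overline{u_{\de,s}})\le m_{\overline{g_{\de,s}},\de,s}+\ep$ yields admissibility. The liminf of the cost follows from Lemma \ref{Lem: lscs->1} and weak lower semicontinuity of the weighted $L^{p'}$ term, so $\mathcal F_{\loc}(\overline u,\overline g)\le\liminf\mathcal F_{\de,s}(\overline{u_{\de,s}},\overline{g_{\de,s}})$.

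\textbf{Optimality: the main obstacle.} Fix an arbitrary $(u,g)\in\mathbb T^{\loc}_\ep$. We must build $u_s$ with $(u_s,g)\in\mathbb T^{\de,s}_\ep$ and $\limsup\mathcal F_{\de,s}(u_s,g)\le\mathcal F_{\loc}(u,g)$. This needs a matching lower bound $\liminf_s m_{g,\de,s}\ge m_{g,\loc}$. To get it, take minimizers $v_s$, which are bounded by Lemma \ref{Lem: boundednessAdmissibleStatesNLQuasis->1}. Extract a limit $v$ via Proposition \ref{Prop: CompactVarys->1} and apply \ref{GCs1}; with the upper bound above, this gives $m_{g,\de,s}\to m_{g,\loc}$ along subsequences.

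\emph{Case 1: $\mathcal W^{\loc}_g(u)-m_{g,\loc}<\ep$.} Take $u_s:=u$. Then \eqref{Gas->1Limeq} and convergence of the minima place $(u,g)$ in $\mathbb T^{\de,s}_\ep$ for $s$ near $1$. Moreover $\mathcal F_{\de,s}(u,g)\to\mathcal F_{\loc}(u,g)$. For this we need $D^s_\de u\to\grad u$ strongly in $L^p$ for $W^{1,p}_0$ functions, which is the same mechanism behind \eqref{Gas->1Limeq}, and then continuity of $F$ under \eqref{Eq: CostBounds} via dominated convergence.

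\emph{Case 2: equality.} Here we avoid convexity of the energy, which is not available. Instead, approximate $u$ in the local problem by $u_t:=(1-t)u+tw$, where $w$ is a local minimizer. We expect $\mathcal W^{\loc}_g(u_t)<m_{g,\loc}+\ep$ for small $t>0$. If quasiconvexity does not give this, we would interpolate through $\mathcal W^{\loc}_g(u_t)$ using continuity in $t$; in that situation we would still need strict inequality, for instance by replacing $u$ with a near point of lower energy when $u$ is not a minimizer. Apply Case 1 to each $u_t$, note $\mathcal F_{\loc}(u_t,g)\to\mathcal F_{\loc}(u,g)$ as $t\to0$ by the growth bounds, and conclude with a diagonal argument in $(t,s)$.

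Finally, optimality of each $(\overline{u_{\de,s}},\overline{g_{\de,s}})$ gives $\limsup\mathcal F_{\de,s}(\overline{u_{\de,s}},\overline{g_{\de,s}})\le\mathcal F_{\loc}(u,g)$ for every local admissible pair. Chaining this with the liminf bound proves both that $(\overline u,\overline g)$ solves Problem \ref{Pr: LQuasiMinimizers} and \eqref{Eq: QuasiMinimizersConvergenceOfMinss->1}.
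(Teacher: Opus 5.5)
Your compactness, admissibility and lim-inf steps, and your Case~1, are sound and follow the paper's outline. In places they are more careful than the paper. You bound the controls through the coercivity of the cost rather than through an unstated boundedness of $Z_{\text{ad}}$. In Case~1 you correctly ask for $\mathcal{F}_{\de,s}(u,g)\to\mathcal{F}_{\loc}(u,g)$ via $D^s_\de u\to\grad u$ strongly in $L^p$, whereas the paper only invokes the lim-inf Lemma~\ref{Lem: lscs->1} on that side of the optimality inequality.

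The genuine gap is Case~2. The claim that $\mathcal{W}^{\loc}_g((1-t)u+tw)<m_{g,\loc}+\ep$ for small $t>0$ has no basis. Quasiconvexity of $W(x,z,\cdot)$ gives convexity only along rank-one directions, $\grad w-\grad u$ is in general not rank-one, and $W$ need not be convex in $z$ at all. So $t\mapsto\mathcal{W}^{\loc}_g(u_t)$ may first increase. Your fallback does not cover the bad case. Suppose $u$ is a $W^{1,p}$-local but non-global minimizer of $\mathcal{W}^{\loc}_g$ lying exactly at level $m_{g,\loc}+\ep$. Then every state near $u$ has energy at least $m_{g,\loc}+\ep$, so there is no nearby point of lower energy. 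You cannot move the control instead, because $Z_{\text{ad}}$ may be a single point. What Case~2 actually requires is that $u$ belong to the $W^{1,p}$-closure of the strict sublevel set $\{\mathcal{W}^{\loc}_g<m_{g,\loc}+\ep\}$, and nothing in the hypotheses gives this.

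The paper handles Case~2 by a different construction. It interpolates between $u$ and the \emph{nonlocal} minimizers $v_{s(k)}$, choosing $t_{s(k)}$ by the intermediate value theorem so that the energy of $t u+(1-t)v_{s(k)}$ is exactly $\ep/2^k$ below $\mathcal{W}^{\de,s(k)}_g(u)$. However, its assertion that one can take $t_{s(k)}\to 1^-$ meets the same obstruction: the energy along that segment need not drop below $\mathcal{W}^{\de,s(k)}_g(u)$ near $t=1$. So the paper does not supply the missing step either.

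Your argument does prove something without Case~2. Let $V(\ep)$ denote the optimal value of Problem~\ref{Pr: LQuasiMinimizers}. Applying your Case~1 to optimal local pairs for tolerances $\ep'<\ep$ gives $\limsup_{s\to 1^-}\mathcal{F}_{\de,s}(\overline{u_{\de,s}},\overline{g_{\de,s}})\le\lim_{\ep'\to\ep^-}V(\ep')$. Since $V$ is nonincreasing, combining this with your lim-inf bound proves the theorem whenever $V$ is left-continuous at $\ep$, hence for all but countably many $\ep$. At the remaining values you would need an extra hypothesis ruling out the degenerate configuration above.
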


 \begin{proof}
  We first prove that
     \begin{equation}\label{Eq: QuasiMinimizersConvergenceOfMinss->1LimInfEq0}
          \liminf_{s \rightarrow 1^-}\mathcal{F}_{\de, s}(\overline{u_{\de, s}}, \overline{g_{\de, s}}) \ \geq \ \mathcal{F}_{\loc}(\overline{u}, \overline{g}).
     \end{equation}
     To this end, observe that by Lemma \ref{Lem: boundednessAdmissibleStatesNLQuasis->1}, the family $\{\overline{u_{\de, s}}\}_{s < 1}$ is bounded in the sense that 
     \begin{equation}\label{Eq: liminfs->1Conv}
     \liminf_{s \rightarrow 1^-}\|D^s_{\de}\overline{u_{\de, s}}\|_{L^p(\Om; \R^{n \times n})} < \infty.
     \end{equation}
     Then by the compactness result \cite[Lemma 9]{cueto2023variational} there exists $\overline{u} \in W^{1, p}_0(\widetilde{\Om}; \R^n)$ so that, up to a sub-sequence, $\overline{u_{\de, s}} \rightarrow \overline{u}$ strongly in $L^p(\Om; \R^n)$ and $D^s_{\de}\overline{u_{\de, s}} \rightharpoonup \grad \overline{u}$ weakly in $L^p(\Om; \R^{n \times n})$. Thus we may apply Lemma \ref{Lem: lscs->1} to obtain
    \begin{equation}\label{Eq: QuasiMinimizersConvergenceOfMinss->1Controls1}
        \liminf_{s \rightarrow 1^-}\int_{\Om}F(x, \overline{u_{\de, s}}(x), D^s_{\de}\overline{u_{\de, s}}(x))dx \ \geq \ \int_{\Om}F(x, \overline{u}(x), \grad \overline{u}(x))dx.
    \end{equation}
There will also, by reflexivity and weak closedness of $Z_{\text{ad}}$, exist a further sub-sequence and a $\overline{g} \in Z_{\text{ad}}$ such that $\overline{g_{\de, s}} \rightharpoonup \overline{g}$ weakly in $L^{p'}(\Om; \R^n)$. Similarly to the proof of Theorem \ref{Thm: ExistenceControlNonlocalQuasi}, we obtain 
\begin{equation}\label{Eq: QuasiMinimizersConvergenceOfMinss->1Controls1A}
    \liminf_{s \rightarrow 1^-}\|\overline{g_{\de, s}}\|^{p'}_{L^{p'}_{\La}(\Om; \R^n)} \ \geq \ \|\overline{g}\|^{p'}_{L^{p'}_{\La}(\Om; \R^n)}
\end{equation}
Combining \eqref{Eq: QuasiMinimizersConvergenceOfMinss->1Controls1} and \eqref{Eq: QuasiMinimizersConvergenceOfMinss->1Controls1A} gives us \eqref{Eq: QuasiMinimizersConvergenceOfMinss->1LimInfEq0}.

Now we show that $(\overline{u}, \overline{g}) \in \mathbb{T}^{\loc}_{\ep}$.
Owing to the lim-inf inequality of $\Ga$-convergence (Theorem \ref{Thm: Gas->1Theorem}) and the weak-strong convergence of the compliance term, we have that
\begin{equation}\label{Eq: QuasiMinimizersConvergenceOfMinss->1LimInfEq1}
    \liminf_{s \rightarrow 1^-}\mathcal{W}^{\de, s}_{\overline{g_{\de, s}}}(\overline{u_{\de, s}}) \ \geq \ \mathcal{W}^{\loc}_{\overline{g}}(\overline{u}).
\end{equation}
Meanwhile, since the energies $\{\mathcal{W}^{\de, s}_{\overline{g_{\de, s}}}(\cdot)\}_{s < 1}$ are equi-coercive with respect to the strong topology on $L^p(\Om; \R^n)$, we also have that
\begin{equation}\label{Eq: QuasiMinimizersConvergenceOfMinss->1LimInfEq2}
    \lim_{s \rightarrow 1^-}\min_{v \in H^{s, p, \de}_0(\Om_{-\de}; \R^n)}\mathcal{W}^{\de, s}_{\overline{g_{\de, s}}}(v) \ = \ \min_{v \in W^{1, p}_0(\widetilde{\Om}; \R^n)}\mathcal{W}^{\loc}_{\overline{g}}(v),
\end{equation}
by \cite[Corollary 7.20]{dal2012introduction}. Thus, we have that
\begin{equation}\label{Eq: QuasiMinimizersConvergenceOfMinss->1LimInfEq3}
    \mathcal{W}^{\loc}_{\overline{g}}(\overline{u}) - \min_{v \in W^{1, p}_0(\Om; \R^n)}\mathcal{W}^{\loc}_{\overline{g}}(v) \ \leq \ \liminf_{s \rightarrow 1^-}\left(\mathcal{W}^{\de, s}_{\overline{g_{\de, s}}}(\overline{u_{\de, s}}) - \min_{v \in H^{s, p, \de}_0(\Om_{-\de}; \R^n)}\mathcal{W}^{\de, s}_{\overline{g_{\de, s}}}(v)\right) \ \leq \ \ep,
\end{equation}
proving that $(\overline{u}, \overline{g}) \in \mathbb{T}^{\loc}_{\ep}$. 
     Now we prove that $(\overline{u}, \overline{g})$ is a solution to Problem \ref{Pr: LQuasiMinimizers} with tolerance $\ep$, so let $(u, g) \in \mathbb{T}^{\loc}_{\ep}$ be arbitrary, and let $w \in \argmin_{v \in W^{1, p}_0(\widetilde{\Om}; \R^n)}\mathcal{W}^{\loc}_g(v)$. We will construct a suitable, admissible recovery sequence. To this end, notice that by applying Proposition \ref{Prop: QuasiConvexityEnergy} for each $s < 1$, there exists a family $\{v_s\}_{s < 1}$ such that $v_s \in H^{s, p, \de}_0(\Om_{-\de}; \R^n)$ and $v_s \in \argmin_{v \in H^{s, p, \de}_0(\Om_{-\de}; \R^n)}\mathcal{W}^{\de, s}_g(v)$. 
     
     \textbf{Case 1:} Suppose that $\mathcal{W}^{\loc}_g(u) - \mathcal{W}^{\loc}_g(w) \ < \ \ep$. In that case, recall by \eqref{Gas->1Limeq} we have 
     \begin{equation}\label{Eq: QuasiMinimizersConvergenceOfMinss->1MinimizationEq1}
         \lim_{s \rightarrow 1^-}\mathcal{W}^{\de, s}_g(u) \ = \ \mathcal{W}^{\loc}_g(u).
     \end{equation}
     Notice that the energies $\{\mathcal{W}^{\de, s}_g(\cdot)\}_{s < 1}$ are equi-coercive, so  similarly to \eqref{Eq: QuasiMinimizersConvergenceOfMinss->1LimInfEq2} we have that
     \begin{equation}\label{Eq: QuasiMinimizersConvergenceOfMinss->1MinimizationEq2}
          \lim_{s \rightarrow 1^-}\mathcal{W}^{\de, s}_g(v_s) \ = \ \min_{v \in W^{1, p}_0(\Om; \R^n)}\mathcal{W}^{\loc}_g(v).
     \end{equation}
     Combining \eqref{Eq: QuasiMinimizersConvergenceOfMinss->1MinimizationEq1} and \eqref{Eq: QuasiMinimizersConvergenceOfMinss->1MinimizationEq2} and the Case 1 assumption, for all $s$ sufficiently close to $1$ we have 
     \begin{equation}\label{Eq: QuasiMinimizersConvergenceOfMinss->1MinimizationEq3}
         \mathcal{W}^{\de, s}_g(u) - \mathcal{W}^{\de, s}_g(v_s) \ < \ \ep.
     \end{equation}
     In other words, for all $s$ sufficiently close to $1$ we have $(u, g) \in \mathbb{T}^{\de, s}_{\ep}$, meaning in effect we may take the constant recovery sequence, i.e., $u_s := u$. 
     

     \textbf{Case 2:} Now suppose that $\mathcal{W}^{\loc}_g(u) - \mathcal{W}^{\loc}_g(w) = \ep$. Pick a sub-sequence $\{s(k)\}^{\infty}_{k = 1} \subset (0, 1)$ with $s_k \rightarrow 1$ as $k \rightarrow \infty$ and
     \begin{eqnarray}\label{Eq: QuasiMinimizersConvergenceOfMinss->1MinimizationEq4}
         \begin{aligned}
             |\mathcal{W}^{\loc}_g(u) - \mathcal{W}^{\de, s(k)}_g(u)| \ &< \ \frac{\ep}{2^{k + 2}}; \\
             |\mathcal{W}^{\loc}_g(w) - \mathcal{W}^{\de, s(k)}_g(v_{s(k)})| \ &< \ \frac{\ep}{2^{k + 2}}.
         \end{aligned}
     \end{eqnarray}
     Now we observe that for any fixed $s \in (0, 1)$ and any $T_s \in (\mathcal{W}^{\de, s}_g(v_s), \mathcal{W}^{\de, s}_g(u))$, there exists a $t_s \in (0, 1)$ so that $\mathcal{W}^{\de, s}_g(u_{t_s}) = T_s$, where $u_t := tu + (1 - t)v_s$. The reasoning is as follows: for $s \in (0, 1)$ fixed, the function $\mathcal{W}^{\de, s}_g(\cdot)$ is continuous with respect to strong convergence in $H^{s, p, \de}_0(\Om_{-\de}; \R^n)$ owing to the Generalized Dominated Convergence Theorem and the dominating growth bound \eqref{Eq: WBound} for $W$. Then we may use the Intermediate Value Theorem on the function $\mathcal{T}(t) := \mathcal{W}^{\de, s}_g(u_t)$ for any $T_s \in (\mathcal{W}^{\de, s}_g(v_s), \mathcal{W}^{\de, s}_g(u))$.
     

     Now we claim we may construct a sequence $\{t_{s(k)}\}^{\infty}_{k = 1} \subset [0, 1]$ such that $t_{s(k)} \rightarrow 1^-$ as $k \rightarrow \infty$, and
     \begin{equation}\label{Eq: cutoffExactValueCriterion}
         \mathcal{W}^{\de, s(k)}_g(u) - \mathcal{W}^{\de, s(k)}_g(u_{t_{s(k)}}) \ = \ \frac{\ep}{2^k}.
     \end{equation}
     We may apply the previous observation iteratively for each $k \in \N^+$ on the functionals $\{\mathcal{W}^{\de, s(k)}_g(\cdot)\}^{\infty}_{k = 1}$ to obtain a sequence $\{t_{s(k)}\}^{\infty}_{k = 1}$ satisfying \eqref{Eq: cutoffExactValueCriterion} (so, we are using the sub-sequence $\{s(k)\}^{\infty}_{k = 1}$ to dictate what $t_{s(k)}$ must be for each $k$). In fact, since $u_s$ converges to $u$ strongly in $L^p(\Om; \R^n)$ as $s \rightarrow 1^-$, we may take this sequence so that $t_{s(k)} \rightarrow 1^-$ as $k \rightarrow \infty$. We also remark that \eqref{Eq: cutoffExactValueCriterion} is feasible in the sense that $\mathcal{W}^{\loc}_g(u) - \mathcal{W}^{\loc}_g(w) = \ep$, so for $k$ sufficiently large we will be guaranteed that
     \begin{equation}\label{Eq: QuasiMinimizersConvergenceOfMinss->1MinimizationEq5}
         \mathcal{W}^{\de, s(k)}_g(u) - \mathcal{W}^{\de, s(k)}_g(v_{s(k)}) \ > \ \frac{\ep}{2}.
     \end{equation}
     In other words, the gap between $ \mathcal{W}^{\de, s(k)}_g(u)$ and $\mathcal{W}^{\de, s(k)}_g(v_{s(k)})$ is not so small as to make \eqref{Eq: cutoffExactValueCriterion} impossible to satisfy.

     Now we obtain the following inequality chain for all $k \in \N^+$ sufficiently large so that these inequalities are satisfied. Using \eqref{Eq: QuasiMinimizersConvergenceOfMinss->1MinimizationEq4} and \eqref{Eq: cutoffExactValueCriterion} we have:
     \begin{eqnarray}\label{Eq: grandInequalityChainEpsilon}
         \begin{aligned}
             &\mathcal{W}^{\de, s(k)}_g(u_{t_{s(k)}}) - \mathcal{W}^{\de, s(k)}_g(v_{s(k)}) \ = \ \\
             & \mathcal{W}^{\de, s(k)}_g(u) - \mathcal{W}^{\de, s(k)}_g(v_{s(k)}) - \frac{\ep}{2^k} \ < \ \\
             & \mathcal{W}^{\loc}_g(u) + \frac{\ep}{2^{k + 2}} - \mathcal{W}^{\de, s(k)}_g(v_{s(k)}) - \frac{\ep}{2^k} \ < \ \\
             & \mathcal{W}^{\loc}_g(u) + \frac{\ep}{2^{k + 2}} - \mathcal{W}^{\loc}_g(w) + \frac{\ep}{2^{k + 2}} - \frac{\ep}{2^k} \ = \ \\
             & \mathcal{W}^{\loc}_g(u) + \frac{\ep}{2^{k + 1}} - \mathcal{W}^{\loc}_g(w)  - \frac{\ep}{2^k} \ = \ \\
             & \mathcal{W}^{\loc}_g(u)  - \mathcal{W}^{\loc}_g(w)  - \frac{\ep}{2^{k + 1}} \ < \ \\
             &\mathcal{W}^{\loc}_g(u) - \mathcal{W}^{\loc}_g(w) \ = \ \ep
         \end{aligned}
     \end{eqnarray}
     In other words, we have that $(u_{t_{s(k)}}, g) \in \mathbb{T}^{\de, s(k)}_{\ep}$ for all $k \in \N^+$ sufficiently large, and $u_{t_{s(k)}} \rightarrow u$ strongly in $L^p(\Om; \R^n)$, making this a suitable recovery sequence.

Henceforth, for both Cases 1 and 2, we denote the recovery sequence simply by $\{(u_s, g)\}_{s < 1}$, for which $u_s \rightarrow u$ strongly in $L^p(\Om; \R^n)$.
By optimality we have that
\begin{equation}\label{Eq: QuasiMinimizersConvergenceOfMinss->1MinimizationEq9}
    \mathcal{F}_{\de, s}(\overline{u_{\de, s}}, \overline{g_{\de, s}}) \ \leq \ \mathcal{F}_{\de, s}(u_s, g).
    \end{equation}
    for all $s < 1$. We may send $s \rightarrow 1^-$. In doing so we use \eqref{Eq: lscFs->1} twice: once on the left-hand side of \eqref{Eq: QuasiMinimizersConvergenceOfMinss->1MinimizationEq9} with the family $\{\overline{u_s}\}_{s < 1}$ (alongside the weak convergence $\overline{g_{\de, s}} \rightharpoonup \overline{g}$ in $L^{p'}(\Om; \R^n)$), and once on the right-hand side of \eqref{Eq: QuasiMinimizersConvergenceOfMinss->1MinimizationEq9} with the family $\{u_s\}_{s < 1}$. This guarantees
    \begin{equation}\label{Eq: QuasiMinimizersConvergenceOfMinss->1MinimizationEq10}
        \mathcal{F}_{\loc}(\overline{u}, \overline{g}) \ \leq \ \mathcal{F}_{\loc}(u, g),
    \end{equation}
    which means that $(\overline{u}, \overline{g})$ indeed is a solution for Problem \ref{Pr: LQuasiMinimizers}. Finally, we may set $(u, g) := (\overline{u}, \overline{g})$ in the analysis used to prove optimality to deduce that the lim-sup inequality
    \begin{equation}\label{Eq: QuasiMinimizersConvergenceOfMinss->1Limsup}
        \limsup_{s \rightarrow 1^-}\mathcal{F}_{\de, s}(\overline{u_{\de, s}}, \overline{g_{\de, s}}) \ \leq \  \mathcal{F}_{\loc}(\overline{u}, \overline{g}).
    \end{equation}
    holds. Combining \eqref{Eq: QuasiMinimizersConvergenceOfMinss->1Limsup} with \eqref{Eq: QuasiMinimizersConvergenceOfMinss->1LimInfEq0} gives \eqref{Eq: QuasiMinimizersConvergenceOfMinss->1}, as desired.
 \end{proof}

 \begin{remark}\label{Rmk: Ep=0}
     The case where $\ep = 0$ corresponds to the situation studied in the \cite{cueto2026localization} with only partial success (the lim-inf inequality was proven for a more restricted class of cost functionals, but minimality of the resulting limit was not achieved). We note the argument used here clearly cannot be applied to that setting because of the prevalence of tolerance thresholds in \eqref{Eq: grandInequalityChainEpsilon}.
 \end{remark}

 \begin{remark}\label{Rmk: AltExistenceQuasiMins}
    Theorem \ref{Thm: QuasiMinimizersConvergenceOfMinss->1} provides an alternative mechanism for proving the existence of solutions to Problem \ref{Pr: LQuasiMinimizers}.
\end{remark}

\begin{corollary}[Strong convergence of controls as $s \rightarrow 1^-$]\label{Cor: QuasiMinimizersConvergenceOfMinss->1Controls}
    In the setting of Theorem \ref{Thm: QuasiMinimizersConvergenceOfMinss->1}, we additionally have that $\overline{g_{\de, s}} \rightarrow \overline{g}$ strongly in $L^r(\Om; \R^n)$ as $s \rightarrow 1^-$ for any $r \in [1, \infty)$.
\end{corollary}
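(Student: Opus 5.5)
The plan is to upgrade the weak convergence $\overline{g_{\de, s}} \rightharpoonup \overline{g}$ in $L^{p'}(\Om; \R^n)$ from Theorem \ref{Thm: QuasiMinimizersConvergenceOfMinss->1} to convergence of the weighted norms, and then use uniform convexity (a Radon--Riesz argument). Write
\[
a_s := \int_{\Om}F(x, \overline{u_{\de, s}}, D^s_{\de}\overline{u_{\de, s}})\,dx, \qquad b_s := \int_{\Om}\La|\overline{g_{\de, s}}|^{p'}dx,
\]
and let $a, b$ denote the corresponding local quantities built from $(\overline{u}, \overline{g})$. Along the subsequence of Theorem \ref{Thm: QuasiMinimizersConvergenceOfMinss->1}, the identity \eqref{Eq: QuasiMinimizersConvergenceOfMinss->1} says $a_s + b_s \to a + b$. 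Moreover, \eqref{Eq: QuasiMinimizersConvergenceOfMinss->1Controls1} gives $\liminf a_s \geq a$, and \eqref{Eq: QuasiMinimizersConvergenceOfMinss->1Controls1A} gives $\liminf b_s \geq b$. Since all quantities are finite (bounded below by \eqref{Eq: CostBounds} and Lemma \ref{Lem: boundednessAdmissibleStatesNLQuasis->1}), the elementary splitting argument yields
\[
\limsup b_s \ \leq\ \lim (a_s+b_s) - \liminf a_s \ \leq\ b,
\]
so $b_s \to b$. In other words, $\|\overline{g_{\de, s}}\|_{L^{p'}_{\La}} \to \|\overline{g}\|_{L^{p'}_{\La}}$, where this norm is understood with the weight $\La$, which is how it enters the cost.

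Next, observe that $L^{p'}_{\La}(\Om;\R^n)$ is exactly $L^{p'}$ with respect to the finite measure $\La\,dx$. Since $\la \leq \La \leq \|\La\|_{L^\infty}$, this norm is equivalent to the unweighted one, and weak convergence in $L^{p'}(\Om;\R^n)$ is the same as weak convergence in $L^{p'}_{\La}$, because the dual pairings differ only by the bounded factor $\La$. As $p' \in (1,\infty)$, this space is uniformly convex by Clarkson's inequalities. The Radon--Riesz (Kadec--Klee) property then turns weak convergence together with convergence of norms into strong convergence in $L^{p'}_{\La}$, hence in $L^{p'}(\Om;\R^n)$. For $r \in [1, p']$, strong convergence in $L^r$ follows immediately from Hölder's inequality on the bounded set $\Om$.

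The main obstacle is the range $r > p'$, since nothing so far controls higher integrability. Here I would use the boundedness of $Z_{\text{ad}}$ already invoked in the proof of Theorem \ref{Thm: ExistenceControlNonlocalQuasi}, in its concrete form as the box constraint of Example \ref{Rmk: box}. This gives a uniform bound $\|\overline{g_{\de, s}}\|_{L^\infty} + \|\overline{g}\|_{L^\infty} \leq M := 2(\|\mathfrak{a}\|_{L^\infty} + \|\mathfrak{b}\|_{L^\infty})$, and the interpolation estimate
\[
\|\overline{g_{\de, s}} - \overline{g}\|_{L^r}^r \ \leq\ M^{r-p'}\,\|\overline{g_{\de, s}} - \overline{g}\|_{L^{p'}}^{p'} \ \to\ 0
\]
finishes the proof. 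If $Z_{\text{ad}}$ were only assumed to be a general closed convex set, the statement for $r > p'$ would need such an $L^\infty$ bound to be added as a hypothesis. I would flag in the write-up that this is exactly where the box structure is used.
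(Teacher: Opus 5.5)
Your proposal is correct and follows the paper's route. You get convergence of the weighted control norms from \eqref{Eq: QuasiMinimizersConvergenceOfMinss->1} and \eqref{Eq: QuasiMinimizersConvergenceOfMinss->1Controls1}, upgrade weak to strong convergence in $L^{p'}$, and then pass to other $L^r$ (a step the paper defers to \cite[Theorem 4.12]{cueto2026localization}). Your write-up is tighter than the paper's in three places: you correctly derive a $\limsup$ bound where the paper writes $\liminf$, you invoke uniform convexity rather than mere reflexivity for the Radon--Riesz step, and you rightly flag that $r > p'$ needs an $L^\infty$ bound such as the box constraint of Example \ref{Rmk: box}, which Assumption \ref{Assump: Zad} alone does not supply.
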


This proof is similar to that of Theorem 4.12 in \cite{cueto2026localization} but holds for a broader class of energy densities.

\begin{proof}
We only show convergence of  $\overline{g_{\de, s}} \rightarrow \overline{g}$ strongly in $L^{p'}(\Om; \R^n)$, and then the extension to the other $L^r$ spaces proceeds exactly as in the proof of \cite[Theorem 4.12]{cueto2026localization}.

    Combining \eqref{Eq: QuasiMinimizersConvergenceOfMinss->1} and \eqref{Eq: QuasiMinimizersConvergenceOfMinss->1Controls1} yields the inequality
    \begin{equation}\label{Eq: QuasiMinimizersConvergenceOfMinss->1Controls2}
        \liminf_{s \rightarrow 1^-}\|\overline{g_{\de, s}}\|^{p'}_{L^{p'}_{\La}(\Om; \R^n)} \ \leq \ \|\overline{g}\|^{p'}_{L^{p'}_{\La}(\Om; \R^n)}
    \end{equation}
    Furthermore, \eqref{Eq: QuasiMinimizersConvergenceOfMinss->1Controls2} combined with the weak convergence $\overline{g_{\de, s}} \rightharpoonup \overline{g}$ in $L^{p'}(\Om; \R^n)$ gives the limit 
    \begin{equation}\label{Eq: QuasiMinimizersConvergenceOfMinss->1Controls3}
        \lim_{s \rightarrow 1^-}\|\overline{g_{\de, s}}\|^{p'}_{L^{p'}_{\La}(\Om; \R^n)} \ = \ \|\overline{g}\|^{p'}_{L^{p'}_{\La}(\Om; \R^n)}.
    \end{equation}
    Since $\La$ is a bounded positive weight function, $(L^{p'}(\Om; \R^n), \|\cdot\|_{L^{p'}_{\La}(\Om; \R^n)})$ is a reflexive Banach space. Then \eqref{Eq: QuasiMinimizersConvergenceOfMinss->1Controls3} and the weak convergence imply that $\overline{g_{\de, s}} \rightarrow \overline{g}$ strongly in $L^{p'}(\Om; \R^n)$, completing the proof.
\end{proof}

\begin{remark}[Relationship to control problem with global minimizers constraint]\label{Rmk: toGlobal}
    The nonlocal problem studied in \cite{cueto2026localization} is the same as Problem \ref{Pr: NLQuasiMinimizers}, except we replace the $\ep$-quasi-minimizer constraint with a global minimization constraint (i.e., $\ep = 0$). Thus it is natural to ask whether Problem \ref{Pr: NLQuasiMinimizers} and Theorem \ref{Thm: QuasiMinimizersConvergenceOfMinss->1} can be used to obtain convergence of minimizers as $s \rightarrow 1^-$ for the corresponding nonlocal problems with a global minimization constraint. 

    A logical first step is to prove that, if $\{(\overline{u_{s, \ep}}, \overline{g_{s, \ep}})\}_{\ep > 0}$ is a family of solutions to Problem \ref{Pr: NLQuasiMinimizers} (with $\de > 0$ and $s \in (0, 1]$ fixed), then there exists a pair $(\overline{u_s}, \overline{g_s})$ such that:
    \begin{itemize}
    \item $\overline{u_{s, \ep}} \rightarrow \overline{u_s}$ strongly in $L^p(\Om; \R^n)$;
    \item $\overline{g_{s, \ep}} \rightharpoonup \overline{g_s}$ weakly in $L^{p'}(\Om; \R^n)$.
        \item $(\overline{u_s}, \overline{g_s})$ solves the optimal control problem with $\ep = 0$.
    \end{itemize}
    The proof of this claim follows by the compactness result Proposition \ref{Prop: CompactEmbedding} and a direct verification of the optimality of $(\overline{u_s}, \overline{g_s})$ (i.e., prove that $\mathcal{F}_{\loc}(\overline{u_s}, \overline{g_s}) \leq \mathcal{F}_{\loc}(u_s, g_s)$ for every energy-minimizing pair $(u_s, g_s)$ using the optimality of $(\overline{u_{s, \ep}}, \overline{g_{s, \ep}})$ for each $\ep > 0$). 

    However, this is insufficient for proving convergence of solutions to the problems with the global minimization energy constraint because, for any $s \in (0, 1]$ and optimal pair $(\overline{u_s}, \overline{g_s})$, we cannot guarantee a priori the existence of optimal pairs $\{(\overline{u_{s, \ep}}, \overline{g_{s, \ep}})\}_{\ep > 0}$ that convergence (in a suitable topology) to $(\overline{u_s}, \overline{g_s})$ as $\ep \rightarrow 0^+$. The main reason for this is that the problems where $\ep = 0$ do not necessarily have unique solutions.
\end{remark}


 \subsection{Convergence as $\de \rightarrow 0^+$}\label{Subsec: QuasiMinsConvDe->0}

 Now we discuss the convergence results where $s \in (0, 1)$ is fixed and $\de \rightarrow 0^+$; the progression here is identical to that of Subsection \ref{Subsec: quasiMinimizersAsymptoticss->1}. This time we need a different auxiliary assumption on the kernels $\{w_{\de}(\cdot)\}_{\de > 0}$.

\begin{assumption}\label{Assump: kernelde->0}
    Let $w(x):=w_1(x)$ satisfy Assumption \ref{Assump: Kernels}. Then, the sequence of $w_\de$ are constructed via the formula $w_\de(x):=w\left(\frac{x}{\de}\right)$. 
\end{assumption}

This assumption is required for a Nonlocal Poincaré Inequality with constant independent of $\de$ (see \cite[Corollary 3.4]{cueto2025gamma}). 

    \begin{proposition}[Nonlocal Poincaré inequality II]\label{Prop: NLPoincarede}
		Let $\de \geq 0$, $p \in (1, \infty)$, $\Om \subset \R^n$ be open and bounded. Then there exists a constant $C > 0$ depending only on $\Om$, $s$, and $p$ such that for all $u \in H^{s, p, \de}_0(\Om_{-\de}; \R^n)$, we have
		\begin{equation}\label{Eq: NLPoincarede}
			\|u\|_{L^p(\Om; \R^n)} \ \leq \ C\|D^s_{\de}u\|_{L^p(\Om; \R^{n\times n})}.
		\end{equation}
	\end{proposition}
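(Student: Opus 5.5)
The inequality will come from \cite[Corollary 3.4]{cueto2025gamma}, but I would explain its mechanism through a scaling argument, since the whole point is that $C$ does not depend on $\de$. Assumption \ref{Assump: kernelde->0} gives $w_\de(x) = w(x/\de)$, so
\[
\rho^s_\de(\xi) \ = \ \de^{-(n-1+s)}\rho^s_1(\xi/\de).
\]
For $u \in H^{s,p,\de}_0(\Om_{-\de};\R^n)$ set $u^\de(y) := u(\de y)$. The change of variables $y = \de z$ in \eqref{Eq: NLGrad} gives
\[
D^s_\de u(x) \ = \ \de^{-s}\,(D^s_1 u^\de)(x/\de).
\]
Consequently the $L^p$ norms over $\Om$ and over the rescaled domain $\de^{-1}\Om$ are related by explicit powers of $\de$.

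A pure rescaling will not close the argument, because the domain $\de^{-1}\Om$ blows up as $\de \to 0^+$. I would therefore follow the route of \cite{cueto2025gamma}, which relates the nonlocal and local gradients. The plan has three steps.

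\textbf{Step 1: represent $D^s_\de u$ as a convolution.} I would write
\[
D^s_\de u \ = \ \grad (Q^s_\de * u)
\]
for an integrable kernel $Q^s_\de$. Equivalently, $u = V^s_\de * D^s_\de u$, which is the nonlocal fundamental theorem of calculus of \cite{bellido2025nonlocal}. Its scaling law is
\[
V^s_\de(x) \ = \ \de^{s-n}\, V^s_1(x/\de).
\]

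\textbf{Step 2: bound the kernel uniformly.} The kernel $V^s_1$ behaves like $|x|^{-(n-s)}$ near the origin, like the Riesz potential, and is smooth away from it. After scaling, $V^s_\de$ is dominated by a multiple of $|x|^{-(n-s)} + |x|^{1-n}$ with constants independent of $\de \le 1$. The $|x|^{1-n}$ term is the usual Poincaré kernel.

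\textbf{Step 3: apply Young's inequality.} Since $D^s_\de u$ is supported in $\Om$ and $\Om$ is bounded, only the restriction of $V^s_\de$ to $B(0,\operatorname{diam}\Om)$ is needed. The dominating function above is integrable there with an $L^1$ norm depending only on $\Om$, $s$ and $n$. Young's inequality then gives $\|u\|_{L^p(\Om)} \le C\|D^s_\de u\|_{L^p(\Om)}$. For large $\de$ the constant can be handled by monotonicity, or one can restrict to $\de \le 1$, which is the regime relevant to the limit $\de \to 0^+$.

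The main obstacle is Step 2, specifically controlling the far-field behaviour of $V^s_1$. Its decay at infinity is only of local type, $|x|^{1-n}$, and this must be checked to hold uniformly in the truncation. To verify it I would rely on the Fourier symbol of $Q^s_1$ and on the Fourier analysis of \cite{bellido2025nonlocal}, rather than redo it here.
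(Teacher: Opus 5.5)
Citing \cite[Corollary 3.4]{cueto2025gamma} is exactly what the paper does; it gives no argument of its own. However, the mechanism you sketch fails at Step 2, which is precisely where the uniformity in $\de$ that the proposition is about gets decided. Your scaling law $V^s_\de(x)=\de^{s-n}V^s_1(x/\de)$ is scale-invariant only in the near field $|x|\le\de$. Combined with the far-field decay $|V^s_1(z)|\le C|z|^{1-n}$ for $|z|\ge 1$, it gives, for $|x|\ge\de$,
\[
|V^s_\de(x)| \ \le \ C\,\de^{s-n}\Bigl(\frac{|x|}{\de}\Bigr)^{1-n} \ = \ C\,\de^{s-1}|x|^{1-n}.
\]
So $\|V^s_\de\|_{L^1(B(0,R))}$ is of order $\de^{s}+\de^{s-1}R$. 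Young's inequality then only yields a constant of order $\de^{s-1}$, which blows up as $\de\to0^+$ because $s<1$. The claimed domination by $C(|x|^{-(n-s)}+|x|^{1-n})$ with $C$ independent of $\de\le 1$ is therefore false. Your ``main obstacle'' is also misplaced. The $|z|^{1-n}$ decay of the single fixed kernel $V^s_1$ is not in doubt; what destroys uniformity is the prefactor $\de^{s-1}$ produced by the rescaling.

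Sharper kernel estimates cannot repair this, because with $w_\de=w(\cdot/\de)$ and the fixed constant $c_{n,s}$ the $\de$-uniform inequality is false. Take any nonzero $u\in C^\infty_c(\Om;\R^n)$. It lies in $H^{s,p,\de}_0(\Om_{-\de};\R^n)$ for all small $\de$, and $|u(x)-u(y)|\le\|\grad u\|_{\infty}|x-y|$ gives
\[
|D^s_\de u(x)| \ \le \ \|\grad u\|_{\infty}\int_{B(0,\de)}\rho^s_\de(h)\,dh \ = \ \de^{1-s}\,\|\grad u\|_{\infty}\int_{B(0,1)}\rho^s_1(h)\,dh,
\]
with $D^s_\de u$ supported in a fixed bounded set. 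Hence $\|D^s_\de u\|_{L^p}=O(\de^{1-s})\to 0$ while $\|u\|_{L^p}$ stays fixed. The best constant is therefore at least of order $\de^{s-1}$, so your Young bound is actually sharp.

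Uniformity requires normalizing the kernel so that $D^s_\de u\to\grad u$ for smooth $u$. For example, replace $\rho^s_\de$ by $\de^{s-1}\rho^s_\de=\de^{-n}\rho^s_1(\cdot/\de)$, or equivalently use $Q_\de=\de^{-n}Q^s_1(\cdot/\de)$. The cited uniform estimate has to be read in such a normalized setting. With that normalization your computation gives $V_\de(x)=\de^{1-n}V^s_1(x/\de)$, hence $|V_\de(x)|\le C(\de^{1-s}|x|^{s-n}+|x|^{1-n})$ uniformly for $\de\le 1$. Your Steps 1 and 3 then go through, with a constant depending only on $\operatorname{diam}\Om$, $n$, $s$ and $p$.
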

    
Next we state the $\Ga$-convergence result for the energies.

 \begin{theorem}[$\Ga$-convergence as $\de \rightarrow 0^+$]\label{Thm: Gade->0: GaConv}
	Let $s \in (0, 1)$, $g \in Z_{\text{ad}}$ be fixed. Then the family of functionals $\{\mathcal{W}^{\de, s}_g\}_{s < 1}$ will $\Ga$-converge in the strong $L^p(\Om; \R^n)$ topology to $\mathcal{W}^{\loc}_g$, which we will denote $\mathcal{W}^{\de, s}_g \xrightarrow{\Ga, \ \de \rightarrow 0^+} \mathcal{W}^{\loc}_g$. In other words, we have the following:
	\begin{enumerate}[label=\textbf{GCs\arabic*}]
		\item\label{GCd1} If $\{u_{\de}\}_{\de > 0} \subset L^p(\Om; \R^n)$ is a sequence such that $u_{\de} \rightarrow u$ strongly in $L^p(\Om; \R^n)$, then we have the \textbf{lim-inf inequality}
		\begin{equation}\label{Gade->0Liminf}
			\mathcal{W}^{\loc}_g(u) \ \leq \ \liminf_{\de \rightarrow 0^+}\mathcal{W}^{\de, s}_g(u_{\de}).
		\end{equation}
		\item\label{GCd2} If $u \in L^p(\Om; \R^n)$, then there exists a \textbf{recovery sequence} of $\{u_{\de}\}_{\de > 0} \subset L^p(\Om; \R^n)$ such that $u_{\de} \rightarrow u$ strongly in $L^p(\Om; \R^n)$ and
		\begin{equation}\label{Gade->0Limsup}
			\mathcal{W}^{\loc}_g(u) \ \geq \ \limsup_{\de \rightarrow 0^+} \mathcal{W}^{\de, s}_g(u_{\de}).
		\end{equation}
	\end{enumerate}
\end{theorem}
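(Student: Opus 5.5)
We prove the $\Ga$-convergence as $\de \rightarrow 0^+$ in the same way as Theorem \ref{Thm: Gas->1Theorem}: a lim-inf inequality obtained from compactness plus lower semi-continuity, and a lim-sup inequality obtained with a constant recovery sequence. The new ingredients are the $\de$-uniform Poincaré inequality (Proposition \ref{Prop: NLPoincarede}) and the compactness result Proposition \ref{Prop: CompactVaryde-0}. The linear term $-\lang g, u\rang$ is continuous under strong $L^p$ convergence since $g \in L^{p'}$, so it passes to the limit in both inequalities and we only need to treat $\int_\Om W(x,u,D^s_\de u)\,dx$.

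\textbf{Lim-inf inequality.} Let $u_\de \rightarrow u$ in $L^p(\Om;\R^n)$. Pass to a subsequence realizing the lim-inf. We may assume the lim-inf is finite, since otherwise there is nothing to prove. The lower bound in \eqref{Eq: WBound} and the bound on $\|u_\de\|_{L^p}$ then give $\sup_\de \|D^s_\de u_\de\|_{L^p} < \infty$. Proposition \ref{Prop: CompactVaryde-0} yields $u \in W^{1,p}_0(\Om;\R^n)$ with $D^s_\de u_\de \rightharpoonup \grad u$ weakly in $L^p(\Om;\R^{n\times n})$.

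It remains to show lower semi-continuity of $\int_\Om W(x,u_\de,D^s_\de u_\de)\,dx$ along this mixed nonlocal-to-local convergence. For this we use the translation identity $D^s_\de v = \grad(Q^s_\de * v)$ from \cite{cueto2023variational, cueto2025gamma}. Setting $U_\de := Q^s_\de * u_\de$, we get $\grad U_\de = D^s_\de u_\de \rightharpoonup \grad u$. Moreover $U_\de - u_\de \rightarrow 0$ in $L^p$, because $Q^s_\de$ is an approximate identity as $\de \rightarrow 0$ under Assumption \ref{Assump: kernelde->0}. The quasiconvex lower semi-continuity theorem \cite[Theorem 8.11]{dacorogna2007direct} then applies to $\int W(x,U_\de,\grad U_\de)$. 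The $u$-argument is handled by strong convergence together with the Carathéodory property. If needed, we replace $U_\de$ by $u_\de$ in that slot using a truncation/equi-integrability argument, since $W$ is continuous in $z$ and the lower bound is independent of $z$. The region $\Om\setminus\Om_{-\de}$, where the translation may fail near the boundary, has measure tending to zero. On it we use Fatou's lemma with the constant lower bound $-c_0$, as in the proof of Lemma \ref{Lem: lscs->1}.

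\textbf{Lim-sup inequality.} If $u \notin W^{1,p}_0(\Om;\R^n)$ the inequality is trivial. Otherwise, by density of $C_c^\infty(\Om;\R^n)$ in $W^{1,p}_0$ and a diagonal argument ($\Ga$-limsup is lower semi-continuous), it suffices to treat $u \in C_c^\infty(\Om;\R^n)$. Such a $u$ lies in $H^{s,p,\de}_0(\Om_{-\de};\R^n)$ for $\de$ small, since its support sits inside $\Om_{-\de}$. We take the constant recovery sequence $u_\de := u$. We then show $D^s_\de u = Q^s_\de * \grad u \rightarrow \grad u$ strongly in $L^p$, using the approximate-identity property of $Q^s_\de$ under Assumption \ref{Assump: kernelde->0}. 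The upper bound in \eqref{Eq: WBound} together with the generalized dominated convergence theorem then gives $\lim_{\de\to 0}\mathcal{W}^{\de,s}_g(u) = \mathcal{W}^{\loc}_g(u)$, which is the analogue of \eqref{Gas->1Limeq}.

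\textbf{Main obstacle.} The hardest step is the lim-inf inequality, specifically justifying the translation $D^s_\de = \grad \circ (Q^s_\de *)$ with quantitative control of $Q^s_\de$ as $\de\to 0$. The first thing to try is to cite the corresponding lemmas of \cite{cueto2025gamma}, where $\|Q^s_\de\|_{L^1}$ is shown to be uniformly bounded and $Q^s_\de$ concentrates at the origin by scaling. Combined with the compactness of Proposition \ref{Prop: CompactVaryde-0}, this should reduce the statement to the classical lower semi-continuity theorem.
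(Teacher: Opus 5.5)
Your route is the mechanism behind the result the paper only cites: compactness from Proposition \ref{Prop: CompactVaryde-0}, transfer to classical gradients via $D^s_\de = \grad(Q^s_\de * \cdot)$, classical lower semicontinuity for quasiconvex integrands, and a constant recovery sequence on $C^\infty_c(\Om;\R^n)$ extended by density. The paper gives no argument of its own beyond citing \cite[Theorem 4.15]{cueto2026localization}, which in turn applies \cite[Theorem 3.7]{cueto2025gamma}.

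Several of your choices are sound. The density step is the right adaptation to states that must vanish outside $\Om_{-\de}$. Your remark on the mismatch between $u_\de$ in the $z$-slot and $U_\de$ under the gradient is more careful than the paper's treatment of the analogous step in Lemma \ref{Lem: lscde->0}.

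Two minor points:
\begin{itemize}
\item Splitting off $\Om\setminus\Om_{-\de}$ is unnecessary. The identity $D^s_\de u = \grad(Q^s_\de * u)$ holds on all of $\Om$, since for $x\in\Om$ both sides only see $u$ on $B(x,\de)\subset\Om_\de$.
\item You are tacitly proving the version with boundary conditions built in. Your compactness step needs $u_\de\in H^{s,p,\de}_0(\Om_{-\de};\R^n)$, and the ``trivial'' case of the lim-sup needs $\mathcal{W}^{\loc}_g=+\infty$ off $W^{1,p}_0(\Om;\R^n)$. This is how the result is used in Theorem \ref{Thm: QuasiMinimizersConvergenceOfMinsDe->0}, but it is not what \eqref{Eq: NLEnergy} and \eqref{Eq: GenEnergyL} literally say, so state it.
\end{itemize}

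The real problem is the step you use twice: that $Q^s_\de$ is an approximate identity \emph{under Assumption \ref{Assump: kernelde->0}}. It is not. Since $\grad Q^s_\de(x) = -\rho^s_\de(x)\,x/|x|^2$, one has $Q^s_\de(x) = c_{n,s}\int_{|x|}^\infty \bar{w}(t/\de)\,t^{-n-s}\,dt = \de^{1-s}\,\de^{-n}Q^s_1(x/\de)$. Hence $Q^s_\de\geq 0$ and $\|Q^s_\de\|_{L^1(\R^n)} = \de^{1-s}\|Q^s_1\|_{L^1(\R^n)}\to 0$. A uniform $L^1$ bound plus concentration at the origin does not give an approximate identity; you need the mass to tend to $1$.

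With the kernel exactly as written, $\|D^s_\de\vphi\|_{L^p}\leq \de^{1-s}\|Q^s_1\|_{L^1}\|\grad\vphi\|_{L^p}\to 0$ for every $\vphi\in C^\infty_c(\Om;\R^n)$. So both your lim-sup step and the claim $U_\de - u_\de\to 0$ fail. The statement itself fails too. Take $W(x,z,A)=|A|^p$ and $\vphi\neq 0$; the constant family $u_\de=\vphi$ gives $\lim_{\de\to0^+}\mathcal{W}^{\de,s}_g(\vphi) = -\lang g,\vphi\rang < \mathcal{W}^{\loc}_g(\vphi)$, which violates the lim-inf inequality. For the same reason, the $\de$-independent constant in Proposition \ref{Prop: NLPoincarede} is impossible.

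What is missing is a renormalization of the kernel. For example, replace $\rho^s_\de$ by $\de^{s-1}\rho^s_\de$ and require $\int_{\R^n}\rho^s_1 = n$ (equivalently $\int_{\R^n}Q^s_1 = 1$). Then $Q^s_\de = \de^{-n}Q^s_1(\cdot/\de)$ is a genuine mollifier and your argument goes through. You should impose this hypothesis explicitly rather than attribute it to Assumption \ref{Assump: kernelde->0}.
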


This precise result is Theorem 4.15 in \cite{cueto2026localization}, but that in turn is an application of \cite[Theorem 3.7]{cueto2025gamma}, which describes the analysis of localizing energy functionals in a vanishing horizon parameter.

To prove our main convergence result, we also need a version of Lemma \ref{Lem: boundednessAdmissibleStatesNLQuasi} for varying $\de$. 

 \begin{lemma}[Boundedness of admissible states as $\de \rightarrow 0^+$]\label{Lem: boundednessAdmissibleStatesNLQuaside->0}
        Let $W: \Om \times \R^n \times \R^{n \times n} \rightarrow \R$ be an energy density satisfying Assumption \ref{Assump: energyDensityAssump}, and let $\ep > 0$, $s \in (0, 1)$ be fixed. Let $\{(u_{\de}, g_{\de})\}_{\de > 0}$ be a family of pairs such that $(u_{\de}, g_{\de}) \in \mathbb{T}^{\de, s}_{\ep}$ for all $\de > 0$. Then $\liminf_{\de \rightarrow 0^+}\|D^s_{\de}u_{\de}\|_{L^p(\Om_{\de}; \R^{n \times n})} < \infty$.
    \end{lemma}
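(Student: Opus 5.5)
The plan mirrors the proof of Lemma \ref{Lem: boundednessAdmissibleStatesNLQuasi}, keeping every constant independent of $\de$. The goal is a uniform bound on $\|D^s_{\de}u_{\de}\|_{L^p}$ for all $\de>0$, which immediately gives the finite $\liminf$.

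\textbf{Step 1: upper bound on the energy of $u_\de$.} Fix $\de>0$ and pick $\widehat{u_{\de}}\in\argmin\mathcal{W}^{\de,s}_{g_\de}(\cdot)$, which exists by Proposition \ref{Prop: QuasiConvexityEnergy}. Comparing with the admissible competitor $v\equiv 0$, the growth bound \eqref{Eq: WBound} gives
\[
m_{g_\de,\de,s}\le \mathcal{W}^{\de,s}_{g_\de}(0)\le C_w|\Om|.
\]
The quasi-minimality constraint then yields
\[
\mathcal{W}^{\de,s}_{g_\de}(u_\de)\le C_w|\Om|+\ep.
\]
This bound does not depend on $\de$.

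\textbf{Step 2: coercivity.} By the lower bound in \eqref{Eq: WBound},
\[
c_w\|D^s_\de u_\de\|^p_{L^p}-c_0|\Om|-\lang g_\de,u_\de\rang\le C_w|\Om|+\ep .
\]
In the earlier lemma the linear term was tacitly discarded, but here it must be controlled uniformly in $\de$. I would use H\"older's inequality and then the Poincaré inequality Proposition \ref{Prop: NLPoincarede}, whose constant $C$ depends only on $\Om,s,p$ under Assumption \ref{Assump: kernelde->0}:
\[
|\lang g_\de,u_\de\rang|\le \|g_\de\|_{L^{p'}}\,C\|D^s_\de u_\de\|_{L^p}.
\]
Young's inequality absorbs this into $\tfrac{c_w}{2}\|D^s_\de u_\de\|^p_{L^p}$, at the cost of a term $C'\|g_\de\|^{p'}_{L^{p'}}$.

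\textbf{Step 3: uniform control of the controls.} This is the main obstacle, because the constants in Step 2 must not depend on $g_\de$. The existence proof treats $Z_{\text{ad}}$ as bounded in $L^{p'}$ (for instance a box constraint, Example \ref{Rmk: box}). I would invoke that boundedness to get $\sup_\de\|g_\de\|_{L^{p'}}<\infty$. Alternatively, along a sequence with bounded cost, the regularization term $\la\|g_\de\|^{p'}_{L^{p'}}$ of $\mathcal{F}_{\de,s}$ supplies the same bound.

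Combining the three steps gives $\|D^s_\de u_\de\|^p_{L^p}\le C''$ with $C''$ independent of $\de$. Norms over $\Om$ versus $\Om_\de$ are handled identically, since $u_\de$ vanishes outside $\Om_{-\de}$ and the same bounds apply on the larger set. The $\liminf$ is therefore finite.
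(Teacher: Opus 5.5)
Your proof is correct and follows the same comparison-plus-coercivity argument the paper uses for Lemma \ref{Lem: boundednessAdmissibleStatesNLQuasi} and implicitly reuses here (no separate proof is given), but it is more careful than the paper's: the paper silently discards $\lang g_\de,u_\de\rang$, whereas you control it with H\"older, the $\de$-uniform Poincar\'e inequality (Proposition \ref{Prop: NLPoincarede}), Young, and boundedness of $Z_{\text{ad}}$. That boundedness is used tacitly in Theorem \ref{Thm: ExistenceControlNonlocalQuasi} and is genuinely needed: with $Z_{\text{ad}}=L^{p'}$, $W(x,z,A)=|A|^p$ and $g_\de=\la_\de g_0$ with $\la_\de\to\infty$ fast enough, exact minimizers scale like $\la_\de^{1/(p-1)}$ and the $\liminf$ is infinite; your bounded-cost alternative only covers optimal pairs, not arbitrary admissible families. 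One small fix: the bound on $\Om_\de$ holds not because ``the same bounds apply on the larger set'' but because $D^s_\de u_\de\equiv 0$ on $\Om_\de\setminus\Om$, since $u_\de$ vanishes there and $B(x,\de)\cap\Om_{-\de}=\emptyset$ for every $x\notin\Om$.
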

    
Finally, we need a version of Lemma \ref{Lem: lscs->1} for when $\de$ approaches $0$.

\begin{lemma}\label{Lem: lscde->0}
     If $s \in (0, 1)$ is fixed and $\{u_{\de}\}_{\de > 0}$ is a family such that $\liminf_{\de \rightarrow 0^+}\|u_{\de}\|_{H^{s, p, \de}(\Om; \R^n)} < \infty$, $u_{\de} \rightarrow u$ strongly in $L^p(\Om; \R^n)$, and $D^s_{\de} u_{\de} \rightharpoonup \grad u$ weakly in $L^p(\Om; \R^{n \times n})$, then
            \begin{equation}\label{Eq: lscde->0}
                \liminf_{\de \rightarrow 0^+}\int_{\Om}F(x, u_{\de}(x), D^s_{\de}u_{\de}(x))dx \ \geq \ \int_{\Om}F(x, u(x), \grad u(x))dx.
            \end{equation}
    
\end{lemma}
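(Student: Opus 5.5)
We follow the template of Lemma \ref{Lem: lscs->1}, now with $\de$ varying and $s$ fixed. The one new feature is that the integration domain near the boundary shrinks as $\de \rightarrow 0^+$. We argue along a subsequence $\de_j \rightarrow 0^+$ realizing the $\liminf$ on the left of \eqref{Eq: lscde->0}. By the hypothesis $\liminf_{\de\to 0^+}\|u_\de\|_{H^{s,p,\de}}<\infty$ we may also assume $\sup_j \|D^s_{\de_j}u_{\de_j}\|_{L^p(\Om;\R^{n\times n})} < \infty$.

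\textbf{Step 1: pass to classical gradients.} The key tool is the translation mechanism between nonlocal and classical gradients used in \cite{cueto2025gamma} for the $\Gamma$-convergence result Theorem \ref{Thm: Gade->0: GaConv}. For $s$ fixed, there is a kernel $Q^s_\de$ (the $\de$-dependent analogue of the operator in \cite[Theorem 2]{cueto2023variational}) such that $D^s_\de u = \grad (Q^s_\de * u)$. Set $v_j := Q^s_{\de_j} * u_{\de_j}$. Then $\grad v_j = D^s_{\de_j}u_{\de_j}$ on $\Om$. By Assumption \ref{Assump: kernelde->0} and the scaling $w_\de = w(\cdot/\de)$, the kernels $Q^s_{\de}$ form an approximation of the identity as $\de \rightarrow 0^+$, in the sense established in \cite{cueto2025gamma}. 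Combined with the uniform bound on $\|u_{\de_j}\|_{L^p}$ (from Proposition \ref{Prop: NLPoincarede}) and the strong convergence $u_{\de_j}\to u$, this gives $v_j \rightarrow u$ strongly in $L^p$ on compact subsets of $\Om$. We also have $\grad v_j \rightharpoonup \grad u$ weakly in $L^p$ by hypothesis.

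\textbf{Step 2: lower semicontinuity.} Fix $\Om' \Subset \Om$. We write
\[
\int_{\Om'} F(x,u_{\de_j},D^s_{\de_j}u_{\de_j})\,dx = \int_{\Om'} F(x,u_{\de_j},\grad v_j)\,dx .
\]
Since $F$ is Carathéodory, quasiconvex in $A$, and satisfies \eqref{Eq: CostBounds}, we apply \cite[Theorem 8.11]{dacorogna2007direct} on $\Om'$ exactly as in the proof of Lemma \ref{Lem: lscDirectLocal}. The only difference is that the $u$-slot carries $u_{\de_j}$ rather than $v_j$. This is harmless because both converge strongly in $L^p(\Om')$ to the same limit $u$, and the theorem only uses strong convergence in the lower-order slot. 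We obtain
\[
\liminf_j \int_{\Om'} F(x,u_{\de_j},D^s_{\de_j}u_{\de_j})\,dx \ \geq\ \int_{\Om'} F(x,u,\grad u)\,dx .
\]

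\textbf{Step 3: the boundary layer.} On $\Om\setminus\Om'$ we use $F \geq -a_0$ from \eqref{Eq: CostBounds}. This gives
\[
\int_{\Om\setminus \Om'} F(x,u_{\de_j},D^s_{\de_j}u_{\de_j})\,dx \ \geq\ -a_0|\Om\setminus\Om'| ,
\]
which is the Fatou-type argument used for \eqref{Eq: leftovers}. Adding this to Step 2, we then let $\Om' \uparrow \Om$. The term $|\Om\setminus\Om'|$ vanishes, and $\int_{\Om'}F(x,u,\grad u)\,dx \to \int_\Om F(x,u,\grad u)\,dx$ by dominated convergence, using the upper bound in \eqref{Eq: CostBounds} and $u\in W^{1,p}$. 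This yields \eqref{Eq: lscde->0}.

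\textbf{Main obstacle.} The hardest step is Step 1. We need the translation operator $Q^s_\de$ to be controlled uniformly as $\de\to 0^+$, so that $v_j$ converges strongly in $L^p_{\loc}$ to the same limit $u$. We would take this from the estimates behind \cite[Lemma 3.6]{cueto2025gamma} (Proposition \ref{Prop: CompactVaryde-0}), which rely precisely on the scaling in Assumption \ref{Assump: kernelde->0}.
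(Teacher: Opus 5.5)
Your proposal is correct and is essentially the paper's argument: the paper simply defers to the lim-inf part of \cite[Theorem 3.7]{cueto2025gamma}, i.e.\ rewriting $D^s_\de u_\de=\grad(Q^s_\de*u_\de)$, applying quasiconvex lower semicontinuity \cite[Theorem 8.11]{dacorogna2007direct} (which it asserts tolerates $u$-dependence), and absorbing the leftover region with the lower bound $F\geq -a_0$. Your interior exhaustion $\Om'\Subset\Om$ and your remark that the second slot carries $u_{\de_j}$ rather than $v_j$ just make explicit what the paper's two stated modifications leave implicit. That mismatch is harmless because such lower semicontinuity results use only strong convergence in the lower-order slot, e.g.\ via gradient Young measures.
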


\begin{proof}
    The proof coincides with the lim-inf inequality portion of the proof of \cite[Theorem 3.7]{cueto2025gamma}. The differences are twofold: one, we integrate over $\Om$ instead of $\Om_{\de}$; and second, our integrands exhibit $u$-dependence. The latter is not a complication for the proof since the weak lower semi-continuity result \cite[Theorem 8.11]{dacorogna2007direct} applies to functionals with $u$-dependence. 
\end{proof}

Now we state the main convergence result as $\de \rightarrow 0^+$, analogous to Theorem \ref{Thm: QuasiMinimizersConvergenceOfMinss->1}. However, we need to use \cite[Lemma 3.6]{cueto2025gamma} as our compactness lemma for vanishing horizons.

\begin{theorem}[Convergence of minimizers with $\de \rightarrow 0^+$]\label{Thm: QuasiMinimizersConvergenceOfMinsDe->0}
    Suppose that $(\overline{u_{\de, s}}, \overline{g_{\de, s}}) \in \mathbb{T}^{\de, s}_{\ep}$ is a solution to Problem \ref{Pr: NLQuasiMinimizers} with energy tolerance $\ep$. Then there exists a non-relabeled sub-sequence, a $\overline{u} \in W^{1, p}_0(\Om; \R^n)$, and a $\overline{g} \in Z_{\text{ad}}$ such that $(\overline{u}, \overline{g}) \in \mathbb{T}^{\loc}_{\ep}$, we have
    \begin{equation}\label{Eq: QuasiMinimizersConvergenceOfMinsDe->0}
        \lim_{\de \rightarrow 0^+}\mathcal{F}_{\de, s}(\overline{u_{\de, s}}, \overline{g_{\de, s}}) \ = \ \mathcal{F}_{\loc}(\overline{u}, \overline{g}),
    \end{equation}
    and that $(\overline{u}, \overline{g})$ is a solution to Problem \ref{Pr: LQuasiMinimizers} with energy tolerance $\ep$. Moreover, $\overline{u_{\de, s}} \rightarrow \overline{u}$ strongly in $L^p(\Om; \R^n)$ and $\overline{g_{\de, s}} \rightharpoonup \overline{g}$ weakly in $L^{p'}(\Om; \R^n)$.
 \end{theorem}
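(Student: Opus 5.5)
The proof will mirror that of Theorem \ref{Thm: QuasiMinimizersConvergenceOfMinss->1}, with the $\de \rightarrow 0^+$ tools replacing the $s \rightarrow 1^-$ ones and with $\widetilde{\Om} := \Om$.

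\textbf{Step 1: compactness and the lim-inf bound.} By Lemma \ref{Lem: boundednessAdmissibleStatesNLQuaside->0}, $\liminf_{\de \rightarrow 0^+}\|D^s_{\de}\overline{u_{\de, s}}\|_{L^p} < \infty$. Proposition \ref{Prop: CompactVaryde-0} (that is, \cite[Lemma 3.6]{cueto2025gamma}) then gives a sub-sequence and a $\overline{u} \in W^{1, p}_0(\Om; \R^n)$ with $\overline{u_{\de, s}} \rightarrow \overline{u}$ strongly in $L^p$ and $D^s_{\de}\overline{u_{\de, s}} \rightharpoonup \grad \overline{u}$ weakly in $L^p$. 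The controls lie in $Z_{\text{ad}}$, which is bounded, closed and convex. So a further sub-sequence satisfies $\overline{g_{\de, s}} \rightharpoonup \overline{g} \in Z_{\text{ad}}$ weakly in $L^{p'}$. Lemma \ref{Lem: lscde->0}, together with weak lower semi-continuity of the weighted $L^{p'}$ term, yields $\liminf_{\de \rightarrow 0^+}\mathcal{F}_{\de, s}(\overline{u_{\de, s}}, \overline{g_{\de, s}}) \geq \mathcal{F}_{\loc}(\overline{u}, \overline{g})$.

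\textbf{Step 2: admissibility of the limit.} The lim-inf part of Theorem \ref{Thm: Gade->0: GaConv} gives $\liminf \mathcal{W}^{\de, s}_{\overline{g_{\de, s}}}(\overline{u_{\de, s}}) \geq \mathcal{W}^{\loc}_{\overline{g}}(\overline{u})$; here the linear term passes to the limit by weak--strong convergence. Equi-coercivity follows from \eqref{Eq: WBound} and Proposition \ref{Prop: NLPoincarede}. By \cite[Corollary 7.20]{dal2012introduction}, the minimum values $m_{\overline{g_{\de, s}}, \de, s}$ then converge to $m_{\overline{g}, \loc}$. Subtracting as in \eqref{Eq: QuasiMinimizersConvergenceOfMinss->1LimInfEq3} shows that $(\overline{u}, \overline{g}) \in \mathbb{T}^{\loc}_{\ep}$.

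Applying the $\Ga$-result with a varying control needs care. Each $\mathcal{W}^{\de, s}_{g_\de}$ differs from $\mathcal{W}^{\de, s}_{\overline{g}}$ only by $\lang g_\de - \overline{g}, u\rang$. On the equi-bounded minimizing families this term tends to $0$, because those families are $L^p$-precompact by Proposition \ref{Prop: CompactVaryde-0}. I will verify this explicitly.

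\textbf{Step 3: optimality via an admissible recovery sequence.} This is the main obstacle. Fix $(u, g) \in \mathbb{T}^{\loc}_{\ep}$, a minimizer $w$ of $\mathcal{W}^{\loc}_g$, and minimizers $v_\de$ of $\mathcal{W}^{\de, s}_g$ from Proposition \ref{Prop: QuasiConvexityEnergy}. Unlike the $s \rightarrow 1^-$ case, $u$ itself is not admissible: it need not vanish outside $\Om_{-\de}$. So I first take the recovery sequence $\tilde u_\de$ of Theorem \ref{Thm: Gade->0: GaConv} and require it to lie in $H^{s, p, \de}_0(\Om_{-\de}; \R^n)$ with $\mathcal{W}^{\de, s}_g(\tilde u_\de) \rightarrow \mathcal{W}^{\loc}_g(u)$. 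I would try to build it by density of $C^\infty_c(\Om)$ in $W^{1,p}_0(\Om)$ plus a diagonal argument, so that compact support lies in $\Om_{-\de}$ for small $\de$. For smooth compactly supported $\phi$ one has $D^s_\de \phi \rightarrow \grad \phi$ strongly, and the growth bound \eqref{Eq: WBound} controls the error. Because the cost $F$ obeys the same growth \eqref{Eq: CostBounds}, the same sequence also gives $\int F(x, \tilde u_\de, D^s_\de \tilde u_\de) \rightarrow \int F(x, u, \grad u)$. I consider this strong convergence of the recovery sequence in the cost essential: in Theorem \ref{Thm: QuasiMinimizersConvergenceOfMinss->1} it was tacitly used through the constant sequence, and a mere liminf would not close the argument.

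With $\tilde u_\de$ in hand I split into cases. In Case 1, where $\mathcal{W}^{\loc}_g(u) - \mathcal{W}^{\loc}_g(w) < \ep$, convergence of energies and of minima gives $(\tilde u_\de, g) \in \mathbb{T}^{\de, s}_{\ep}$ for small $\de$. In Case 2 (equality), I pick $\de_k \rightarrow 0$ satisfying the analogue of \eqref{Eq: QuasiMinimizersConvergenceOfMinss->1MinimizationEq4}. I then use the intermediate value theorem on $t \mapsto \mathcal{W}^{\de_k, s}_g(t\tilde u_{\de_k} + (1-t)v_{\de_k})$, whose continuity holds by \eqref{Eq: WBound}, to lower the energy by exactly $\ep/2^k$. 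The chain \eqref{Eq: grandInequalityChainEpsilon} then gives admissibility. I also need $t_k \rightarrow 1$ and convergence of the cost along these convex combinations. For this I choose $t_k$ as the largest such parameter and use the uniform bounds on $v_{\de_k}$, so that the cost is equicontinuous in $t$ uniformly in $k$.

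Finally, optimality of $(\overline{u_{\de, s}}, \overline{g_{\de, s}})$ gives $\mathcal{F}_{\de, s}(\overline{u_{\de, s}}, \overline{g_{\de, s}}) \leq \mathcal{F}_{\de, s}(u_\de, g)$. Passing to the limit with Step 1 on the left yields $\mathcal{F}_{\loc}(\overline{u}, \overline{g}) \leq \mathcal{F}_{\loc}(u, g)$. Taking $(u, g) = (\overline{u}, \overline{g})$ gives the lim-sup inequality, which closes \eqref{Eq: QuasiMinimizersConvergenceOfMinsDe->0}.
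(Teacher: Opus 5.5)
Your Steps 1--2 and Case 1 are fine. You also catch two points that the paper's ``analogous'' proof skips. First, when $\de\to0^+$ the state $u\in W^{1,p}_0(\Om;\R^n)$ does not lie in $H^{s,p,\de}_0(\Om_{-\de};\R^n)$, so you genuinely need a recovery sequence $\tilde u_\de$. Second, the right-hand side of the optimality inequality requires $\limsup_{\de\to0^+}\mathcal{F}_{\de,s}(u_\de,g)\le\mathcal{F}_{\loc}(u,g)$, which a lim-inf lemma cannot supply.

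\textbf{The gap is in Case 2, at the claim $t_k\to1$.} Write $T_k(t):=\mathcal{W}^{\de_k,s}_g(t\tilde u_{\de_k}+(1-t)v_{\de_k})$. Taking $t_k$ as the largest root of $T_k(1)-T_k(t)=\ep/2^k$ only picks the root nearest to $1$; it does not place that root near $1$. Equicontinuity in $t$ points the wrong way. From $|T_k(1)-T_k(t)|\le\omega(1-t)$ you only get $\ep/2^k\le\omega(1-t_k)$, which is a lower bound on $1-t_k$. Equicontinuity gives convergence of the cost only once $t_k\to1$ is already known.

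To force $1-t_k\to0$ you need a lower bound on the drop $T_k(1)-T_k(t)$ for $t$ near $1$, uniform in $k$. Quasiconvexity of $W$ in the gradient provides none. Nothing excludes $T_k(t)>T_k(1)-\ep/2^k$ on a fixed interval $[1-\eta,1]$ for all $k$. Then every root satisfies $t_k\le1-\eta$, and the convex combinations converge to $t^*u+(1-t^*)v^*\neq u$.

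The obstruction is not just about choosing $t_k$ more cleverly. Suppose $u$ is a strict local minimizer of $\mathcal{W}^{\loc}_g$ lying exactly at level $m_{g,\loc}+\ep$, which a nonconvex $z$-dependence of $W$ permits. Nothing in the hypotheses prevents the minimum of $\mathcal{W}^{\de,s}_g$ over a fixed $L^p$-neighbourhood of $u$ from staying above $m_{g,\de,s}+\ep$. In that case no admissible state with control $g$ is close to $u$ at all.

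Following the paper does not rescue this. Its own Case 2 asserts $t_{s(k)}\to1^-$ without justification. Its appeal to strong convergence of the recovery sequence says nothing about where the roots lie.

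The step does close under extra structure. If $W(x,\cdot,\cdot)$ is jointly convex, then $T_k$ is convex. Hence $T_k(1)-T_k(t)\ge(1-t)\bigl(T_k(1)-T_k(0)\bigr)\ge(1-t)\ep/2$ for $k$ large, and the largest root satisfies $1-t_k\le 2^{1-k}$.

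Alternatively, convexity gives $\mathcal{W}^{\loc}_g(tu+(1-t)w)-m_{g,\loc}\le t\ep<\ep$ for $t<1$. So Case 2 reduces to Case 1 applied to $tu+(1-t)w$, followed by a diagonal argument. This uses continuity of $\mathcal{F}_{\loc}$ along the segment in the strong $W^{1,p}$ topology.

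In the merely quasiconvex setting you need an assumption of this kind. For example, you could assume that every pair in $\mathbb{T}^{\loc}_{\ep}$ with gap exactly $\ep$ is a strong $W^{1,p}$-limit of pairs with the same control and gap strictly below $\ep$.

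A minor point: boundedness of $Z_{\text{ad}}$ is not part of Assumption \ref{Assump: Zad}. You can bound $\overline{g_{\de,s}}$ instead through the term $\int_\Om\La|g|^{p'}dx$ together with a uniform upper bound on the optimal costs.
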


 Finally, we provide a version of Corollary \ref{Cor: QuasiMinimizersConvergenceOfMinss->1Controls} when $\de \rightarrow 0^+$.

 \begin{corollary}[Strong convergence of controls as $\de \rightarrow 0^+$]\label{Cor: QuasiMinimizersConvergenceOfMinsde->0Controls}
    In the setting of Theorem \ref{Thm: QuasiMinimizersConvergenceOfMinsDe->0}, we additionally have that $\overline{g_{\de, s}} \rightarrow \overline{g}$ strongly in $L^r(\Om; \R^n)$ as $\de \rightarrow 0^+$ for any $r \in [1, \infty)$.
\end{corollary}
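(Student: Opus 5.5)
We mirror the proof of Corollary \ref{Cor: QuasiMinimizersConvergenceOfMinss->1Controls}, replacing each ingredient of the $s \rightarrow 1^-$ regime by its vanishing-horizon counterpart. We work along the non-relabeled sub-sequence supplied by Theorem \ref{Thm: QuasiMinimizersConvergenceOfMinsDe->0}. Along it we have $\overline{u_{\de, s}} \rightarrow \overline{u}$ strongly in $L^p(\Om; \R^n)$, $\overline{g_{\de, s}} \rightharpoonup \overline{g}$ weakly in $L^{p'}(\Om; \R^n)$, and the convergence of optimal values \eqref{Eq: QuasiMinimizersConvergenceOfMinsDe->0}. The aim is first to prove strong convergence in $L^{p'}(\Om; \R^n)$, and then to pass to other exponents $r$.

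\textbf{Step 1.} We split the cost into its state and control parts. By Lemma \ref{Lem: boundednessAdmissibleStatesNLQuaside->0} and the compactness result Proposition \ref{Prop: CompactVaryde-0}, we may also assume $D^s_{\de}\overline{u_{\de, s}} \rightharpoonup \grad \overline{u}$ weakly in $L^p(\Om; \R^{n \times n})$. Lemma \ref{Lem: lscde->0} then gives
\[
\liminf_{\de \rightarrow 0^+}\int_{\Om}F(x, \overline{u_{\de, s}}, D^s_{\de}\overline{u_{\de, s}})\,dx \ \geq \ \int_{\Om}F(x, \overline{u}, \grad \overline{u})\,dx .
\]
We subtract this from the limit identity \eqref{Eq: QuasiMinimizersConvergenceOfMinsDe->0}, using $\limsup(a_\de - b_\de) \leq \lim a_\de - \liminf b_\de$. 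This yields
\[
\limsup_{\de \rightarrow 0^+}\int_{\Om}\La|\overline{g_{\de, s}}|^{p'}\,dx \ \leq \ \int_{\Om}\La|\overline{g}|^{p'}\,dx .
\]
Weak lower semicontinuity of the weighted norm gives the reverse inequality for the $\liminf$. Hence the weighted norms converge.

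\textbf{Step 2.} The weight satisfies $\la \leq \La \leq \|\La\|_{L^\infty}$, so $\|g\|_{L^{p'}_{\La}} := (\int_\Om \La |g|^{p'})^{1/p'}$ is equivalent to the standard norm. It is also uniformly convex, being an $L^{p'}$ norm with respect to the measure $\La\,dx$ and $1<p'<\infty$. Weak convergence is unchanged under equivalent norms. The Radon--Riesz property then gives $\overline{g_{\de, s}} \rightarrow \overline{g}$ strongly in $L^{p'}(\Om; \R^n)$. Since $\Om$ is bounded, Hölder's inequality immediately gives strong convergence in $L^r(\Om; \R^n)$ for every $r \in [1, p']$.

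\textbf{Step 3 (main obstacle).} The remaining range is $r > p'$, which is possible only if the controls are uniformly bounded in $L^\infty$. This holds, for instance, under the box constraint of Example \ref{Rmk: box}. In that case we interpolate: for $p' < r < \infty$,
\[
\|h\|_{L^r} \leq \|h\|_{L^{p'}}^{p'/r}\,\|h\|_{L^\infty}^{1 - p'/r},
\]
applied to $h = \overline{g_{\de, s}} - \overline{g}$, whose $L^\infty$ norm is bounded by $2\max(\|\mathfrak{a}\|_{L^\infty}, \|\mathfrak{b}\|_{L^\infty})$. This is presumably what is meant by proceeding exactly as in \cite[Theorem 4.12]{cueto2026localization}, so we would invoke that argument verbatim. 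The points needing care are this $L^\infty$ bound and the verification that Lemma \ref{Lem: lscde->0} applies along the chosen sub-sequence; the latter requires the uniform $H^{s,p,\de}$ bound, which follows from Lemma \ref{Lem: boundednessAdmissibleStatesNLQuaside->0} together with Proposition \ref{Prop: NLPoincarede}.
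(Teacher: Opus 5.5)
Your argument is correct and is the same as the paper's. The paper states this corollary without a separate proof, as the vanishing-horizon analogue of Corollary \ref{Cor: QuasiMinimizersConvergenceOfMinss->1Controls}, and that argument is exactly yours: the lim-inf inequality for the state part plus convergence of the optimal values gives convergence of the weighted control norms, hence strong $L^{p'}$ convergence, with the other exponents deferred to \cite[Theorem 4.12]{cueto2026localization}.

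Your write-up is tighter than the paper's at three points. You correctly use a $\limsup$ where the paper writes $\liminf$, and you invoke uniform convexity (Radon--Riesz) rather than mere reflexivity. You also make explicit that $r>p'$ needs extra integrability of $Z_{\text{ad}}$, such as the box constraint, which Assumption \ref{Assump: Zad} alone does not supply. One small correction: uniform $L^\infty$ bounds are sufficient for this but not strictly necessary, so your ``only if'' slightly overstates it.
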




\section{Concluding Remarks}\label{Sec: Conclusion}

In summary, this paper provided a nonlocal optimal control formulation where the constraint involved approximating minimizers of a quasi-convex energy through the notion of $\ep$-quasi-minimizers. We proved existence results for this class of control problems, and demonstrated that this formulation allows for convergence of solutions in the vanishing nonlocal limit or in localization of the fractional parameter, for a general range of cost functionals. Our results are not particularly reliant on the function space framework chosen and the techniques used are readily applicable to other families of parameterized optimal control problems, particularly ill-posed problems with at least one solution.

 One option for exploration in this direction is to enforce admissible pairs of controls and states to be equilibrium points of the energy rather than just focus on minimizers. If the forcing terms have high-order nonlinearities then multiplicity of equilibrium points for any fixed force may be guaranteed, following the theory of \cite{torres2017existence, servadei2012mountain, servadei2013variational}; \cite{siktar2024superlinear} introduces some relevant techniques for these control problems, albeit in the fractional diffusion setting.

We also highlight the recent paper \cite{grekas2025convergence} for its theory and implementation of a Discontinuous Galerkin finite element method for approximating minimizers of quasi-convex energies. One could attempt to extend that theory to optimal control problems; this analysis is open even for PDE-based control problems.

\bibliographystyle{plain} 
\bibliography{refs} 

\end{document}